\documentclass[twoside]{article}

%
\usepackage[accepted]{aistats2022}
%




\usepackage{natbib}
\usepackage{graphicx}

\usepackage[english]{babel}
\usepackage[utf8]{inputenc}
\usepackage[T1]{fontenc}
\usepackage{ragged2e} 
\usepackage{stackengine}
\usepackage[margin=1in]{geometry}

\usepackage{amsmath,amssymb,amsthm,bm}
\usepackage{latexsym,color,minipage-marginpar,caption,multirow,verbatim}
\usepackage{times}
\usepackage{bbm}
\usepackage{algorithm}
\usepackage{algpseudocode}
\usepackage{amsmath, array, tikz}
\usepackage{enumitem}

\definecolor{darkblue}{rgb}{0.2, 0.2, 0.5}

\newtheorem{theorem}{Theorem}
\newtheorem{corollary}{Corollary}
\newtheorem{definition}{Definition}
\newtheorem{proposition}{Proposition}
\newtheorem{assumption}{Assumption}
\newtheorem{lemma}{Lemma}
\newtheorem{example}{Example}
\newtheorem*{remark}{Remark}

\usepackage{xcolor}

\definecolor{tumb}{RGB}{0,101,189}

\begin{document}

%
\runningtitle{Exponential Number of Spurious Solutions and Failure of Gradient Methods}

%
\runningauthor{Yalcin, Zhang, Lavei, Sojoudi}

\twocolumn[

\aistatstitle{Factorization Approach for Low-complexity Matrix Completion Problems: Exponential Number of Spurious Solutions and Failure of Gradient Methods}

\aistatsauthor{ Baturalp Yalcin \And Haixiang Zhang \And Javad Lavaei \And Somayeh Sojoudi }

\aistatsaddress{ UC Berkeley \And  UC Berkeley \And UC Berkeley \And UC Berkeley }]

\begin{abstract}
    It is well-known that the Burer-Monteiro (B-M) factorization approach can efficiently solve low-rank matrix optimization problems under the RIP condition. It is natural to ask whether B-M factorization-based methods can succeed on any low-rank matrix optimization problems with a low information-theoretic complexity, i.e., polynomial-time solvable problems that have a unique solution. In this work, we provide a negative answer to the above question. We investigate the landscape of B-M factorized polynomial-time solvable matrix completion (MC) problems, which are the most popular subclass of low-rank matrix optimization problems without the RIP condition. We construct an instance of polynomial-time solvable MC problems with exponentially many spurious local minima, which leads to the failure of most gradient-based methods. Based on those results, we define a new complexity metric that potentially measures the solvability of low-rank matrix optimization problems based on the B-M factorization approach. In addition, we show that more measurements of the ground truth matrix can deteriorate the landscape, which further reveals the unfavorable behavior of the B-M factorization on general low-rank matrix optimization problems.
\end{abstract}

\section{INTRODUCTION}

The low-rank matrix optimization problem aims to recover a low-rank ground truth matrix $\mathbf{M^*}$ through some measurements modeled as $\mathcal{A}({\bf M}^*)$, where the measurement operator $\mathcal{A}$ is a function from $\mathbb{R}^{n \times n}$ to $\mathbb{R}^d$.
The operator $\mathcal{A}$ can be either linear as in the linear matrix sensing problem and the matrix completion problem \citep{candes2009exact,recht2010guaranteed}, or nonlinear as in the one-bit matrix sensing problem \citep{davenport20141} and the phase retrieval problem \citep{shechtman2015phase}. There are two variants of the problem, known as symmetric and asymmetric  problems. The first one assumes that $\mathbf{M}^*$ is a positive semi-definite (PSD) matrix, whereas the second one makes no such assumption and allows $\mathbf{M}^*$ to be non-symmetric or sign indefinite. Since the asymmetric problem can be equivalently transformed into a symmetric problem \citep{zhang2021general}, we focus on the latter one.

There are in general two different approaches to overcome the non-convex low-rank constraint. The first approach is to design a convex penalty function that prefers low-rank matrices and then optimize the penalty function under the measurement constraint \citep{candes2009exact,recht2010guaranteed,candes2010power}. However, this approach works in the matrix space $\mathbb{R}^{n\times n}$ and has a high computational complexity. The other widely accepted technique is the Burer-Monteiro (B-M) factorization approach \citep{burer2003nonlinear}, which converts the original problem into an unconstrained one by replacing the original PSD matrix variable $\mathbf{M} \in \mathbb{R}^{n \times n}$ with the product of a low-dimensional variable $\mathbf{X} \in \mathbb{R}^{n \times r}$ and its transpose. The optimization problem based on the B-M factorization approach can be written as
\begin{align*}
    \min_{{\bf X}\in\mathbb{R}^{n\times r}}~ g\left[ \mathcal{A}({\bf XX}^T) - \mathcal{A}({\bf M}^*) \right],
\end{align*}
where $g(\cdot)$ is a loss function that penalizes the mismatch between $\mathbf{XX}^T$ and ${\bf M}^*$.
Using the B-M factorization, the objective function is generally non-convex even if the loss function $g(\cdot)$ is convex. Nonetheless, it has been proved that under certain strong conditions, such as the Restricted Isometry Property (RIP) condition, saddle-escaping methods can converge to the ground truth solution with a random initialization \citep{zhang2021general,bi2021local} and first-order methods with spectral initialization converge locally \citep{tu2016low,bhojanapalli2016dropping}; see \cite{chi2019nonconvex} for an overview. 

Then, it is natural to ask whether optimization methods based on the B-M factorization approach can succeed on general low-rank matrix optimization problems with a low information-theoretic complexity (i.e., problems that have a unique global solution and can be solved in polynomial time), especially when the RIP condition does not hold. In this work, we focus on a common class of problems without the RIP condition, namely the matrix completion (MC) problem. For the MC problem, the measurement operator $\mathcal{A}_\Omega:\mathbb{R}^{n\times n} \mapsto \mathbb{R}^{n\times n}$ is given by
\[ \mathcal{A}_\Omega({\bf M})_{ij} := \begin{cases} {\bf M}_{ij} & \text{if } (i,j) \in \Omega\\ 0 & \text{otherwise}, \end{cases} \]
where $\Omega$ is the set of indices of observed entries. We denote the measurement operator as ${\bf M}_\Omega := \mathcal{A}_{\Omega}({\bf M})$ for simplicity. An instance of the MC problem, denoted as $\mathcal{P}_{\mathbf{M}^*, \Omega, n, r}$, can be formulated as
\begin{align*}
    \mathrm{find}&\quad {\bf M}\in\mathbb{R}^{n\times n} \tag{$\mathcal{P}_{\mathbf{M}^*, \Omega, n, r}$} \\
    \mathrm{s.t.}&\quad \mathrm{rank}({\bf M}) \leq r,\quad{\bf M} \succeq 0, \quad {\bf M}_\Omega = {\bf M}^*_\Omega.
\end{align*}
%
%
If ${\bf M}^*$ is the only solution of this problem, we will say that $\mathcal{P}_{\mathbf{M}^*, \Omega, n, r}$ has a unique solution. Using the B-M factorization approach, the MC problem can be solved via the optimization problem
\begin{align}\label{eqn:obj-general}
    \min_{{\bf X} \in \mathbb{R}^{n\times r}} f({\bf X}),
\end{align}
where $f({\bf X}) := g[(\mathbf{XX}^T  - {\bf M}^*)_\Omega]$.
%
%
For example, if the $\ell_2$-loss function is used, the problem becomes
\begin{align}\label{eqn:obj-linear}
    \min_{{\bf X} \in \mathbb{R}^{n\times r}} \left\|(\mathbf{XX}^T - {\bf M}^*)_\Omega\right\|_F^2.
\end{align}
%
%
\paragraph{Contributions.} 
We provide a negative answer to the preceding question by constructing MC problem instances for which the \textit{optimization complexity} of local search methods using the B-M factorization does not align with the \textit{information-theoretic complexity} of the underlying MC problem instance. The information-theoretic complexity refers to the minimum number of operations that the best possible algorithm takes to find the ground truth matrix, while the optimization complexity refers to the minimum number of operations that a given optimization method takes to find the ground truth matrix. In general, the optimization complexity of local search methods depends on the properties of spurious solutions of the optimization problem, e.g., the number, the sharpness and the regions of attraction of spurious solutions. The optimization complexity predicts the performance of an algorithm and provides a hint on which algorithm to use for a given problem. Therefore, the results in this work imply that the popular B-M factorization approach is not able to capture the benign properties of the low-rank problem when the RIP condition does not hold. We summarize our contributions as follows:
\begin{enumerate}[label=\bf \roman*),noitemsep]
    \item Given natural numbers $n$ and $r$ with $n\geq 2r$, we construct a class of MC problem instances $\mathcal{L}(\mathcal{G}, n, r)$, whose ground matrix ${\bf M}^*\in\mathbb{S}_+^n$ has rank $r$. For every instance in this class, there exists a unique global solution and the solution can be found in polynomial time via graph-theoretic algorithms.
    \item Next, we show the existence of an instance in $\mathcal{L}(\mathcal{G}, n, r)$ whose B-M factorization formulation \eqref{eqn:obj-linear} has at least $\mathcal{O}(2^{n-2r})$ equivalent classes of spurious\footnote{A solution is called spurious if it is a local minimum but has a larger objective value than the optimal objective value.} local solutions. Note that this claim holds for general loss functions under a weak assumption.
    \item Moreover, for the rank-$1$ case, we prove that most gradient-based methods with a random initialization converge to a spurious local minimum with probability at least $1-\mathcal{O}(2^{-n/2})$. Numerical studies verify that the failure of the gradient-based methods also happens for general rank cases.
    \item We present an instance that has no spurious solution under the B-M factorization formulation \eqref{eqn:obj-linear}, but introducing additional observations of the ground truth matrix leads to at least exponentially many spurious solutions. This example further reveals the unfavorable behavior of the B-M factorization approach on general low-rank matrix optimization problems.
\end{enumerate}
%
Based on these results, we define a new complexity metric that potentially captures the optimization complexity of optimization methods based on the B-M factorization.

\paragraph{Related Work.} The low-rank optimization problem has been well studied under the RIP condition \citep{recht2010guaranteed}. Several recent works \citep{zhang2019sharp,bi2020global,zhang2021general} showed that the non-convex formulation has no spurious local minima with a small RIP parameter. To understand how conservative the RIP condition is, we consider a class of polynomial-time solvable problems without the RIP condition and study the behavior of optimization methods on this class. Specifically, we consider the polynomial-time solvable MC problems.
Most existing literature on the MC problem is based on the assumption that the measurement set is randomly constructed and the global solution is coherent \citep{candes2009exact,candes2010power,ge2016matrix,ge2017no,ma2019implicit,chen2020nonconvex}. In comparison, there is a small range of works that have focused on the deterministic MC problem \citep{bhojanapalli2014universal,kiraly2015algebraic,pimentel2016characterization,li2016recovery}. Furthermore, efficient graph-theoretic algorithms utilizing the special structures of a deterministic measurement set can be designed \citep{ma2018gradient}. Existing works on a deterministic measurement set case have focused on the completability problem and the convex relaxation approach, while the B-M factorization approach has not been analyzed. 
Moreover, there are several existing works that also provided negative results on the low-rank matrix optimization problem. The counterexamples in \cite{candes2010power,bhojanapalli2014universal} have non-unique global solutions, which make the recovery of the ground truth matrix impossible. The counterexamples in \cite{waldspurger2020rank} have a unique global solution but the objective function must be a linear function. We refer the reader to \cite{chi2019nonconvex} for a review of the low-rank matrix optimization problem. Our work is the first one in the literature that studies the optimization complexity in the case when the information-theoretic complexity is low.

\paragraph{Notations.} 
The set $[n]$ represents the set of integers from $1$ to $n$. We use lower case bold letters $\mathbf{x}$ to represent vectors and capital bold letters $\mathbf{X}$ to represent matrices. $\|\mathbf{X}\|$ and $\|\mathbf{X}\|_F$ are the $2$-norm and the Frobenius norm of the matrix $\mathbf{X}$, respectively. Let $\langle {\bf A}, {\bf B}\rangle = \mathrm{Tr}({\bf A}^T{\bf B})$ be the inner product between matrices. The notations ${\bf X}\succeq 0$ and ${\bf X}\succ 0$ mean that the matrix ${\bf X}$ is PSD and positive definite, respectively. The set of $n \times n$ PSD matrices is denoted as $\mathbb{S}_+^n$.
For a function $f: \mathbb{R}^{m \times n} \mapsto \mathbb{R}$, we denote the gradient and the Hessian as $\nabla f(\cdot)$ and $\nabla^2f(\cdot)$, respectively. The Hessian is a four-dimensional tensor with $[\nabla^2 f(\mathbf{X})]_{i,j,k,l} = \frac{\partial^2 f(\mathbf{X})}{\partial \mathbf{X}_{ij} \partial \mathbf{X}_{k,l}}$ for all $i,j \in [m]$ and $k,l \in [n]$. The quadratic form of the Hessian in the direction $\Delta \in \mathbb{R}^{m \times n} $ is defined as $\Delta: \nabla^2 f(\mathbf{X}): \Delta = \sum_{i,j,k,l}[\nabla^2 f(\mathbf{X})]_{i,j,k,l}\Delta_{ij}\Delta_{kl}$. We use $\lceil\cdot\rceil$ and $\lfloor\cdot\rfloor$ to denote the ceiling and flooring functions, respectively. The cardinality of a set $\mathcal{S}$ is shown as $|\mathcal{S}|$.

\section{EXPONENTIAL NUMBER OF SPURIOUS LOCAL MINIMA}
\label{sec:exp-sol}

In this section, we show that MC problem instances with a low information-theoretic complexity may have exponentially many spurious local minima if the B-M factorization is employed. We first construct a class of MC problem instances with a low information-theoretic complexity and then identify the problematic instances.

\subsection{Low-complexity Class of MC Problems}

Suppose that $r\geq 1$ and $n\geq 2r$ are two given integers. We construct a class of MC problem instances whose ground truth matrix ${\bf M}^*\in\mathbb{S}^{n}_+$ is rank-$r$. For every instance in this class, the global solution is unique and can be found in polynomial time in terms of $n$ and $r$. Let $m := \lfloor n / r\rfloor \geq 2$. We divide the first $mr$ rows and the first $mr$ columns of the matrix ${\bf M}^*$ into $m\times m$ block matrices, where each block has dimension $r\times r$. For every $i,j\in[m]$, we denote the block matrix at position $(i,j)$ as ${\bf M}^*_{i,j}$. 
%
%
We now define the block measurement patterns induced by a given graph.
%
\begin{definition}[Induced measurement set]\label{blocksparsitygraph}
    Let $\mathcal{G} = (\mathcal{G}_1,\mathcal{G}_2)= (\mathcal{V}, \mathcal{E}_1, \mathcal{E}_2)$ be a pair of undirected graphs with the node set $\mathcal{V} = [m]$ and the disjoint edge sets $\mathcal{E}_1,\mathcal{E}_2  \subset [m]\times[m]$, respectively. The induced measurement set $\Omega(\mathcal{G})$ is defined as follows: if $(i,j) \in \mathcal{E}_1$, then the entire block ${\bf M}^*_{i,j}$ are observed; if $(i,j) \in \mathcal{E}_2$, then all nondiagonal entries of the block ${\bf M}^*_{i,j}$ are observed; otherwise, none of the entries of the block is observed. In addition, the last $n-mr$ rows and the last $n-mr$ columns of the matrix ${\bf M}^*$ are fully observed. We refer to the graph $\mathcal{G}$ as the block sparsity graph.
\end{definition}
The following definition introduces a low-complexity class of MC problem instances. 
\begin{definition}[Low-complexity class of MC problems]
    Define $\mathcal{L}(\mathcal{G}, n, r)$ to be the class of low-complexity MC problems $\mathcal{P}_{{\bf M}^*, \Omega, n, r}$ with the following properties:
    \setlist{nolistsep}
    \begin{enumerate}[label= \bf \roman*), noitemsep]
        \item The ground truth $\mathbf{M^*} \in \mathbb{S}^{n}_+$ is rank-r.
        \item The matrix $\mathbf{M}^*_{i,j}\in\mathbb{R}^{r\times r}$ is rank-$r$ for all $i,j\in[m]$.
        \item The measurement set $\Omega=\Omega(\mathcal{G})$ is induced by $\mathcal{G}=(\mathcal{G}_1,\mathcal{G}_2)$, where $\mathcal{G}_1$ is connected and non-bipartite.
    \end{enumerate}
\end{definition}
%
The next proposition states that every MC problem instance in $\mathcal{L}(\mathcal{G}, n, r)$ is polynomial-time solvable.
\begin{proposition}\label{prop:rankrpolynomial}
    For an arbitrary instance $\mathcal{P}_{\mathbf{M}^*, \Omega, n, r}$ in $\mathcal{L}(\mathcal{G}, n, r)$, the ground truth $\mathbf{M^*}$ is the unique solution of this problem and can be found in $\mathcal{O}(n^2/r^2 + nr^2)$ time.
\end{proposition}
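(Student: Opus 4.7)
The plan is to prove uniqueness by showing that the factorization ambiguity in $\mathbf{M} = \mathbf{X}\mathbf{X}^T$ collapses to a single orthogonal gauge, pinned down by an odd cycle in $\mathcal{G}_1$, and then to convert the argument into a graph-theoretic reconstruction algorithm whose running time matches the claim.

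\textbf{Uniqueness.} Since $\mathbf{M}^* \succeq 0$ is rank $r$, I would write $\mathbf{M}^* = \mathbf{X}^*(\mathbf{X}^*)^T$ with $\mathbf{X}^* \in \mathbb{R}^{n \times r}$ and partition it into row blocks $\mathbf{X}^*_1, \ldots, \mathbf{X}^*_m \in \mathbb{R}^{r \times r}$ together with a tail $\mathbf{X}^*_\tau \in \mathbb{R}^{(n-mr)\times r}$. The assumption that each $\mathbf{M}^*_{i,j} = \mathbf{X}^*_i (\mathbf{X}^*_j)^T$ has rank $r$ forces every $\mathbf{X}^*_i$ to be invertible. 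For any other feasible $\mathbf{M} = \mathbf{X}\mathbf{X}^T$ (analogously partitioned), the observation on $(i,j) \in \mathcal{E}_1$ gives $\mathbf{X}_i \mathbf{X}_j^T = \mathbf{X}^*_i (\mathbf{X}^*_j)^T$ and, combined with the forced invertibility of $\mathbf{X}_i, \mathbf{X}_j$, yields $\mathbf{X}_i = \mathbf{X}^*_i \mathbf{Q}_i$ with $\mathbf{Q}_j = \mathbf{Q}_i^{-T}$. Walking around an odd cycle of $\mathcal{G}_1$ (which exists because $\mathcal{G}_1$ is non-bipartite) and closing the loop returns the constraint $\mathbf{Q}_{i_1} = \mathbf{Q}_{i_1}^{-T}$, so $\mathbf{Q}_{i_1}$ is orthogonal; connectedness of $\mathcal{G}_1$ then propagates $\mathbf{Q}_j \equiv \mathbf{Q}$ for all $j \in [m]$, and the fully observed last $n-mr$ rows pin down $\mathbf{X}_\tau = \mathbf{X}^*_\tau \mathbf{Q}$. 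Hence $\mathbf{X}\mathbf{X}^T = \mathbf{X}^* \mathbf{Q}\mathbf{Q}^T (\mathbf{X}^*)^T = \mathbf{M}^*$.

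\textbf{Algorithm.} I would turn this into a constructive procedure that outputs $\mathbf{X}^*$. First, run BFS on $\mathcal{G}_1$ to extract an odd cycle $i_1, i_2, \ldots, i_{2k+1}, i_1$ at cost $\mathcal{O}(m + |\mathcal{E}_1|) = \mathcal{O}(n^2/r^2)$. Next, compute the alternating cycle product
\[
\mathbf{P} := \mathbf{M}^*_{i_1,i_2}\, (\mathbf{M}^*_{i_2,i_3})^{-T}\, \mathbf{M}^*_{i_3,i_4}\, \cdots\, (\mathbf{M}^*_{i_{2k},i_{2k+1}})^{-T}\, \mathbf{M}^*_{i_{2k+1},i_1},
\]
which telescopes to $\mathbf{X}^*_{i_1}(\mathbf{X}^*_{i_1})^T$; take its Cholesky factor to fix $\mathbf{X}^*_{i_1}$. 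Finally, traverse a BFS spanning tree of $\mathcal{G}_1$ rooted at $i_1$, and at each tree edge $(i,j)$ set $\mathbf{X}^*_j = (\mathbf{M}^*_{i,j})^T (\mathbf{X}^*_i)^{-T}$; recover the tail rows from any fully observed mixed block by $\mathbf{X}^*_\tau = \mathbf{M}^*_{\tau,j} (\mathbf{X}^*_j)^{-T}$.

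\textbf{Main obstacle and accounting.} The delicate step is verifying that the cycle product telescopes to exactly $\mathbf{X}^*_{i_1}(\mathbf{X}^*_{i_1})^T$: each $\mathbf{M}^{-T}$ factor cancels the trailing $\mathbf{X}^T$ on its left and introduces a $\mathbf{X}^{-1}$ on its right, and odd length is precisely what guarantees that the closing edge $\mathbf{M}^*_{i_{2k+1},i_1}$ appears as $\mathbf{M}$ rather than $\mathbf{M}^{-T}$, thereby cancelling the trailing inverse and producing an outer product rather than the identity. For the runtime, the two BFS passes cost $\mathcal{O}(m^2) = \mathcal{O}(n^2/r^2)$, the cycle product and spanning-tree propagation each perform $\mathcal{O}(m)$ block operations of cost $\mathcal{O}(r^3)$, and the remaining Cholesky and tail solves cost $\mathcal{O}(r^3)$, summing to $\mathcal{O}(n^2/r^2 + mr^3) = \mathcal{O}(n^2/r^2 + nr^2)$.
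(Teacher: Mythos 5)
Your proposal is correct and follows essentially the same route as the paper: extract an odd cycle via BFS, form the alternating telescoping product to recover $\mathbf{X}^*_{i_1}(\mathbf{X}^*_{i_1})^T$, take a Cholesky factor, and propagate along a spanning tree, with the identical cost accounting $\mathcal{O}(n^2/r^2 + nr^2)$. Your explicit gauge argument for uniqueness (the odd cycle forcing $\mathbf{Q}_{i_1}=\mathbf{Q}_{i_1}^{-T}$, hence orthogonality) is a welcome elaboration of what the paper leaves implicit in the constructive recovery, but it is the same underlying mechanism rather than a different approach.
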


\subsection{Intuition for Rank-$1$ Case with $\ell_2$-loss Function}
We start with the case when the rank $r$ is equal to $1$ and the loss function $g(\cdot)$ is the $\ell_2$-loss. We study two instances in the class $\mathcal{L}(\mathcal{G}, n, 1)$ with $\mathcal{O}(n)$ and $\mathcal{O}(n^2)$ observations, respectively. The B-M formulation \eqref{eqn:obj-linear} of both instances contains exponentially many spurious local minima. Since the decomposition variable ${\bf X}$ is a column vector in the rank-$1$ case, we write it as ${\bf x}$.

\begin{example}\label{exm:1}
We first provide an instance with $\mathcal{O}(n)$ observations. Note that the number of blocks, namely $m$, is equal to $n$ in the rank-$1$ case. Let the graph $\mathcal{G}=(\mathcal{V},\mathcal{E}_1, \mathcal{E}_2)$ be chosen as $\mathcal{V} := [n]$ and
\[ \mathcal{E}_1 := \{ (i,j) ~|~ i,j\in[n],~ |i-j|\leq 1 \},\quad \mathcal{E}_2:=\emptyset. \]
The measurement set is the induced set $\Omega := \Omega(\mathcal{G})$. Namely, we observe the diagonal, sub-diagonal and super-diagonal entries of the ground truth matrix.
One can verify that the subgraph $\mathcal{G}_1 = (\mathcal{V}, \mathcal{E}_1)$ is connected and non-bipartite. 
Now, we construct a specific ground truth matrix. We define the vector ${\bf x}^*\in\mathbb{R}^n$ by
\begin{align*} 
{\bf x}^*_{2k+1} &:= 1,~ \forall k\in \left[ \lceil n/2 \rceil \right],\quad {\bf x}^*_{2k} := 0,~ \forall k\in \left[ \lfloor n/2 \rfloor \right],
\end{align*}
%
and let ${\bf M}^*:={\bf x}^*({\bf x}^*)^T$. For the B-M factorization formulation \eqref{eqn:obj-linear}, the set of global minima is given by
\begin{align*} 
\mathcal{X}^* := \big\{ {\bf x}\in\mathbb{R}^n~|~  {\bf x}_{2k+1}^2 &= 1,\quad \forall k\in \left[ \lceil n/2 \rceil \right],\\
{\bf x}_{2k} &= 0,\quad \forall k\in \left[ \lfloor n/2 \rfloor \right] \big\},
\end{align*}
which has cardinality $2^{\lceil \frac{n}{2} \rceil}$. For every global solution $\hat{{\bf x}} \in \mathcal{X}^*$ and every $\Delta \in \mathbb{R}^n\backslash\{0\}$, the Hessian satisfies
\begin{align*} 
\Delta : \nabla^2 f(\hat{{\bf x}}) : \Delta &= 2\left\| \left( \hat{{\bf x}}\Delta^T + \Delta \hat{{\bf x}}^T \right)_\Omega \right\|_F^2\\
&= 8\|\Delta\|^2  - \mathbbm{1}[n \; \text{is even}]4\Delta_n^2> 0,
\end{align*}
where $\mathbbm{1}[\cdot]$ is the indicator function. Therefore, the Hessian is positive definite at every global minimum. Then, we perturb the ground truth solution ${\bf M}^*$ to
\[ {\bf M}^*(\epsilon) := {\bf x}^*(\epsilon) \left[{\bf x}^*(\epsilon)\right]^T = ({\bf x}^* + \epsilon)({\bf x}^* + \epsilon)^T, \]
where ${\bf x}^*(\epsilon) := {\bf x}^* + \epsilon$ and $\epsilon \in \mathbb{R}^n$ is a small perturbation.
We denote the associated problem \eqref{eqn:obj-linear} as
\begin{equation}\label{eqn:prob2}
\setlength{\abovedisplayskip}{3pt}\setlength{\belowdisplayskip}{3pt}
\min_{\mathbf{x}\in\mathbb{R}^n} \tilde{f}(\mathbf{x}; \epsilon),
\end{equation}
where $\tilde{f}(\mathbf{x}; \epsilon) := \| (\mathbf{xx}^T - \mathbf{M}(\epsilon)^*)_{\Omega} \|_F^2$. For a generic perturbation $\epsilon$, all components of $\epsilon$ are nonzero and problem $\mathcal{P}_{{\bf M}^*(\epsilon), \Omega,n,1}$ belongs to the class $\mathcal{L}(\mathcal{G}, n, 1)$. This implies that the global solution of problem \eqref{eqn:prob2} is unique up to a sign flip.

We analyze the relation between the local minima of the original problem and those of the perturbed problem. Consider the equation $\nabla_{\bf x} \tilde{f}({\bf x};\epsilon) = 0$ near an unperturbed global minimum $\hat{{\bf x}}\in\mathcal{X}^*$. Since $(\hat{{\bf x}}; 0)$ is a solution to the gradient equation and the Jacobian matrix with respect to ${\bf x}$ is equal to the positive definite Hessian $\nabla^2 f(\hat{{\bf x}})$, the Implicit Function Theorem (IFT) states that there exists a unique solution $\hat{{\bf x}}(\epsilon)$ in a neighbourhood of $\hat{{\bf x}}$ for all values of $\epsilon$ with a small norm. In addition, the continuity of Hessian implies that $\nabla_{\bf xx}\tilde{f}(\hat{\mathbf{x}}^*(\epsilon); \epsilon) \succ 0$. 
%
%
Thus, $\hat{\bf x}(\epsilon)$ is a local minimum of the perturbed problem \eqref{eqn:prob2}. As a result, we have proved the existence of a local minimum for the perturbed problem corresponding to each of the $2^{\lceil n/2\rceil}$ global minima of the unperturbed problem. Hence, the problem \eqref{eqn:prob2} has at least $2^{\lceil n/2 \rceil}$ local minima, while only two of them are global minima. In summary, we have constructed an instance in $\mathcal{L}(\mathcal{G}, n, 1)$ that has exponentially many spurious local solutions.
%
\end{example}

\begin{example}\label{exm:2}
Next, we construct an MC problem instance with exponentially many spurious local minima and $\mathcal{O}(n^2)$ observations. We choose the same ground truth matrix ${\bf M}^*(\epsilon)$ as in the last example,
 but assume that the measurement set $\Omega$ is induced by the graph $\mathcal{G}=(\mathcal{V},\mathcal{E}_1, \mathcal{E}_2)$ with $\mathcal{V} := [n]$, $\mathcal{E}_2 = \emptyset$ and
\begin{align*}
   & \mathcal{E}_1 := \left\{ (i,i),(i,2k),(2k,i)~|~\forall i\in[n],~ k\in\left[ \lfloor n/2 \rfloor \right] \right\}.
\end{align*}
%
Since the subgraph $\mathcal{G}_1 = (\mathcal{V}, \mathcal{E}_1)$ is connected and non-bipartite, the perturbed problem $\mathcal{P}_{{\bf M}^*(\epsilon), \Omega,n,1}$ belongs to the class $\mathcal{L}(\mathcal{G}, n, 1)$. Moreover, one can verify that the set of global minima of this problem is still $\mathcal{X}^*$ and the Hessian at every global solution is positive definite.
By the same argument, IFT implies that problem \eqref{eqn:prob2} has at least $2^{\lceil n/2 \rceil} -2$ spurious local minima for a generic and small perturbation $\epsilon$. 
%
\end{example}
%
Note that the instances analyzed in this section, as well as those in the remainder of this paper, satisfy the incoherence condition \citep{candes2009exact} with the parameter $\mu=\mathcal{O}(1)$.
The results in this subsection will be formalized with a unified framework next.

\subsection{Rank-$1$ Case with General Measurement Sets}
\label{sec:general-pattern}


In this subsection, we estimate the largest lower bound on the number of spurious local minima for the given parameters $n$ and $r$. We address the problem by first finding a lower bound on the number of spurious local minima given a general measurement set $\Omega$, and then maximizing the lower bound over $\Omega$. 
The following theorem utilizes the topology of $\mathcal{G}$ to quantify a lower bound on the number of spurious solutions for the measurement set $\Omega(\mathcal{G})$. 
%
\begin{theorem}\label{thm:spuriousmany}
Let $\mathcal{G}=(\mathcal{V}, \mathcal{E}_1, \emptyset)$ such that $\mathcal{G}_1=(\mathcal{V}, \mathcal{E}_1)$ is connected and non-bipartite with $n$ vertices. Assume that there exists a maximal independent set\footnote{For a graph $\mathcal{G}=(\mathcal{V},\mathcal{E})$, the set $\mathcal{S}\subset\mathcal{V}$ is called an independent set if no two nodes in $\mathcal{S}$ are adjacent. The set $\mathcal{S}$ is called a maximal independent set if it is an independent set and is not a strict subset of another independent set.} $\mathcal{S}(\mathcal{G}_1)$ of $\mathcal{G}_1$ such that every vertex in the set has a self-loop. There exists an instance in $\mathcal{L}(\mathcal{G}, n, 1)$ for which the problem \eqref{eqn:obj-linear} has at least $2^{|\mathcal{S}(\mathcal{G}_1)|}-2$ spurious local minima.
\end{theorem}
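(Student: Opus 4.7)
The plan is to mimic Examples \ref{exm:1}--\ref{exm:2} in a graph-theoretic form. I would build a degenerate unperturbed ground truth $\mathbf{M}^*$ whose B-M problem \eqref{eqn:obj-linear} has $2^{|\mathcal{S}(\mathcal{G}_1)|}$ global minima, each with a strictly positive-definite Hessian, then apply the Implicit Function Theorem (IFT) after a small generic perturbation so that the resulting instance lies in $\mathcal{L}(\mathcal{G},n,1)$ while all $2^{|\mathcal{S}(\mathcal{G}_1)|}$ critical points survive as strict local minima. By Proposition \ref{prop:rankrpolynomial}, only two of them can be global, leaving the rest spurious.

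Concretely, set ${\bf x}^*_i := 1$ for $i \in \mathcal{S}(\mathcal{G}_1)$ and ${\bf x}^*_i := 0$ otherwise, and take $\mathbf{M}^* := {\bf x}^*({\bf x}^*)^T$. The set of global minima of the unperturbed problem is
\[ \mathcal{X}^* = \{{\bf x} \in \mathbb{R}^n :\ {\bf x}_i^2 = 1\ \forall i \in \mathcal{S}(\mathcal{G}_1),\ {\bf x}_j = 0\ \forall j \notin \mathcal{S}(\mathcal{G}_1)\}, \]
which has cardinality $2^{|\mathcal{S}(\mathcal{G}_1)|}$. This is because the self-loop at each $i \in \mathcal{S}(\mathcal{G}_1)$ forces ${\bf x}_i = \pm 1$; the independence of $\mathcal{S}(\mathcal{G}_1)$ means no observation couples two vertices of $\mathcal{S}(\mathcal{G}_1)$; and maximality provides, for each $j \notin \mathcal{S}(\mathcal{G}_1)$, a neighbour $i \in \mathcal{S}(\mathcal{G}_1)$ whose observation ${\bf x}_i {\bf x}_j = 0$ pins ${\bf x}_j = 0$.

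For the Hessian at $\hat{\bf x} \in \mathcal{X}^*$, I would apply the identity $\Delta : \nabla^2 f(\hat{\bf x}) : \Delta = 2\|(\hat{\bf x}\Delta^T + \Delta \hat{\bf x}^T)_\Omega\|_F^2$ from Example \ref{exm:1} and split observed pairs by vertex type: self-loops at $i \in \mathcal{S}(\mathcal{G}_1)$ contribute $8\Delta_i^2$ each, cross-edges between $\mathcal{S}(\mathcal{G}_1)$ and its complement contribute $4\Delta_j^2$ each, and edges internal to $\mathcal{S}(\mathcal{G}_1)$ or to its complement contribute zero (the former being absent by independence, the latter vanishing because $\hat{\bf x}$ is zero on the complement). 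Maximality gives every $j \notin \mathcal{S}(\mathcal{G}_1)$ at least one neighbour in $\mathcal{S}(\mathcal{G}_1)$, hence $\Delta : \nabla^2 f(\hat{\bf x}) : \Delta \ge 4\|\Delta\|^2$. Now pick a generic $\epsilon \in \mathbb{R}^n$ with no zero coordinate and small norm and set $\mathbf{M}^*(\epsilon) := ({\bf x}^* + \epsilon)({\bf x}^* + \epsilon)^T$; since ${\bf x}^* + \epsilon$ has no zero entry, $\mathcal{P}_{\mathbf{M}^*(\epsilon),\Omega,n,1}$ lies in $\mathcal{L}(\mathcal{G},n,1)$. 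Applying IFT to $\nabla_{\bf x}\tilde f({\bf x};\epsilon) = 0$ at each $\hat{\bf x} \in \mathcal{X}^*$ produces a unique nearby critical point $\hat{\bf x}(\epsilon)$, still a strict local minimum by continuity of the Hessian. For $\|\epsilon\|$ small enough, the $2^{|\mathcal{S}(\mathcal{G}_1)|}$ lifted points stay pairwise distinct, and discarding the two factorizations $\pm({\bf x}^* + \epsilon)$ of the unique global solution leaves at least $2^{|\mathcal{S}(\mathcal{G}_1)|} - 2$ spurious local minima.

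The main obstacle is the Hessian step: ensuring that the quadratic form $\sum_{(i,j) \in \Omega}(\hat{\bf x}_i \Delta_j + \Delta_i \hat{\bf x}_j)^2$ decouples into independent coordinate squares instead of admitting flat or negative directions. This relies simultaneously on both structural hypotheses — independence of $\mathcal{S}(\mathcal{G}_1)$ eliminates the dangerous $\mathcal{S}$--$\mathcal{S}$ couplings, and a self-loop at every $i \in \mathcal{S}(\mathcal{G}_1)$ rigidly anchors $\hat{\bf x}_i = \pm 1$. Once strict positive-definiteness is in place, IFT and the uniqueness from Proposition \ref{prop:rankrpolynomial} make the counting step immediate.
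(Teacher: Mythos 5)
Your proposal is correct and follows essentially the same route as the paper's proof: the same degenerate ground truth supported on $\mathcal{S}(\mathcal{G}_1)$ (the paper allows arbitrary nonzero values $c_i$ where you fix $c_i=1$), the same edge-type decomposition of the Hessian quadratic form using independence, maximality, and the self-loops to get positive definiteness, and the same generic-perturbation-plus-IFT argument to lift all $2^{|\mathcal{S}(\mathcal{G}_1)|}$ strict minima, of which only the two sign-related ones remain global.
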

In both Examples \ref{exm:1} and \ref{exm:2}, a maximal independent set is $\mathcal{S}=\{2k+1~|~ \forall k\in[\lceil n/2 \rceil]\} ~$\footnote{$\mathcal{S}$ is shorthand notation for $\mathcal{S}(\mathcal{G}_1)$.}. Hence, Theorem \ref{thm:spuriousmany} implies that there are $2^{\lceil n/2 \rceil} -2$ spurious local minima, which is consistent with our analysis.
Since a maximal independent set of a connected and non-bipartite graph can have up to $n-1$ vertices, the number of spurious local minima can be as large as $2^{n-1}-2$. 
\begin{corollary}\label{cor:singlemissing}
There exist a graph $\mathcal{G}$ and an instance in $\mathcal{L}(\mathcal{G}, n, 1)$ such that problem \eqref{eqn:obj-linear} has $2^{n-1}-2$ spurious solutions. In addition, there exist a graph $\mathcal{G}$ and an instance in $\mathcal{L}(\mathcal{G}, n, 1)$ with $|\Omega| = n^2 - 2$ such that the problem \eqref{eqn:obj-linear} has spurious solutions.
\end{corollary}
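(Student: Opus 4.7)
The plan is to derive both halves of Corollary~\ref{cor:singlemissing} as direct instantiations of Theorem~\ref{thm:spuriousmany}, by choosing block sparsity graphs whose qualifying maximal independent sets are (i) as large as possible in the first claim, and (ii) compatible with observing almost all entries of $\mathbf{M}^*$ in the second claim.

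For the first claim, I would take $\mathcal{V} = [n]$, $\mathcal{E}_2 = \emptyset$, and let $\mathcal{E}_1$ contain the star edges $(n,i),(i,n)$ for every $i\in[n-1]$ together with the self-loops $(i,i)$ for every $i\in[n-1]$. The resulting $\mathcal{G}_1$ is connected through the hub $n$ and is non-bipartite because any self-loop is an odd cycle of length one. The set $\mathcal{S} = [n-1]$ is independent, since no two leaves of a star are adjacent in $\mathcal{G}_1$; it is maximal, because the only outside vertex $n$ is adjacent to every element of $\mathcal{S}$; and each of its elements carries a self-loop by construction. Theorem~\ref{thm:spuriousmany} then produces an instance in $\mathcal{L}(\mathcal{G}, n, 1)$ whose problem~\eqref{eqn:obj-linear} has at least $2^{|\mathcal{S}|} - 2 = 2^{n-1} - 2$ spurious local minima.

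For the second claim, I would take $\mathcal{V} = [n]$, $\mathcal{E}_2 = \emptyset$, and let $\mathcal{E}_1$ consist of every ordered pair in $[n]\times[n]$ except $(1,2)$ and $(2,1)$. Equivalently, $\mathcal{G}_1$ is the complete graph on $[n]$ augmented by a self-loop at every vertex, with the single undirected edge $\{1,2\}$ deleted. This $\mathcal{G}_1$ is plainly connected and non-bipartite, and $\mathcal{S} = \{1,2\}$ is a maximal independent set whose two vertices both carry self-loops, since the two elements of $\mathcal{S}$ are not adjacent and every other vertex is adjacent to both $1$ and $2$. Invoking Theorem~\ref{thm:spuriousmany} yields an instance with at least $2^2 - 2 = 2$ spurious solutions. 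Finally, $|\Omega| = n + 2\bigl(\binom{n}{2} - 1\bigr) = n^2 - 2$, because the $n$ self-loops contribute one diagonal entry each and the $\binom{n}{2} - 1$ remaining undirected edges each contribute a symmetric pair.

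The main obstacle is little more than bookkeeping: checking connectivity, non-bipartiteness, independence, and maximality for these two simple graphs, and doing the observation count for the second one; all of the technical work has already been absorbed into Theorem~\ref{thm:spuriousmany}. The only genuinely structural point is that a connected graph on $n\geq 2$ vertices cannot host an independent set of size $n$, so the $2^{n-1}$ scaling in the first claim is already the ceiling attainable within the framework of Theorem~\ref{thm:spuriousmany}, and the star-plus-self-loops construction above saturates it.
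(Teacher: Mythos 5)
Your proposal is correct and follows essentially the same route as the paper: both claims are obtained by instantiating Theorem \ref{thm:spuriousmany}, with the star graph $K_{1,n-1}$ plus self-loops on the $n-1$ leaves for the first claim, and the fully observed pattern minus the single symmetric pair $\{(1,2),(2,1)\}$ (giving $\mathcal{S}=\{1,2\}$ and $|\Omega|=n^2-2$) for the second. The verifications of connectivity, non-bipartiteness, maximality, and the entry count match the paper's argument.
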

Corollary \ref{cor:singlemissing} implies that the B-M factorization may not be an efficient approach to the MC problem, since it has a spurious solution even in the highly ideal case when almost all entries of the matrix are measured. Generally, the proof of Theorem \ref{thm:spuriousmany} implies that, as a necessary condition for not having a spurious solution in formulation \eqref{eqn:obj-linear}, the elements of ${\bf x}^*$ associated with the nodes outside of the maximal independent set $\mathcal{S}$ should not be much smaller than those associated with the nodes in $\mathcal{S}$. 
\begin{corollary}\label{cor:function}
Under the setting of Theorem \ref{thm:spuriousmany}, there exists a function $h_{\mathcal{S}}(\cdot):(0,\infty)\mapsto(0,\infty)$ such that $\mathcal{P}_{{\bf x}^*({\bf x}^*)^T, \Omega(\mathcal{G}), n, 1}$ has at least $2^{|\mathcal{S}|}-2$ spurious local minima in formulation \eqref{eqn:obj-linear} for every generic ${\bf x}^*\in\mathbb{R}^n$ satisfying 
\[ \setlength{\abovedisplayskip}{3pt}\setlength{\belowdisplayskip}{3pt}\|\mathbf{x}^*_{\mathcal{S}^c}\| \leq h_{\mathcal{S}}( {\min}_{i \in \mathcal{S}} |x^*_i|) \cdot \|{\bf x}^*_\mathcal{S}\|, \]
where $\mathcal{S}^c:=[n]\backslash\mathcal{S}$ and ${\bf x}_{\mathcal{S}^c}:=(x_i : i\notin \mathcal{S})$.
\end{corollary}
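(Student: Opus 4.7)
The plan is to extract a quantitative version of the proof of Theorem~\ref{thm:spuriousmany}. Starting from a vector $\mathbf{x}^\star \in \mathbb{R}^n$ that satisfies the hypothesis of the corollary, I would decompose it as $\mathbf{x}^\star = \mathbf{y}^\star + \mathbf{z}^\star$, where $\mathbf{y}^\star$ is supported on $\mathcal{S}$ (agreeing with $\mathbf{x}^\star$ there) and $\mathbf{z}^\star$ is supported on $\mathcal{S}^c$. The idea is to treat $\mathbf{z}^\star$ as a perturbation of the ``base'' ground truth $\mathbf{y}^\star(\mathbf{y}^\star)^T$ and reproduce the IFT argument of Theorem~\ref{thm:spuriousmany} while keeping track of the admissible perturbation size.

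First I would analyze the unperturbed problem $\mathcal{P}_{\mathbf{y}^\star (\mathbf{y}^\star)^T, \Omega(\mathcal{G}), n, 1}$. Because $\mathcal{S}$ is an independent set and every node of $\mathcal{S}$ carries a self-loop, for any sign vector $s \in \{-1,+1\}^{|\mathcal{S}|}$ the vector $\hat{\mathbf{y}}(s)$ with $\hat{y}_i(s) = s_i |x^\star_i|$ for $i \in \mathcal{S}$ and $\hat{y}_i(s) = 0$ for $i \in \mathcal{S}^c$ produces the correct measurements and hence is a global minimum of $f$; this yields $2^{|\mathcal{S}|}$ distinct global minima. The same Hessian computation used in Example~\ref{exm:1}, namely $\Delta : \nabla^2 f(\hat{\mathbf{y}}) : \Delta = 2 \| (\hat{\mathbf{y}} \Delta^T + \Delta \hat{\mathbf{y}}^T)_\Omega \|_F^2$, can then be leveraged to exhibit an explicit lower bound $\lambda_{\min}(\nabla^2 f(\hat{\mathbf{y}}(s))) \geq c(\mathcal{G})\cdot (\min_{i \in \mathcal{S}}|x^\star_i|)^2$, where the constant $c(\mathcal{G})>0$ depends only on the graph and arises from connectedness together with the self-loops on~$\mathcal{S}$ (these ensure that every direction $\Delta$ either activates some diagonal self-loop at a node of~$\mathcal{S}$ or is coupled across an edge of $\mathcal{E}_1$ to one).

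Next I would invoke a quantitative Implicit Function Theorem around each $\hat{\mathbf{y}}(s)$. Let $\tilde{f}(\mathbf{x}; \mathbf{z}^\star) = \|(\mathbf{x}\mathbf{x}^T - (\mathbf{y}^\star+\mathbf{z}^\star)(\mathbf{y}^\star+\mathbf{z}^\star)^T)_{\Omega}\|_F^2$. Since $\nabla_{\mathbf{x}} \tilde{f}$ is polynomial in $(\mathbf{x},\mathbf{z}^\star)$, one can bound the mixed derivative $\|\nabla_{\mathbf{x}\mathbf{z}^\star}\tilde{f}(\hat{\mathbf{y}}(s); 0)\|$ in terms of $\|\mathbf{y}^\star\|$, and bound the Lipschitz constant of $\nabla_{\mathbf{x}\mathbf{x}}\tilde{f}$ on a ball of radius comparable to $\min_{i\in\mathcal{S}}|x^\star_i|$ around $\hat{\mathbf{y}}(s)$ by a polynomial in $\|\mathbf{y}^\star\|$. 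A standard Newton-Kantorovich / contraction argument then yields a threshold $\tau(\min_{i\in\mathcal{S}}|x^\star_i|, \|\mathbf{y}^\star\|)$ such that whenever $\|\mathbf{z}^\star\| \leq \tau \cdot \|\mathbf{y}^\star\|$, the gradient equation has a unique root $\hat{\mathbf{x}}(s)$ near each $\hat{\mathbf{y}}(s)$ with Hessian still positive definite. Setting $h_{\mathcal{S}}(t)$ to a value compatible with this threshold (and using the hypothesis to absorb the dependence on $\|\mathbf{y}^\star\|$) produces the required function. Genericity of $\mathbf{x}^\star$ then places the problem into $\mathcal{L}(\mathcal{G}, n, 1)$, so Proposition~\ref{prop:rankrpolynomial} gives a unique global minimum up to sign, and at least $2^{|\mathcal{S}|} - 2$ of the $2^{|\mathcal{S}|}$ perturbed critical points are spurious.

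The main obstacle will be obtaining the explicit eigenvalue lower bound $\lambda_{\min}(\nabla^2 f(\hat{\mathbf{y}}(s))) \geq c(\mathcal{G})\cdot(\min_{i\in\mathcal{S}}|x^\star_i|)^2$ from the graph topology in a form that cleanly decouples the dependence on $\min_{i\in\mathcal{S}}|x^\star_i|$ from the dependence on $\|\mathbf{x}^\star_{\mathcal{S}}\|$, since the quadratic form mixes both scales through $\hat{\mathbf{y}}$. Once that separation is achieved, the remainder reduces to standard quantitative IFT estimates, and the function $h_{\mathcal{S}}$ can be written down explicitly in terms of $c(\mathcal{G})$ and the relevant polynomial Lipschitz constants.
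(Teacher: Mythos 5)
Your overall strategy---decompose ${\bf x}^*$ into its $\mathcal{S}$-part and $\mathcal{S}^c$-part, treat the latter as a perturbation of the support-restricted ground truth, and re-run the IFT argument of Theorem \ref{thm:spuriousmany} quantitatively---is the same skeleton as the paper's proof, but you propose to establish uniformity of the admissible perturbation radius via explicit Newton--Kantorovich estimates, whereas the paper does it softly: it notes that the MC problem is scale-free (so that $r_{c{\bf x}^0}=c\,r_{{\bf x}^0}$, reducing to unit-norm base points), and then covers the compact sets $\{\mathbf{x}:\min_{i\in\mathcal{S}}|x_i|\ge 1/k,\ \|\mathbf{x}\|=1\}$ by finitely many neighborhoods on which Theorem \ref{thm:spuriousmany}'s radius is locally uniform. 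Your quantitative route is viable and would even produce an explicit $h_{\mathcal{S}}$, which the compactness argument does not. Also, the ``main obstacle'' you flag is not one: at each unperturbed global minimum $\hat{\bf y}(s)$ the Hessian quadratic form computed in the proof of Theorem \ref{thm:spuriousmany} is \emph{diagonal}, equal to $\sum_{i\in\mathcal{S}}4\hat{x}_i^2\Delta_i^2+\sum_{j\notin\mathcal{S}}\bigl(\sum_{i\in\mathcal{S},(i,j)\in\mathcal{E}_1}4\hat{x}_i^2\bigr)\Delta_j^2$, so maximality of $\mathcal{S}$ immediately gives $\lambda_{\min}\ge 4(\min_{i\in\mathcal{S}}|x_i^*|)^2$ with no graph-dependent constant needed.

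The one step that would fail as written is ``using the hypothesis to absorb the dependence on $\|\mathbf{y}^\star\|$.'' Your Newton--Kantorovich threshold $\tau$ necessarily degrades as $\|\mathbf{y}^\star\|$ grows (the Lipschitz constants of $\nabla_{\mathbf{xx}}\tilde f$ and the mixed derivative scale up with $\|\mathbf{y}^\star\|$), and for a fixed value of $\min_{i\in\mathcal{S}}|x_i^*|$ the norm $\|\mathbf{y}^\star\|$ is unbounded, so $\inf_{\|\mathbf{y}^\star\|}\tau=0$ and no nonzero $h_{\mathcal{S}}$ depending only on $\min_{i\in\mathcal{S}}|x_i^*|$ can be extracted by taking an infimum; the hypothesis bounds $\|\mathbf{z}^\star\|$ relative to $\|\mathbf{y}^\star\|$ but gives no control on $\|\mathbf{y}^\star\|$ itself. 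The missing ingredient is exactly the scale-invariance the paper invokes: normalizing $\|\mathbf{y}^\star\|=1$ first (or, equivalently, checking that your explicit threshold $\tau\cdot\|\mathbf{y}^\star\|$ is homogeneous of degree one in the overall scale, which it is) turns the threshold into a function of the ratio $\min_{i\in\mathcal{S}}|x_i^*|/\|\mathbf{x}^*_{\mathcal{S}}\|$ alone, matching the statement in the same (slightly abusive) sense the paper uses when it writes the argument of $h_{\mathcal{S}}$ as $\min_{i\in\mathcal{S}}|x_i^*|$. With that reduction inserted, your argument goes through.
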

Because the maximal independent set of a graph $\mathcal{G}$ is not necessarily unique, the set of functions $h_{\mathcal{S},M}(\cdot)$ over all maximal independent sets $\mathcal{S}$ designates a necessary condition for the nonexistence of spurious local minima given a measurement set $\Omega(\mathcal{G})$.

\subsection{Extension to General Rank-$r$ Case}
\label{sec:rankr}

We generalize the results to the case when the ground truth matrix has a general rank. \cite{eisenberg2013complexity} showed that the rank-$r$ MC problem is $\mathcal{NP}$-hard in the worst case for every $r \ge 2$. However, we focus on instances in the low-complexity class $\mathcal{L}(\mathcal{G}, n, r)$ and show that there are instances in this class whose B-M factorization formulation \eqref{eqn:obj-linear} has a highly undesirable landscape. We cannot simply extend the proof of the rank-$1$ case to the rank-$r$ case since there exist an infinite number of matrices ${\bf X}^*$ such that ${\bf M}^*={\bf X}^*({\bf X}^*)^T$ when $r >1$. The global optimality of a solution ${\bf X}^*$ is not lost under any orthogonal transformation.
This implies that the Hessian at the global solutions of problem \eqref{eqn:obj-linear} cannot be positive definite, which fails the applicability of IFT. Therefore, we consider the quotient manifold $\mathbb{R}^{n\times r}/O_r$, where $O_r$ is the lie group of $r\times r$ orthogonal matrices. To simplify the analysis, we instead consider the following lower-diagonal subspace
\begin{equation*}
\setlength{\abovedisplayskip}{3pt}\setlength{\belowdisplayskip}{3pt}
\begin{aligned}
    \mathcal{W}^{n \times r} := \{ \mathbf{X} \in \mathbb{R}^{n \times r}~|~ \mathbf{X}_{ij} = 0,~ \forall i\in[n]&,~ j\in[r]\\
    &\mathrm{s.t.}\quad i<j \}.
\end{aligned}
\end{equation*}
%
%
We define an embedding of the manifold $\mathbb{R}^{n\times r}/O_r$ into $\mathcal{W}^{n \times r}$ and composite it with the quotient map.
\begin{definition}[Restriction map] \label{def:orthrest}
Given a matrix ${\bf X} \in \mathbb{R}^{n\times r}$, we define the embedding $\phi^{emb}([{\bf X}]) := {\bf R}\in\mathcal{W}^{n\times r}$, where ${\bf X} = {\bf RQ}$ is the RQ decomposition with $\mathbf{Q}$ being an orthogonal matrix and ${\bf R}$ having non-negative diagonal elements.
The restriction map is defined as $\phi({\bf X}) := \phi^{emb}([{\bf X}])$. 
\end{definition}
%
When the RQ decomposition is not unique, we choose an arbitrary decomposition for the embedding $\phi^{emb}(\cdot)$. However, the properties of the RQ decomposition ensure that $\phi^{emb}(\cdot)$ is a bijection in a small neighborhood of each matrix ${\bf X}$ whose first $r$ rows are linearly independent. 
%
Consider the restricted version of problem \eqref{eqn:obj-linear}:
\begin{equation}\label{eqn:probres}
\setlength{\abovedisplayskip}{3pt}\setlength{\belowdisplayskip}{3pt}
\min_{\mathbf{X} \in \mathcal{W}^{n \times r} } f(\mathbf{X}),
\end{equation}
%
Results in Section \ref{sec:general-pattern} can be extended to the problem \eqref{eqn:probres}, and then be translated back to the problem \eqref{eqn:obj-linear}.
\begin{lemma}\label{lem:rankr-1}
Consider a graph $\mathcal{G} = (\mathcal{V}, \mathcal{E}_1, \mathcal{E}_2)$ with $m=\lfloor n/r\rfloor$ vertices for which the subgraph $\mathcal{G}_1 = (\mathcal{V}, \mathcal{E}_1)$ is connected and non-bipartite. Assume that there exists a maximal independent set $\mathcal{S}(\mathcal{G}_1)$ of $\mathcal{G}_1$ whose vertices each have a self-loop. If the induced subgraph\footnote{See \cite{harary2018graph} for the definition.} $\mathcal{G}_2[\mathcal{S}]$ is connected, then there exists an instance in $\mathcal{L}(\mathcal{G}, n, r)$ for which the problem \eqref{eqn:probres} has at least $2^{r|\mathcal{S}(\mathcal{G}_1)|}-2^r$ spurious local minima. In addition, the first $r$ rows of each local minimum are linearly independent.
\end{lemma}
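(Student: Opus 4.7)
The plan is to mimic the rank-$1$ strategy of Theorem~\ref{thm:spuriousmany}, but to carry it out on the restriction $\mathcal{W}^{n\times r}$, so that the continuous orthogonal redundancy of the B-M factorization reduces to the discrete subgroup of diagonal $\pm1$ matrices. After relabeling so that $1\in\mathcal{S}$, I take the degenerate ground truth $\mathbf{X}^*\in\mathcal{W}^{n\times r}$ with $\mathbf{X}^*_i=\mathbf{I}_r$ for $i\in\mathcal{S}$, $\mathbf{X}^*_j=\mathbf{0}$ for $j\in[m]\setminus\mathcal{S}$, and zero trailing rows, and I set $\mathbf{M}^*:=\mathbf{X}^*(\mathbf{X}^*)^T$. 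For each sign pattern $\sigma=(\mathbf{D}_i)_{i\in\mathcal{S}}$ with each $\mathbf{D}_i$ a diagonal $\pm1$ matrix, let $\mathbf{X}^\sigma$ be obtained from $\mathbf{X}^*$ by replacing its $i$-th block by $\mathbf{D}_i$. Each $\mathbf{X}^\sigma$ lies in $\mathcal{W}^{n\times r}$ (diagonal matrices are lower triangular), has linearly independent first $r$ rows (the invertible block $\mathbf{D}_1$), and agrees with $\mathbf{X}^*$ on $\Omega(\mathcal{G})$: self-loops see $\mathbf{D}_i^2=\mathbf{I}_r$; $\mathcal{E}_1$-edges from $\mathcal{S}$ to $\mathcal{S}^c$ multiply against a zero block; and $\mathcal{E}_2$-edges inside $\mathcal{S}$ only see the off-diagonals of $\mathbf{D}_i\mathbf{D}_j$, which vanish just like those of $\mathbf{I}_r$. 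Hence each $\mathbf{X}^\sigma$ is a global minimum of the unperturbed $f$.

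Since the residual $(\mathbf{X}^\sigma(\mathbf{X}^\sigma)^T-\mathbf{M}^*)_\Omega$ vanishes at each $\mathbf{X}^\sigma$, the Hessian collapses to $\Delta:\nabla^2 f(\mathbf{X}^\sigma):\Delta=2\|(\mathbf{X}^\sigma\Delta^T+\Delta(\mathbf{X}^\sigma)^T)_\Omega\|_F^2$, and I must show it is strictly positive on $\mathcal{W}^{n\times r}\setminus\{0\}$. This will be done in three rigidity steps. (i) The self-loop at $i=1$ forces $\mathbf{D}_1\Delta_1^T+\Delta_1\mathbf{D}_1=\mathbf{0}$, which together with the constraint $(\Delta_1)_{kl}=0$ for $k<l$ inherited from $\mathcal{W}^{n\times r}$ yields $\Delta_1=\mathbf{0}$. (ii) For each edge $(i,j)\in\mathcal{E}_2$ with $i,j\in\mathcal{S}$, the off-diagonal vanishing of $\mathbf{D}_i\Delta_j^T+\Delta_i\mathbf{D}_j$, combined with the self-loop identities at both endpoints, expresses $(\Delta_j)_{kl}$ as a signed copy of $(\Delta_i)_{kl}$; connectedness of $\mathcal{G}_2[\mathcal{S}]$ then propagates $\Delta_1=\mathbf{0}$ to $\Delta_i=\mathbf{0}$ for every $i\in\mathcal{S}$. (iii) Maximality of $\mathcal{S}$ in $\mathcal{G}_1$ guarantees that every $j\notin\mathcal{S}$ has a neighbor $i\in\mathcal{S}$ with $(i,j)\in\mathcal{E}_1$, and the fully observed block contributes $\|\mathbf{D}_i\Delta_j^T\|_F^2=\|\Delta_j\|_F^2$ (since $\mathbf{D}_i$ is orthogonal), forcing $\Delta_j=\mathbf{0}$. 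The fully observed trailing rows handle $\Delta$ on row indices above $mr$ in the same manner.

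Finally, I perturb to $\mathbf{X}^*(\epsilon):=\mathbf{X}^*+\epsilon\mathbf{E}\in\mathcal{W}^{n\times r}$ with $\mathbf{E}$ a generic small perturbation chosen so that every block of $\mathbf{M}^*(\epsilon):=\mathbf{X}^*(\epsilon)(\mathbf{X}^*(\epsilon))^T$ is rank-$r$, placing $\mathcal{P}_{\mathbf{M}^*(\epsilon),\Omega(\mathcal{G}),n,r}$ in $\mathcal{L}(\mathcal{G},n,r)$. Applying IFT to $\nabla f(\mathbf{X};\epsilon)=0$ on $\mathcal{W}^{n\times r}$ at each $(\mathbf{X}^\sigma,0)$ yields for small $\epsilon$ a unique nearby solution $\mathbf{X}^\sigma(\epsilon)$, which, by continuity of the Hessian, is a strict local minimum with linearly independent first $r$ rows. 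Distinct $\sigma$ give distinct $\mathbf{X}^\sigma(\epsilon)$, for a total of $2^{r|\mathcal{S}|}$ strict local minima. The $O_r$-orbit of the true factor $\mathbf{X}^*(\epsilon)$ intersects $\mathcal{W}^{n\times r}$ in exactly $2^r$ points --- precisely the $\mathbf{X}^\sigma(\epsilon)$ whose $\mathbf{D}_i$ is independent of $i\in\mathcal{S}$ --- so at least $2^{r|\mathcal{S}|}-2^r$ of the constructed minima are spurious. The main obstacle is the Hessian step: one must verify simultaneously that the $\mathcal{W}^{n\times r}$-restriction together with the self-loop at $i=1$ kill the entire block $\Delta_1$ rather than merely an antisymmetric half of it, that the $\mathcal{G}_2[\mathcal{S}]$-connectivity really transmits this vanishing across all of $\mathcal{S}$, and that the maximality of $\mathcal{S}$ in $\mathcal{G}_1$ rigidifies every block outside $\mathcal{S}$; a secondary difficulty is choosing $\mathbf{E}$ generic enough for $\mathcal{L}(\mathcal{G},n,r)$-membership yet small enough for IFT to apply uniformly at all $2^{r|\mathcal{S}|}$ base points.
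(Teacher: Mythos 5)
Your proposal is correct and follows essentially the same route as the paper's proof: the same degenerate ground truth supported on $\mathcal{S}$ with diagonal blocks, the same enumeration of $2^{r|\mathcal{S}|}$ unperturbed global minima via per-block sign matrices, the same three-part rigidity argument for positive definiteness of the Hessian on the lower-triangular subspace (self-loop at block $1$ plus triangularity kills $\Delta_1$, connectivity of $\mathcal{G}_2[\mathcal{S}]$ propagates through $\mathcal{S}$, maximality of $\mathcal{S}$ handles $\mathcal{S}^c$), and the same IFT perturbation step. Your explicit identification of the $2^r$ non-spurious minima as the intersection of the $O_r$-orbit of the true factor with $\mathcal{W}^{n\times r}$ is a slightly more detailed accounting than the paper's, but the argument is otherwise the same.
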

The Hessian at each local minimum of the unperturbed problem is positive definite along the tangent space of $\mathcal{W}^{n\times r}$, where the off-diagonal observations of $\Omega(\mathcal{G})$ play a key role. If the first $r$ rows of a local minimum $\hat{\bf X}$ are linearly independent, the diagonal elements of $\hat{\bf X}$ are nonzero. Therefore, by flipping the signs of columns, we can find an equivalent local minimum $\tilde{\bf X}$ with positive diagonal elements, i.e., $\tilde{\bf X}$ lies in the range of $\phi^{emb}(\cdot)$. By symmetry, the total number of such local minima is $2^{r(|\mathcal{S}(\mathcal{G}_1)|-1)}-1$. Since the restriction map $\phi^{emb}(\cdot)$ is a bijection in a neighborhood of $\tilde{\bf X}$, the equivalent class $[ \tilde{\bf X} ]\in\mathbb{R}^{n\times r}/O_r$ is a local minimum of problem \eqref{eqn:obj-linear} on the quotient manifold and thus $\tilde{\bf X}$ is a local minimum of problem \eqref{eqn:obj-linear}. The above argument leads to Theorem \ref{thm:rspuriousmany}.
%
\begin{theorem}\label{thm:rspuriousmany}
Consider a graph $\mathcal{G} = (\mathcal{V}, \mathcal{E}_1, \mathcal{E}_2)$ satisfying the conditions of Lemma \ref{lem:rankr-1}. There exists an instance in $\mathcal{L}(\mathcal{G}, n, r)$ for which problem \eqref{eqn:obj-linear} has at least $2^{r(|\mathcal{S}(\mathcal{G}_1)|-1)}-1$ equivalent classes of spurious solutions.
\end{theorem}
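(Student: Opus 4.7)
The plan is to deduce Theorem \ref{thm:rspuriousmany} from Lemma \ref{lem:rankr-1} by lifting the spurious minima of the restricted problem \eqref{eqn:probres} through the quotient map. Lemma \ref{lem:rankr-1} yields $2^{r|\mathcal{S}(\mathcal{G}_1)|}-2^r$ spurious local minima of \eqref{eqn:probres} on $\mathcal{W}^{n\times r}$, each with linearly independent first $r$ rows (equivalently, all diagonal entries nonzero). Since $f({\bf X}{\bf Q})=f({\bf X})$ for every ${\bf Q}\in O_r$, local minima of \eqref{eqn:obj-linear} naturally come in $O_r$-equivalence classes, and the goal is to count those classes.

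I would next exploit the action of the sign-flip subgroup $D_r:=\{\mathrm{diag}(\pm 1,\ldots,\pm 1)\}\subset O_r$ of order $2^r$. Crucially, $D_r$ preserves $\mathcal{W}^{n\times r}$, so for any spurious local minimum $\hat{\bf X}\in\mathcal{W}^{n\times r}$ produced by Lemma \ref{lem:rankr-1}, the full $D_r$-orbit $\{\hat{\bf X}{\bf D}:{\bf D}\in D_r\}$ lies in $\mathcal{W}^{n\times r}$ and consists of spurious local minima as well. Because the diagonal entries of $\hat{\bf X}$ are all nonzero, this orbit has exactly $2^r$ distinct elements, precisely one of which, call it $\tilde{\bf X}$, has strictly positive diagonal entries and therefore lies in the range of $\phi^{emb}$. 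Partitioning the $2^{r|\mathcal{S}|}-2^r$ spurious minima into $D_r$-orbits of size $2^r$ yields $2^{r(|\mathcal{S}|-1)}-1$ such canonical representatives $\tilde{\bf X}$.

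The final step is to transfer these representatives into distinct equivalence classes in $\mathbb{R}^{n\times r}/O_r$ of spurious local minima of the unrestricted problem \eqref{eqn:obj-linear}. Since the first $r$ rows of $\tilde{\bf X}$ are linearly independent, $\phi^{emb}$ is a bijection on a small neighborhood of $[\tilde{\bf X}]$, so the restriction of $f$ to $\mathcal{W}^{n\times r}$ agrees (via $\phi^{emb}$) with the quotient function $f\circ\pi^{-1}$ on a neighborhood of $[\tilde{\bf X}]$. Hence the local minimality of $\tilde{\bf X}$ in \eqref{eqn:probres} implies local minimality of $[\tilde{\bf X}]$ on $\mathbb{R}^{n\times r}/O_r$, which in turn means $\tilde{\bf X}$ is a local minimum of \eqref{eqn:obj-linear}. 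The equivalence classes are spurious because Lemma \ref{lem:rankr-1} already excludes the $2^r$ global minima on $\mathcal{W}^{n\times r}$.

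The main obstacle is ensuring that two different canonical representatives $\tilde{\bf X}_1,\tilde{\bf X}_2$ obtained from distinct $D_r$-orbits give rise to distinct $O_r$-classes. If there were ${\bf Q}\in O_r$ with $\tilde{\bf X}_1=\tilde{\bf X}_2{\bf Q}$, then uniqueness of the RQ decomposition with positive diagonal (valid because the first $r$ rows are linearly independent) forces ${\bf Q}\in D_r$ and ${\bf Q}={\bf I}$, contradicting that the $D_r$-orbits were distinct. This uniqueness argument, together with the bijectivity of $\phi^{emb}$ on the appropriate neighborhood, is the technical core that turns the count $2^{r(|\mathcal{S}|-1)}-1$ obtained in $\mathcal{W}^{n\times r}$ into the same count of equivalent classes of spurious local minima of \eqref{eqn:obj-linear}.
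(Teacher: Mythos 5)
Your proposal is correct and follows essentially the same route as the paper: lift the $2^{r|\mathcal{S}|}-2^r$ spurious minima of the restricted problem \eqref{eqn:probres} from Lemma \ref{lem:rankr-1}, quotient by the column sign-flip group to get $2^{r(|\mathcal{S}|-1)}-1$ representatives with positive diagonal, and transfer local minimality to \eqref{eqn:obj-linear} via the local bijectivity of $\phi^{emb}$. Your explicit check that distinct sign-flip orbits yield distinct $O_r$-classes (via uniqueness of the RQ decomposition) is a detail the paper leaves implicit, but it is the same argument.
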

Finally, we give an estimate on the largest lower bound for the number of spurious local minima.
\begin{corollary}\label{cor:rsinglemissing}
There exists an instance in $\mathcal{L}(\mathcal{G}, n, r)$ for which the problem \eqref{eqn:obj-linear} has at least $ 2^{n - 2r} - 1$ equivalent classes of spurious solutions. In addition, there exists an instance in $\mathcal{L}(\mathcal{G}, n, r)$ with $|\Omega| = n^2-2r$ for which the problem \eqref{eqn:obj-linear} has spurious solutions.
\end{corollary}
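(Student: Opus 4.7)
The plan is to apply Theorem \ref{thm:rspuriousmany} to two carefully chosen block sparsity graphs $\mathcal{G}$, one for each half of the corollary. Throughout, set $m := \lfloor n/r \rfloor$.

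For the first claim, I would maximize $|\mathcal{S}(\mathcal{G}_1)|$ so that the exponent $r(|\mathcal{S}(\mathcal{G}_1)| - 1)$ in Theorem \ref{thm:rspuriousmany} becomes $n - 2r$ (saturating when $r \mid n$). The largest maximal independent set in a connected graph on $m$ vertices has size $m - 1$, achieved by a star. Concretely, take $\mathcal{E}_1$ to be the union of the star centered at vertex $m$ and the self-loops $\{(i,i) : i = 1, \ldots, m-1\}$, and take $\mathcal{E}_2$ to be the edges of a spanning path on $\mathcal{S} := \{1, \ldots, m-1\}$. Then $\mathcal{G}_1$ is connected via the star and non-bipartite via any of the self-loops in $\mathcal{S}$; the set $\mathcal{S}$ is a maximal independent set whose vertices all carry self-loops; and $\mathcal{G}_2[\mathcal{S}]$ is connected by construction. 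Theorem \ref{thm:rspuriousmany} then furnishes an instance in $\mathcal{L}(\mathcal{G}, n, r)$ with at least $2^{r(m - 2)} - 1$ equivalent classes of spurious local minima, which equals $2^{n - 2r} - 1$ when $r \mid n$.

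For the second claim, the aim is to keep almost every entry observed while still forcing spurious solutions. I would set $\mathcal{E}_1 := ([m] \times [m]) \setminus \{(1,2), (2,1)\}$ and $\mathcal{E}_2 := \{(1,2), (2,1)\}$, so that $\mathcal{G}_1$ is the complete graph on $[m]$ with all self-loops, minus a single edge. This $\mathcal{G}_1$ remains connected and non-bipartite, its unique maximal independent set is $\mathcal{S} = \{1, 2\}$ (whose two vertices both carry self-loops), and $\mathcal{G}_2[\mathcal{S}]$ is the single edge $\{1,2\}$, hence connected. Theorem \ref{thm:rspuriousmany} then produces at least $2^{r(2-1)} - 1 = 2^r - 1 \geq 1$ spurious equivalent class. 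For the observation count, the $m^2 - 2$ blocks in $\mathcal{E}_1$ contribute $(m^2 - 2)r^2$ entries and the two $\mathcal{E}_2$ blocks contribute $2(r^2 - r)$ entries, while the last $n - mr$ fully observed rows and columns contribute $k(2n - k)$ entries where $k := n - mr$; using $mr = n - k$, the total telescopes to $|\Omega| = n^2 - 2r$.

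The main difficulty is not conceptual but verificatory: for each construction one must carefully check every hypothesis of Theorem \ref{thm:rspuriousmany} (connectivity of $\mathcal{G}_1$, non-bipartiteness, the existence of a maximal independent set all of whose vertices carry self-loops, and connectivity of $\mathcal{G}_2[\mathcal{S}]$) and balance the observation count exactly. Once these are in place, the result is immediate from Theorem \ref{thm:rspuriousmany}.
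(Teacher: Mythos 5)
Your proposal is correct and follows essentially the same route as the paper: both parts are obtained by instantiating Theorem \ref{thm:rspuriousmany}, first with a self-looped star whose leaves form the maximal independent set of size $m-1$ (the paper's proof of Corollary \ref{cor:singlemissing} uses the same $K_{1,m-1}$ construction), and second with the complete graph on $[m]$ minus one edge, whose missing block is partially recovered through $\mathcal{E}_2$ so that exactly $2r$ diagonal entries remain unobserved. Your explicit entry count and your caveat that the exponent $r(m-2)$ equals $n-2r$ only when $r\mid n$ are, if anything, more careful than the paper, which tacitly writes $m=n/r$.
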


\subsection{General Loss Functions}

In this part, we generalize the preceding results to the problem \eqref{eqn:obj-general}. To extend the constructions to a general loss function $g(\cdot)$, we require a few weak assumptions on the loss function $g(\cdot)$.
%
\begin{assumption}\label{asp:general}
The following conditions hold for the function $g(\cdot)$:
\setlist{nolistsep}
\begin{enumerate}[label= \bf \roman*), noitemsep]
    \item $g(\cdot)$ is twice continuously differentiable;
    \item the matrix $\mathbf{0}_{n\times r}$ is the unique minimizer of $g(\cdot)$;
    \item the Hessian of $g(\cdot)$ at $\mathbf{0}_{n\times r}$ is positive definite.
\end{enumerate}
\end{assumption}
%
Now, we can extend the results in Section \ref{sec:rankr} to the general loss function case under the above assumption.
\begin{theorem}\label{thm:characterization}
Consider a graph $\mathcal{G} = (\mathcal{V}, \mathcal{E}_1, \mathcal{E}_2)$ satisfying the conditions of Lemma \ref{lem:rankr-1} and suppose that Assumption \ref{asp:general} holds. There exists an instance in $\mathcal{L}(\mathcal{G}, n, r)$ for which the problem \eqref{eqn:obj-general} has at least $2^{r(|\mathcal{S}(\mathcal{G}_1)|-1)}-1$ equivalent classes of spurious local minima.
\end{theorem}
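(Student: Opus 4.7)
The plan is to port the proof of Theorem \ref{thm:rspuriousmany} from the $\ell_2$ setting to a general loss $g(\cdot)$ satisfying Assumption \ref{asp:general}. I would reuse exactly the same ground truth construction and the same $2^{r|\mathcal{S}(\mathcal{G}_1)|}$ candidate global minima on $\mathcal{W}^{n\times r}$ that are produced in Lemma \ref{lem:rankr-1}, and then show that the Implicit Function Theorem perturbation step still goes through because (a) the gradient of $f({\bf X})=g[({\bf X}{\bf X}^T-{\bf M}^*)_\Omega]$ vanishes at every such candidate and (b) the Hessian remains positive definite on the tangent space to $\mathcal{W}^{n\times r}$. Once this is in place, the same symmetry-quotient bookkeeping used in Theorem \ref{thm:rspuriousmany} delivers $2^{r(|\mathcal{S}(\mathcal{G}_1)|-1)}-1$ equivalent classes of spurious local minima.

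The key step is a chain-rule calculation at a global minimum $\hat{{\bf X}}$, where $(\hat{{\bf X}}\hat{{\bf X}}^T-{\bf M}^*)_\Omega={\bf 0}$. Assumption \ref{asp:general}(ii) gives $\nabla g({\bf 0})={\bf 0}$, which immediately forces $\nabla f(\hat{{\bf X}})={\bf 0}$ and annihilates every term in the Hessian expansion that would involve $\nabla g({\bf 0})$. What survives is
\[
\Delta:\nabla^2 f(\hat{{\bf X}}):\Delta = \bigl(\hat{{\bf X}}\Delta^T+\Delta\hat{{\bf X}}^T\bigr)_\Omega : \nabla^2 g({\bf 0}) : \bigl(\hat{{\bf X}}\Delta^T+\Delta\hat{{\bf X}}^T\bigr)_\Omega.
\]
By Assumption \ref{asp:general}(iii), $\nabla^2 g({\bf 0})$ is a positive definite symmetric bilinear form, so there exists $c>0$ with
\[
\Delta:\nabla^2 f(\hat{{\bf X}}):\Delta \;\geq\; c\,\bigl\|\bigl(\hat{{\bf X}}\Delta^T+\Delta\hat{{\bf X}}^T\bigr)_\Omega\bigr\|_F^2,
\]
i.e.\ up to a positive constant the Hessian quadratic form dominates the $\ell_2$ Hessian quadratic form analyzed in Lemma \ref{lem:rankr-1}. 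Hence positive definiteness on the tangent space to $\mathcal{W}^{n\times r}$ is inherited verbatim from the $\ell_2$ case.

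With these two ingredients, I would apply the Implicit Function Theorem on the restricted problem \eqref{eqn:probres} (with $g$ replacing $\|\cdot\|_F^2$): a generic small perturbation of ${\bf M}^*$ preserves each of the $2^{r|\mathcal{S}(\mathcal{G}_1)|}$ unperturbed global minima of the restricted objective as a distinct local minimum, producing $2^{r|\mathcal{S}(\mathcal{G}_1)|}-2^r$ spurious local minima on $\mathcal{W}^{n\times r}$. Linear independence of the first $r$ rows persists by continuity, so each such minimum can be lifted through column-sign flips to a representative with positive diagonal, and $\phi^{emb}$ is a local bijection there. The quotient-manifold argument from Theorem \ref{thm:rspuriousmany} then reduces the count by a factor of $2^r$ and collapses the orthogonal orbit, giving $2^{r(|\mathcal{S}(\mathcal{G}_1)|-1)}-1$ equivalent classes of spurious local minima of the full, unrestricted problem \eqref{eqn:obj-general}.

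The main obstacle I anticipate is purely bookkeeping: I have to check that Assumption \ref{asp:general}(iii)'s positive definiteness, which is a statement on all of $\mathbb{R}^{n\times r}$ (or the relevant matrix space on which $g$ lives), still gives a strict lower bound when restricted to matrices supported on $\Omega$ and to directions $\hat{{\bf X}}\Delta^T+\Delta\hat{{\bf X}}^T$; this is automatic from the restriction of a positive definite form to any subspace, but must be stated carefully. A secondary technical point is that the perturbation $\epsilon$ must be chosen both small enough that the IFT neighborhoods around the $2^{r|\mathcal{S}(\mathcal{G}_1)|}$ unperturbed minima are disjoint and generic enough that the perturbed instance still belongs to $\mathcal{L}(\mathcal{G},n,r)$; both requirements are open and full-measure conditions, so a standard genericity argument (identical to the one used in Examples \ref{exm:1}--\ref{exm:2}) suffices.
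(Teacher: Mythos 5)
Your proposal is correct and follows essentially the same route as the paper: the paper's own proof of this theorem is a one-line remark that one need only verify positive definiteness of the Hessian at the global solutions under Assumption \ref{asp:general}, and your chain-rule computation (using part (ii) to kill the $\langle \nabla g(\mathbf{0}), 2(\Delta\Delta^T)_\Omega\rangle$ term and part (iii) to bound the surviving term below by $c\,\|(\hat{\bf X}\Delta^T+\Delta\hat{\bf X}^T)_\Omega\|_F^2$) is exactly the missing verification, after which the IFT and quotient-manifold bookkeeping of Lemma \ref{lem:rankr-1} and Theorem \ref{thm:rspuriousmany} carry over unchanged.
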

We note that the $\ell_2$-loss function $g(\cdot) = \|\cdot\|_F^2$ satisfies the conditions in Assumption \ref{asp:general}. As another example, regularizers are ubiquitously used in the low-rank matrix optimization literature \citep{ge2016matrix, ge2017no,fattahi2020exact}. As a corollary to Theorem \ref{thm:characterization}, the regularized version of the problem \eqref{eqn:obj-linear} also suffers from the same issue. In this case, the loss function is equal to
\[\setlength{\abovedisplayskip}{3pt} g({\bf X}) := \left\|(\mathbf{XX}^T - {\bf M}^*)_\Omega\right\|_F^2 + Q({\bf X}), \setlength{\belowdisplayskip}{3pt}\]
where $Q({\bf X}) := \lambda \left( \|{\bf X}_i\| - \alpha \right)_+^4$ is from \cite{ge2016matrix}, $(x)_+:=\max\{x,0\}$ and $\alpha,\lambda > 0$ are constants.
Since the regularizer does not change the 
landscape around global solutions with a large $\alpha$, Assumption \ref{asp:general} is satisfied and Theorem \ref{thm:characterization} is applicable to the regularized problem.

\section{MORE OBSERVATIONS LEAD TO SPURIOUS LOCAL MINIMA}
\label{sec:more-obs}

In Section \ref{sec:exp-sol}, we showed that the B-M factorization formulation \eqref{eqn:obj-linear} has an exponential number of spurious local minima on low-complexity MC problem instances. 
In this section, we exhibit another unfavorable behaviour of the B-M factorization. We identify an MC problem instance in $\mathcal{L}(\mathcal{G}, n, r)$ with some pattern $\Omega$ that has no spurious solution while adding 
observations to $\Omega$ leads to spurious solutions. 
Let $m$ and $r$ be natural numbers with $m\geq 2r$. We define $n := mr$ and let the graph be $\mathcal{G}_k := (\mathcal{V},\mathcal{E}_k)$ where $k\in[m]$ is an arbitrary index and
\begin{equation*}
\setlength{\abovedisplayskip}{3pt}\setlength{\belowdisplayskip}{3pt}
    \mathcal{V} := [m],\quad \mathcal{E}_k := \{ (k,j),~(j,k) ~|~ \forall j \in [m] \}.
\end{equation*}
In the measurement set $\Omega_k$, we observe the blocks ${\bf M}_{i,j}$ for all $(i,j)\in\mathcal{E}_k$;
%
see Figure \ref{fig:matrixprojection}.
\begin{figure}[t]
    \centering
    \begin{equation*}
    \resizebox{.95\hsize}{!}{ 
    $\mathbf{M}_{\Omega_k} =  \left[ \begin{array}{c|c|c}
        \mathbf{0} & \begin{array}{c}
            \mathbf{M}_{1,k}  \\ 
            \vdots  
        \end{array}  & \mathbf{0} \\ 
        \hline
        \begin{array}{cc}
            \mathbf{M}_{k,1} & \cdots  \\
        \end{array} & \mathbf{M}_{k,k} & \begin{array}{cc}
             \cdots & \mathbf{M}_{k,m}
        \end{array} \\
        \hline 
        \mathbf{0} & \begin{array}{c}
            \vdots  \\ 
            \mathbf{M}_{m,k}
        \end{array}  & \mathbf{0}
    \end{array} \right]$
    }
    \end{equation*}
    \caption{Measurement operator $\mathcal{A}_{\Omega_k}$ on matrix $\mathbf{M}$.}
    \label{fig:matrixprojection}
    \vspace{-1.5em}
\end{figure}
$\Omega_k$ contains only full block observations induced by $\mathcal{G}_k$. The next proposition states that if the ground truth matrix is generic, every SOCP\footnote{Second-order
critical points are defined as those points that satisfy the first-order and the second-order necessary optimality conditions.} of the problem \eqref{eqn:obj-linear} is a global minimum.
\begin{proposition}\label{prop:nospurious}
Given an index $k\in[m]$, let the measurement set $\Omega$ be equal to ${\Omega}_k$. Assume that the block ${\bf M}^*_{i,j}$ of the ground truth matrix ${\bf M}^*$ has rank $r$ for all $i,j\in[m]$. Then, every 
SOCP of problem \eqref{eqn:obj-linear} is a global minimum.
\end{proposition}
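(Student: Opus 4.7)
The plan is to partition $\mathbf{X}\in\mathbb{R}^{n\times r}$ into $m$ square blocks $\mathbf{X}_1,\ldots,\mathbf{X}_m\in\mathbb{R}^{r\times r}$, observe that with residuals $\mathbf{R}_{i,j}:=\mathbf{X}_i\mathbf{X}_j^T-\mathbf{M}^*_{i,j}$ the objective decomposes as
\begin{equation*}
f(\mathbf{X}) = \|\mathbf{R}_{k,k}\|_F^2 + 2\sum_{i\neq k}\|\mathbf{R}_{i,k}\|_F^2,
\end{equation*}
and then to argue that at every SOCP the block $\mathbf{X}_k$ must be invertible. Once invertibility is established, the first-order conditions $\nabla_{\mathbf{X}_i} f = 4\mathbf{R}_{i,k}\mathbf{X}_k = 0$ for $i\neq k$ and $\nabla_{\mathbf{X}_k} f = 4\sum_j \mathbf{R}_{k,j}\mathbf{X}_j = 0$ immediately yield $\mathbf{R}_{i,k}=0$ for all $i\neq k$ and, substituting back, $\mathbf{R}_{k,k}=0$, so $f(\mathbf{X})=0$, which is the global minimum value of problem \eqref{eqn:obj-linear}.

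The core step is the proof of invertibility, which I would approach by contradiction using a carefully chosen Hessian direction. Assume $\mathrm{rank}(\mathbf{X}_k)=s<r$ and pick nonzero $\mathbf{w}\in\ker(\mathbf{X}_k^T)$ and $\mathbf{u}\in\ker(\mathbf{X}_k)$; both exist because $r-s\geq 1$. For an arbitrary $i_0\neq k$ and free $\mathbf{v},\mathbf{p}\in\mathbb{R}^r$, test the direction $\Delta$ defined by $\Delta_k=\mathbf{w}\mathbf{v}^T$, $\Delta_{i_0}=\mathbf{p}\mathbf{u}^T$, and $\Delta_i=\mathbf{0}$ otherwise. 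A direct expansion, exploiting $\mathbf{w}^T\mathbf{X}_k=\mathbf{0}$ and $\mathbf{X}_k\mathbf{u}=\mathbf{0}$ to annihilate $\Delta_{i_0}\mathbf{X}_k^T$ and simplify every cross product, gives
\begin{equation*}
\Delta : \nabla^2 f(\mathbf{X}) : \Delta = 4\|\mathbf{w}\|^2\|\mathbf{X}\mathbf{v}\|^2 - 4\|\mathbf{v}\|^2\mathbf{w}^T\mathbf{M}^*_{k,k}\mathbf{w} + 8(\mathbf{u}^T\mathbf{v})(\mathbf{R}_{i_0,k}\mathbf{w})^T\mathbf{p}.
\end{equation*}
Crucially, this expression contains no term quadratic in $\mathbf{p}$; hence, viewing the form as a function of $\mathbf{p}$ with $\mathbf{v}$ fixed, it is affine, so the SOC inequality can hold for every $\mathbf{p}\in\mathbb{R}^r$ only when the linear-in-$\mathbf{p}$ coefficient is identically zero. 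Selecting $\mathbf{v}$ with $\mathbf{u}^T\mathbf{v}\neq 0$ then forces $\mathbf{R}_{i_0,k}\mathbf{w}=\mathbf{0}$, so $\ker(\mathbf{X}_k^T)\subseteq\ker(\mathbf{R}_{i_0,k})$.

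Combining this SOC-derived inclusion with the FOC-derived inclusion $\mathrm{range}(\mathbf{X}_k)\subseteq\ker(\mathbf{R}_{i_0,k})$ and the orthogonal direct sum $\mathbb{R}^r=\mathrm{range}(\mathbf{X}_k)\oplus\ker(\mathbf{X}_k^T)$ yields $\mathbf{R}_{i_0,k}=\mathbf{0}$, i.e., $\mathbf{X}_{i_0}\mathbf{X}_k^T=\mathbf{M}^*_{i_0,k}$. The left-hand side has rank at most $s<r$, while by the block-rank hypothesis the right-hand side has rank $r$---the desired contradiction. The main difficulty lies in the Hessian expansion in this specific direction: one must verify carefully that the twin choices $\mathbf{w}\in\ker(\mathbf{X}_k^T)$ and $\mathbf{u}\in\ker(\mathbf{X}_k)$ jointly eliminate every contribution that would render the form quadratic in $\mathbf{p}$ while preserving a nonzero linear-in-$\mathbf{p}$ cross term; after that, the remainder reduces to the direct-sum decomposition of $\mathbb{R}^r$ and a rank comparison.
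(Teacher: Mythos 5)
Your proof is correct, and it shares the paper's overall skeleton: decompose $\mathbf{X}$ into $r\times r$ blocks, note that the first-order conditions alone give global optimality once $\mathbf{X}_k$ is invertible, and use the second-order condition to rule out a singular $\mathbf{X}_k$. The two arguments diverge in how that last step is executed. The paper takes $\mathbf{y}_k\in\ker(\hat{\mathbf{X}}_k)$, sets $\Delta_j=\mathbf{z}_j\mathbf{y}_k^T$ for $j\neq k$ and $\Delta_k=\epsilon\,\mathbf{y}_k\mathbf{y}_k^T$ with the specific choice $\mathbf{z}_j=\mathbf{M}^*_{j,k}\mathbf{y}_k$, and shows the quadratic form equals a strictly negative $O(\epsilon)$ term plus $O(\epsilon^2)$, so the point is a strict saddle. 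You instead use a left null vector $\mathbf{w}\in\ker(\mathbf{X}_k^T)$ in the $k$-th block and a right null vector $\mathbf{u}\in\ker(\mathbf{X}_k)$ in the $i_0$-th block, observe that the form is \emph{affine} in the free parameter $\mathbf{p}$, and conclude from positive semidefiniteness that $\mathbf{R}_{i_0,k}\mathbf{w}=0$; combined with the first-order identity $\mathbf{R}_{i_0,k}\mathbf{X}_k=0$ and $\mathbb{R}^r=\mathrm{range}(\mathbf{X}_k)\oplus\ker(\mathbf{X}_k^T)$ this forces $\mathbf{R}_{i_0,k}=0$ and a rank contradiction with $\mathrm{rank}(\mathbf{M}^*_{i_0,k})=r$. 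Your route buys two things: it avoids the $\epsilon$-asymptotics entirely, and its cross term correctly carries the full residual $\mathbf{R}_{i_0,k}=\hat{\mathbf{X}}_{i_0}\hat{\mathbf{X}}_k^T-\mathbf{M}^*_{i_0,k}$, whereas the paper's expansion silently drops the $O(\epsilon)$ contribution of $\langle\hat{\mathbf{X}}_j\hat{\mathbf{X}}_k^T,\Delta_j\Delta_k^T\rangle$ (which need not vanish when $\mathbf{y}_k$ is only a right null vector); your bookkeeping is therefore airtight where the paper's requires a small repair. What the paper's version buys in exchange is an explicit strict descent direction, i.e., the slightly stronger statement that every non-global critical point is a strict saddle rather than merely not an SOCP.
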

 %


%
Next, we construct a graph $\tilde{\mathcal{G}}_k := (\mathcal{V}, \tilde{\mathcal{E}}_k, \mathcal{E}_2) $, where
\begin{equation*}
\setlength{\abovedisplayskip}{3pt}\setlength{\belowdisplayskip}{3pt}
\begin{aligned}
\tilde{\mathcal{E}}_k &:= \mathcal{E}_k \cup \{ (i,i)~|~\forall i\in[m] \},\\
\mathcal{E}_2 &:= \{(i,j)~|~ \forall i,j \in[m], ~i\neq j\}.
\end{aligned}
\end{equation*}
Namely, we have included all self-loops and nondiagonal observations of each block in the new graph. A maximal independent set for the subgraph $\tilde{\mathcal{G}}_{k,1} := (\mathcal{V}, \tilde{\mathcal{E}}_k) $  is $\mathcal{S} := [m] \backslash \{k\}$. We define a new measurement set $\tilde{\Omega}_k  := \Omega( \tilde{\mathcal{G}}_k)$. Since $\tilde{\mathcal{E}}_k$ is a superset of $\mathcal{E}_k$, the measurement set $\tilde{\Omega}_k$ is larger than $\Omega_k$. Using Theorem \ref{thm:rspuriousmany}, we obtain the following result.
%
%
\begin{corollary}\label{cor:moreobs}
Every instance of the MC problem with the measurement set $\tilde{\Omega}_k$ and full rank blocks ${\bf M}^*_{i,j}$ for all $i,j\in[m]$ belongs to $\mathcal{L}(\mathcal{G}, n, r)$. The formulation \eqref{eqn:obj-linear} of an instance of the problem has at least $2^{r(m-2)}-1$ equivalent classes of spurious local minima, while all spurious solutions disappear when using the smaller set $\Omega_k$.
\end{corollary}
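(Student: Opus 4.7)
The plan is to derive Corollary \ref{cor:moreobs} as a direct application of Theorem \ref{thm:rspuriousmany} to the enlarged graph $\tilde{\mathcal{G}}_k$, combined with Proposition \ref{prop:nospurious} for the reverse half about $\Omega_k$. The whole argument is essentially a verification that the combinatorial hypotheses of the earlier results are satisfied, so no new analytical ideas are needed; the main care is in bookkeeping the graph-theoretic conditions.

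First I would check membership in $\mathcal{L}(\mathcal{G}, n, r)$. The full-rank-block hypothesis gives properties \textbf{i} and \textbf{ii} of the class directly once one notes that ${\bf M}^*$ is rank-$r$ because each block is (and ${\bf M}^*\succeq 0$ is imposed by construction). For property \textbf{iii}, the subgraph $\tilde{\mathcal{G}}_{k,1}=(\mathcal{V},\tilde{\mathcal{E}}_k)$ is a star centered at vertex $k$, so it is connected, and the self-loops at every vertex rule out any $2$-coloring, so it is non-bipartite.

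Next I would identify the set $\mathcal{S}:=[m]\setminus\{k\}$ as a maximal independent set of $\tilde{\mathcal{G}}_{k,1}$ whose vertices all carry self-loops, as stated in the paragraph before the corollary. Independence is immediate because every non-loop edge of $\tilde{\mathcal{E}}_k$ touches $k$; maximality is immediate because adjoining $k$ creates such an edge; and the self-loops are there by construction of $\tilde{\mathcal{E}}_k$. To invoke Lemma \ref{lem:rankr-1}, I still need $\mathcal{G}_2[\mathcal{S}]$ to be connected. By construction $\mathcal{E}_2$ is the complete (loopless) graph on $[m]$, whose restriction to $\mathcal{S}$ is again a complete graph on $m-1$ vertices, hence connected. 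Feeding $|\mathcal{S}|=m-1$ into Theorem \ref{thm:rspuriousmany} produces at least $2^{r(|\mathcal{S}|-1)}-1=2^{r(m-2)}-1$ equivalent classes of spurious local minima, which is precisely the bound claimed.

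For the final assertion that the spurious minima vanish under the smaller pattern $\Omega_k$, I would simply invoke Proposition \ref{prop:nospurious}: the full-rank-block hypothesis is exactly its standing assumption, and it guarantees that every second-order critical point, in particular every local minimum, is already a global minimum. The only subtle point in the entire argument is the handling of self-loops, which are needed both for the non-bipartiteness of $\tilde{\mathcal{G}}_{k,1}$ and for the maximal-independent-set hypothesis of Lemma \ref{lem:rankr-1}, so I would make those two uses explicit rather than leaving them implicit in the graph construction.
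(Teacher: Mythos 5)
Your proposal is correct and matches the paper's (implicit) argument exactly: the paper also obtains the corollary by verifying the hypotheses of Lemma \ref{lem:rankr-1}/Theorem \ref{thm:rspuriousmany} for $\tilde{\mathcal{G}}_k$ with the maximal independent set $\mathcal{S}=[m]\setminus\{k\}$ (all vertices carrying self-loops, $\mathcal{G}_2[\mathcal{S}]$ complete hence connected), and by citing Proposition \ref{prop:nospurious} for the absence of spurious solutions under $\Omega_k$. Your explicit bookkeeping of the non-bipartiteness via self-loops and of the connectivity of $\mathcal{G}_2[\mathcal{S}]$ is exactly the verification the paper leaves to the reader.
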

Results of Proposition \ref{prop:nospurious} and Corollary \ref{cor:moreobs} conclude that the landscape of the problem \eqref{eqn:obj-linear} deteriorates when the number of observations is increased. This phenomenon further reveals the unfavorable behavior of the B-M factorization on low-rank matrix optimization problems, even if the information-theoretic complexity is low.

\section{MEASURE OF COMPLEXITY FOR FACTORIZATION APPROACH}
\label{sec:metric}

In Section \ref{sec:exp-sol}, we showed that if there is an MC problem with a non-unique completion, a slightly perturbed problem will have exponentially many spurious local minima in the B-M factorization formulation \eqref{eqn:obj-general}.
%
Hence, bifurcation behaviors appear around measurement matrices ${\bf M}^*_\Omega$ that are associated with multiple global solutions. For a given measurement operator $\mathcal{A}$, a measurement matrix $\mathcal{A}({\bf M})$ that allows multiple global solutions designates an unacceptable region in the space of ground truth solutions.
Based on this intuition, we define a metric to capture the extent of the bifurcation behavior. We define the set of measurements that allow non-unique solutions:
\begin{equation*}
\setlength{\abovedisplayskip}{3pt}\setlength{\belowdisplayskip}{3pt}
    \begin{aligned}
        \mathcal{T}_\mathcal{A} := \Big\{ \mathcal{A}({\bf M})&:~ \exists \mathbf{X}_1, {\bf X}_2\in\mathbb{R}^{n\times r} \\
        \mathrm{ s.t. }~ &\mathbf{X}_1{\bf X}_1^T \neq \mathbf{X}_2{\bf X}_2^T,\\
        &\mathcal{A}({\bf M}) = \mathcal{A}(\mathbf{X_1X_1}^T) = \mathcal{A}(\mathbf{X_2X_2}^T) \Big\}.
    \end{aligned}
\end{equation*}
Then, we define a complexity metric below.
\begin{definition}[Complexity metric]
The complexity metric for operator $\mathcal{A}$ and ground truth ${\bf M}^*$ is defined as
\begin{equation*}
\setlength{\abovedisplayskip}{3pt}\setlength{\belowdisplayskip}{3pt}
    \mathrm{dist}(\mathcal{A}({\bf M}^*), \mathcal{T}_\mathcal{A}) := \min_{\mathcal{A}({\mathbf{M}}) \in \mathcal{T}_\mathcal{A}}  \|\mathcal{A}(\mathbf{M}^*) - \mathcal{A}({\mathbf{M}}) \|_F.
\end{equation*}
\end{definition}
%

It is expected that for instances with a large complexity metric, the optimization complexity of algorithms based on the B-M factorization approach will be aligned with the corresponding information-theoretic complexity.
%
For example, if the RIP condition is satisfied, the set $\mathcal{T}$ is empty and therefore 
the complexity metric is always $\infty$. Oppositely, the instances studied in Section \ref{sec:exp-sol} with spurious solutions all have small complexity metrics. 
Consequently, the complexity metric is a possible measure of the optimization complexity for the MC problem with the B-M factorization: the optimization problem should be more difficult if the complexity metric is lower.

\section{GRADIENT-BASED METHODS FAIL WITH HIGH PROBABILITY}
\label{sec:gd}

We show that the exponential number of spurious local minima in preceding instances will make most randomly initialized gradient-based methods fail with a high probability. The existence of spurious local minima does not necessarily imply the failure of gradient-based methods; see \cite{ma2018gradient,chen2020nonconvex}. The analysis in this section is based on the gradient flow
\begin{equation} \label{eqn:grad-flow}
\setlength{\abovedisplayskip}{3pt}\setlength{\belowdisplayskip}{3pt}
\dot{{\bf X}}(t) = - \nabla_{\bf X} f({\bf X}(t)),\quad {\bf X}(0) = {\bf X}_0.
\end{equation}
%
It is known that the trajectories of gradient-based methods with a small enough step size are close to those of the gradient flow. We can view a variety of gradient-based methods as ordinary differential equation (ODE) solvers applied to the gradient flow \eqref{eqn:grad-flow}. Then, the convergence of the discrete trajectories when the step size goes to $0$ can be guaranteed by the consistency and the stability of the ODE solver. \cite{scieur2017integration}  proved that the consistency and stability conditions are satisfied by several commonly used gradient-based methods, such as the gradient descent, the proximal point and the accelerated gradient descent methods. Although \cite{scieur2017integration} considered minimizing a strongly convex function, the consistency and stability conditions only depend on the ODE solver and the Lipschitz continuity of the underlying gradient flow.
%
We need the following assumption on the loss function to characterize the global landscape.
\begin{assumption}\label{asp:gd-1}
The loss function $g(\cdot)$ satisfies the sparse $(\delta,r)$-RIP condition in the $\Omega$-norm for some constant $\delta\in[0,1)$ and integer $r\geq 1$. Namely, the inequality
\begin{align*}
    (1-\delta) \| {\bf N}_\Omega\|_F^2 \leq {\bf N} : \nabla^2 g({\bf M}_{\Omega}) : {\bf N} \leq (1+\delta) \| {\bf N}_\Omega\|_F^2
\end{align*}
holds for all matrices ${\bf M}$ and ${\bf N}$ with rank at most $2r$.
\end{assumption}
The sparse RIP condition is remarkably different from the conservative RIP condition. For example, the $\ell_2$-loss function satisfies the sparse $(0,r)$-RIP condition for every $r$, while the RIP condition does not hold if $\Omega$ is not a complete graph. Under the above assumption, we show that for the unperturbed example constructed in Section \ref{sec:exp-sol}, the gradient flow will converge to each global minimum with equal probability in the rank-$1$ case. The main difficulty is to show that all saddle points of the objective function $f({\bf X})$ are strict and therefore their region of attractions (ROAs) have measure zero \citep{lee2016gradient}.
\begin{lemma}\label{lem:gd-1}
Suppose that Assumption \ref{asp:gd-1} holds for $r=1$ and $\delta=\mathcal{O}(1/n)$, where $n\geq 3$ is the size of the ground truth matrix. There exists an MC problem instance such that the following statements hold for the problem \eqref{eqn:obj-general}:
\setlist{nolistsep}
\begin{itemize}[noitemsep]
    \item there are $2^{\lceil n/2\rceil}$ equivalent global minima;
    \item if the gradient flow \eqref{eqn:grad-flow} is initialized with an absolutely continuous radial probability distribution, it converges to each global minimum with the equal probability $2^{-\lceil n/2\rceil}$,
\end{itemize}
where a probability distribution is called radial if its density function at point $x$ only depends on $\|x\|$ and the absolute continuity is with respect to the Lebesgue measure.
\end{lemma}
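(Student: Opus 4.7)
The plan is to use the instance from Example~\ref{exm:1}: the tridiagonal observation set $\Omega$ and ground truth ${\bf x}^* = \sum_{k=0}^{\lceil n/2\rceil - 1} e_{2k+1}$ with ${\bf M}^* = {\bf x}^*({\bf x}^*)^T$. Every ${\bf x} \in \mathcal{X}^*$ satisfies $({\bf x}{\bf x}^T - {\bf M}^*)_\Omega = 0$, and since Assumption~\ref{asp:general} makes $\mathbf{0}$ the unique minimizer of $g$, each such ${\bf x}$ is a global minimum of $f$; this gives $|\mathcal{X}^*| = 2^{\lceil n/2 \rceil}$ equivalent global minima, which settles the first bullet. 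The second bullet then splits into two independent subclaims: \emph{(a)} the gradient flow converges to \emph{some} element of $\mathcal{X}^*$ with probability one, and \emph{(b)} the resulting limit distribution on $\mathcal{X}^*$ is uniform.

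For \emph{(a)}, I would classify the critical points and prove that every non-global one is a strict saddle. In the $\ell_2$ case the gradient equations reduce componentwise to $x_i[(x_i^2 - x_i^{*2}) + x_{i-1}^2 + x_{i+1}^2]=0$, which, combined with $x_i^* \in \{0,1\}$ and the tridiagonal adjacency, forces $x_{2k}=0$ and $x_{2k+1}\in\{-1,0,1\}$ at every critical point; the global minima correspond to all $x_{2k+1}\neq 0$. If $\bar{\bf x}$ is any other critical point, then $\bar{x}_i=0$ for some odd $i$ and both neighbors $\bar{x}_{i\pm 1}=0$; direct computation along $\Delta = e_i$ yields $(\bar{\bf x}\Delta^T + \Delta\bar{\bf x}^T)_\Omega = 0$ (killing the $\nabla^2 g$ contribution) and $\Delta:\nabla^2 f(\bar{\bf x}):\Delta = -4$ from the $\nabla g$ contribution. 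I would extend to general $g$ by using $\phi := (\bar{\bf x}\bar{\bf x}^T-{\bf M}^*)_\Omega$, which has rank at most $2$, and the identity $\nabla g[\phi] = \int_0^1 \nabla^2 g[s\phi]\cdot \phi\, ds$ (valid because $\nabla g[\mathbf{0}]=0$ by Assumption~\ref{asp:general}). A polarized version of the sparse $(\delta,1)$-RIP gives $[\nabla g[\phi]]_{ii} = -2 + \mathcal{O}(\delta\|\phi\|_F) = -2 + \mathcal{O}(\delta\sqrt{n})$, so with $\delta = \mathcal{O}(1/n)$ strict negativity of the Hessian along $e_i$ is preserved. Coercivity of $f$ (inherited from the RIP lower bound) rules out escape to infinity, and the stable manifold theorem of \cite{lee2016gradient} then implies that the gradient flow from any absolutely continuous initialization converges to $\mathcal{X}^*$ almost surely.

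For \emph{(b)}, let the group $G = \{\pm 1\}^{\lceil n/2\rceil}$ act on $\mathbb{R}^n$ by flipping signs of odd-indexed coordinates, represented by diagonal orthogonal matrices $D$. Because ${\bf M}^*_\Omega$ is diagonal with entries in $\{0,1\}$, $D{\bf M}^*_\Omega D = {\bf M}^*_\Omega$; using $(DAD)_\Omega = D A_\Omega D$ we obtain $f(D{\bf x}) = g[D(({\bf x}{\bf x}^T - {\bf M}^*)_\Omega) D]$. For $g=\|\cdot\|_F^2$ this equals $f({\bf x})$, making the gradient flow $G$-equivariant. An absolutely continuous radial initial distribution is $O(n)$-invariant and therefore $G$-invariant (sign flips are orthogonal). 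Combining the equivariance of the flow with the invariance of the initialization, the law of the limit point on $\mathcal{X}^*$ is $G$-invariant, and since $G$ acts simply transitively on $\mathcal{X}^*$ it must be uniform, assigning mass $2^{-\lceil n/2\rceil}$ to each element.

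The main obstacle I anticipate is the symmetry required in step~\emph{(b)}: the sparse RIP alone does not literally imply the sign-flip invariance $g(DYD)=g(Y)$. The argument is clean for the canonical $\ell_2$ loss (and for any $g$ whose dependence on its argument is only through entrywise squares), so one either strengthens Assumption~\ref{asp:general} with such an invariance for the general setting, or replaces exact equivariance by a quantitative approximate-equivariance argument in which the sparse RIP error translates into an $\mathcal{O}(\delta)$ deviation from uniform mass that vanishes in the stated $\delta = \mathcal{O}(1/n)$ regime.
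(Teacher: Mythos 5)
Your argument is clean and essentially complete for the $\ell_2$ loss, but the lemma is stated for a general loss $g$ under Assumption \ref{asp:gd-1}, and that is where your part \emph{(a)} has a genuine gap. Your classification of critical points — $x_{2k}=0$ and $x_{2k+1}\in\{-1,0,1\}$ — comes from the componentwise factorization $x_i[(x_i^2-(x_i^*)^2)+x_{i-1}^2+x_{i+1}^2]=0$, which is a feature of the $\ell_2$ gradient $2(\mathbf{xx}^T-\mathbf{M}^*)_\Omega\mathbf{x}$. For general $g$ the stationarity condition is $\nabla g[(\mathbf{xx}^T-\mathbf{M}^*)_\Omega]\,\mathbf{x}=0$, which does not factor this way, so you have no control over where the non-global critical points sit; your subsequent Hessian estimate $[\nabla g[\phi]]_{ii}=-2+\mathcal{O}(\delta\sqrt{n})$ is only evaluated at the $\ell_2$ critical points and does not rule out other (possibly degenerate, or even locally minimizing) stationary points of the general-$g$ objective. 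The paper closes exactly this hole differently: it never classifies critical points, but instead shows via the sparse RIP that any first-order critical point satisfies $\|\hat{\mathbf{x}}^e\|\|\hat{\mathbf{x}}\|\leq 2\sqrt 2\,\delta\|(\mathbf{M}-\mathbf{M}^*)_\Omega\|_F$ (its Lemma \ref{lem:pf5-1}), and then combines this with the \cite{ge2017no}-type second-order inequality to force $\|(\mathbf{M}-\mathbf{M}^*)_\Omega\|_F=0$ at every second-order critical point. To repair your proof in the stated generality you would need an analogue of that first-order RIP estimate (or a perturbation argument showing general-$g$ critical points track the $\ell_2$ ones), not just the Hessian bound. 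Note also that the paper proves the lemma for the Example \ref{exm:2} measurement set (diagonal plus full even rows/columns), not your tridiagonal Example \ref{exm:1}; both are admissible under the ``there exists an instance'' phrasing, but the first-order estimate above exploits the fully observed even rows, so the choice is not cosmetic.

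Your part \emph{(b)} is essentially the paper's argument (its Lemma \ref{lem:pf5-2}), specialized to the subgroup of sign flips on odd coordinates rather than all of $\{\pm 1\}^n$; either group acts transitively on $\mathcal{X}^*$ and the invariance-of-the-initialization plus equivariance-of-the-flow conclusion is the same. Your observation that sign-flip invariance of $g$ does not follow from the sparse RIP is fair, and it is a limitation the paper's own proof shares (its equivariance lemma is also established only for the $\ell_2$ gradient), so I would not count it against you. Two further small points: the polarization of the sparse $(\delta,1)$-RIP you invoke requires the quadratic form on matrices of rank up to $3$ while the assumption covers rank $2r=2$ (the paper papers over the same issue by citing \cite{bi2020global}), and \cite{lee2016gradient} concerns gradient descent rather than the flow, though the strict-saddle/measure-zero conclusion transfers.
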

Examples of absolutely continuous radial probability distributions include zero-mean Gaussian distributions and uniform distributions over a ball centered at the origin. Note that the $\ell_2$-loss function satisfies the assumption of Lemma \ref{lem:gd-1}. Next, we show that with a sufficiently small perturbation to the previous instance of the problem, the ROA of each local minimum will not shrink significantly. Therefore, the gradient flow will converge to each global minimum or spurious solution with approximately the same probability.
\begin{theorem}\label{thm:gd-1}
Under the setting of Lemma \ref{lem:gd-1}, consider an absolutely continuous radial probability distribution. There exists an instance in $\mathcal{L}(\mathcal{G}, n, 1)$ for which the problem \eqref{eqn:obj-general} satisfies the following properties:
\setlist{nolistsep}
\begin{itemize}[noitemsep]
    \item the global minima are unique up to a sign flip;
    \item if the gradient flow \eqref{eqn:grad-flow} is initialized with the given distribution, it converges to a global minimum associated with the ground truth solution with probability at most $\mathcal{O}(2^{-\lceil n/2\rceil})$.
\end{itemize}
\end{theorem}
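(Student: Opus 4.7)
The plan is to start from the unperturbed instance of Lemma \ref{lem:gd-1}, apply a generic small perturbation exactly as in Example \ref{exm:1} so that the problem belongs to $\mathcal{L}(\mathcal{G}, n, 1)$, and then argue by continuous dependence of the gradient flow on parameters that each of the $2^{\lceil n/2 \rceil}$ regions of attraction (ROAs) is approximately preserved. Since only two of these ROAs correspond to the true ground truth (itself and its sign flip), summing their probability masses yields the desired bound.

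First, take $\mathbf{M}^*(\epsilon) = (\mathbf{x}^* + \epsilon)(\mathbf{x}^* + \epsilon)^T$ from Example \ref{exm:1}. For generic small $\epsilon$, the resulting problem has a unique global minimum up to a sign flip (so all but two of the former global minima become spurious). Applying the Implicit Function Theorem as in Example \ref{exm:1} shows that each of the $2^{\lceil n/2 \rceil}$ unperturbed global minima persists as a strict local minimum of the perturbed problem, and the sparse $(\mathcal{O}(1/n), 1)$-RIP hypothesis from Lemma \ref{lem:gd-1} guarantees that every unperturbed saddle persists as a strict saddle with a codimension-one stable manifold that varies continuously in $\epsilon$ via the Stable Manifold Theorem.

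Second, quantify the change of ROAs under the perturbation. Because the loss grows like $\|\mathbf{x}\|^4$ at infinity, every gradient trajectory enters a fixed compact set $K$ in a uniformly bounded time. On $K$, local hyperbolicity of equilibria combined with continuous dependence of the flow on $\epsilon$ implies that the map sending an initial condition to its limiting equilibrium agrees between the perturbed and unperturbed flows everywhere except in an $\mathcal{O}(\epsilon)$-tube around the finitely many unperturbed saddle stable manifolds. This tube has Lebesgue measure $\mathcal{O}(\epsilon)$ in $K$, hence probability mass $\mathcal{O}(\epsilon)$ under the bounded-density restriction of the radial distribution to $K$. Combining with Lemma \ref{lem:gd-1}, each perturbed ROA has probability $2^{-\lceil n/2 \rceil} + \mathcal{O}(\epsilon)$, so the two ROAs attached to the ground truth contribute at most $2 \cdot 2^{-\lceil n/2 \rceil} + \mathcal{O}(\epsilon) = \mathcal{O}(2^{-\lceil n/2 \rceil})$ after choosing $\epsilon$ small relative to $2^{-\lceil n/2 \rceil}$.

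The main obstacle is making the continuity-of-ROAs argument rigorous on a global scale: the ROAs are complicated open sets whose boundaries are unions of stable manifolds of saddles, and these manifolds are not bounded \emph{a priori}. The key inputs will be (i) the uniform spectral gaps at the strict saddles guaranteed by the sparse RIP condition, which prevents any saddle from becoming degenerate as $\epsilon \to 0$; (ii) the quartic radial growth of $f$, which confines the dynamics eventually to a compact set and converts pointwise flow-continuity into uniform ROA-continuity; and (iii) the Hartman--Grobman-type continuity of stable manifolds under smooth perturbations, which controls the symmetric difference of the perturbed and unperturbed ROA partitions. Without the sparse RIP hypothesis the spectral gaps could vanish along a subsequence and some ROA could collapse under perturbation, and without radial symmetry the distribution of probability mass among the two ground-truth ROAs would have to be estimated separately.
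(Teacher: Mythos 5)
Your high-level plan (persist the $2^{\lceil n/2\rceil}$ minima via IFT, then show each retains roughly its unperturbed share of initial conditions) matches the paper's, but the step where you actually control the regions of attraction has a genuine gap. You assert that the perturbed and unperturbed limit maps agree outside an $\mathcal{O}(\epsilon)$-measure tube around the stable manifolds of the saddles, citing Hartman--Grobman and stable-manifold continuity. Those theorems are \emph{local}: they control the invariant manifolds in a neighborhood of each equilibrium, whereas the boundary of a region of attraction is a union of \emph{global} stable manifolds, and trajectories passing near a saddle take unboundedly long to leave its neighborhood, so ``every trajectory enters a compact set in uniformly bounded time'' does not upgrade pointwise flow continuity to the uniform, measure-quantitative ROA continuity you need. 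Moreover, your claim that each saddle has a ``codimension-one stable manifold'' is unsupported: the proof of Lemma~\ref{lem:gd-1} only shows the saddles are strict (one strictly negative Hessian eigenvalue), not hyperbolic, so zero eigenvalues are possible and the stable manifold need not be a codimension-one set varying continuously in $\epsilon$. Finally, you would also need to rule out new critical points of the perturbed problem appearing away from the unperturbed ones; nothing in your argument addresses this.

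The paper sidesteps all of this by proving only a \emph{one-sided} containment, which is all the statement requires. For each unperturbed global minimum $\hat{\bf x}$ it invokes the converse Lyapunov theorem (Theorem 4.17 of Khalil) to obtain a smooth Lyapunov function $V$ on $\mathcal{R}_{\hat{\bf x}}$ with $\langle dV/d{\bf x}, -\nabla f\rangle \le -W < 0$, picks a compact sublevel set $\mathcal{V}_M \subset \mathcal{R}_{\hat{\bf x}}$ carrying all but a $2^{-\lceil n/2\rceil}$ fraction of that ROA's probability mass, and notes that on the compact set $\mathcal{V}_M$ the strict decrease of $V$ (with margin $\epsilon_0/2$) survives a sufficiently small perturbation of the gradient field. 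Hence the perturbed flow started in $\mathcal{V}_M$ stays there and converges to the unique local minimum inside, giving a lower bound on the probability of landing in each spurious basin without ever analyzing how the ROA boundaries move. If you want to salvage your route, you should replace the two-sided symmetric-difference estimate with this kind of compact-absorbing-set argument, or else supply a genuine proof of global ROA continuity, which is not a routine consequence of the lemmas you cite.
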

%
The results of Theorem \ref{thm:gd-1} imply that, in the rank-$1$ case, most gradient-based methods with a small enough step size and a suitable random initialization will converge to a spurious solution with an overwhelming probability. The proof works for the general rank case if it can be shown that there is no degenerate saddle points for the above-mentioned unperturbed instances of the problem. 

%
\begin{remark} We remark that the trajectories of stochastic gradient descent (SGD) methods cannot be approximated by those of the gradient flow. Hence, our analysis cannot automatically imply the failure of SGD. However, the proof of Lemma \ref{lem:gd-1} can be adopted to conclude that SGD methods with a random initialization will converge to each global minimum with equal probability in the unperturbed case. Since the trajectories of SGD methods will not vary dramatically with a sufficiently small perturbation, they still converge to each solution with approximately the same probability. Therefore, we also expect the SGD methods to fail with high probability. 
\end{remark}

\section{EXPERIMENTS}
\label{sec:num}

Numerical results are presented to support the failure of the gradient descent algorithm. Each MC problem with the B-M factorization formulation \eqref{eqn:obj-linear} is solved by the gradient descent algorithm with a constant step size, where the step size is chosen to be small enough to guarantee that the algorithm converges to a stationary point. Regarding the measurement set, the graph $\mathcal{G}_1:=(\mathcal{V},\mathcal{E}_1)$ is generated randomly by the Erd\"{o}s–R\'{e}nyi model $G(m,p)$, where $\mathcal{V}:=[m]$ and each edge of the graph is included independently with probability $p$. If $\mathcal{G}_1$ is not connected or a node in the maximal independent set $\mathcal{S}$ does not have a self-loop, the missed edges are added to satisfy these conditions. In addition, a connected subtree $\mathcal{G}_2 = (\mathcal{S}, \mathcal{E}_2(\mathcal{S}))$ is generated for nondiagonal observations. We define $\mathcal{G}:=(\mathcal{V},\mathcal{E}_1,\mathcal{E}_2)$ and subsequently the measurement set $\Omega(\mathcal{G})$. In addition, the unperturbed ground truth matrix $\mathbf{M}^*={\bf X^*}({\bf X}^*)^T$ is defined as $\mathbf{M}^*_{i,j} := \mathbf{I}_r$ for all $i, j \in \mathcal{S}$, and $\mathbf{M}^*_{i,j} := \mathbf{0}_{r\times r}$ otherwise. Lastly, a Gaussian random perturbation matrix $\epsilon\in\mathbb{R}^{n\times r}$ is generated and normalized, e.g. $\|\epsilon\|_F =1$. Then, the perturbed ground truth matrix $\mathbf{M}^*(\epsilon) := (\mathbf{X^* + \gamma\epsilon})(\mathbf{X^* + \gamma\epsilon})^T $ is generated, where $\gamma>0$ is the perturbation size. We evaluate the success rate of the algorithm at $100$ equally distributed values of $\gamma \in (0, 0.5)$ with $300$ random initializations of the gradient algorithm for each instance and each $\gamma$. 
\begin{figure}
    \centering
    \includegraphics[scale =0.25]{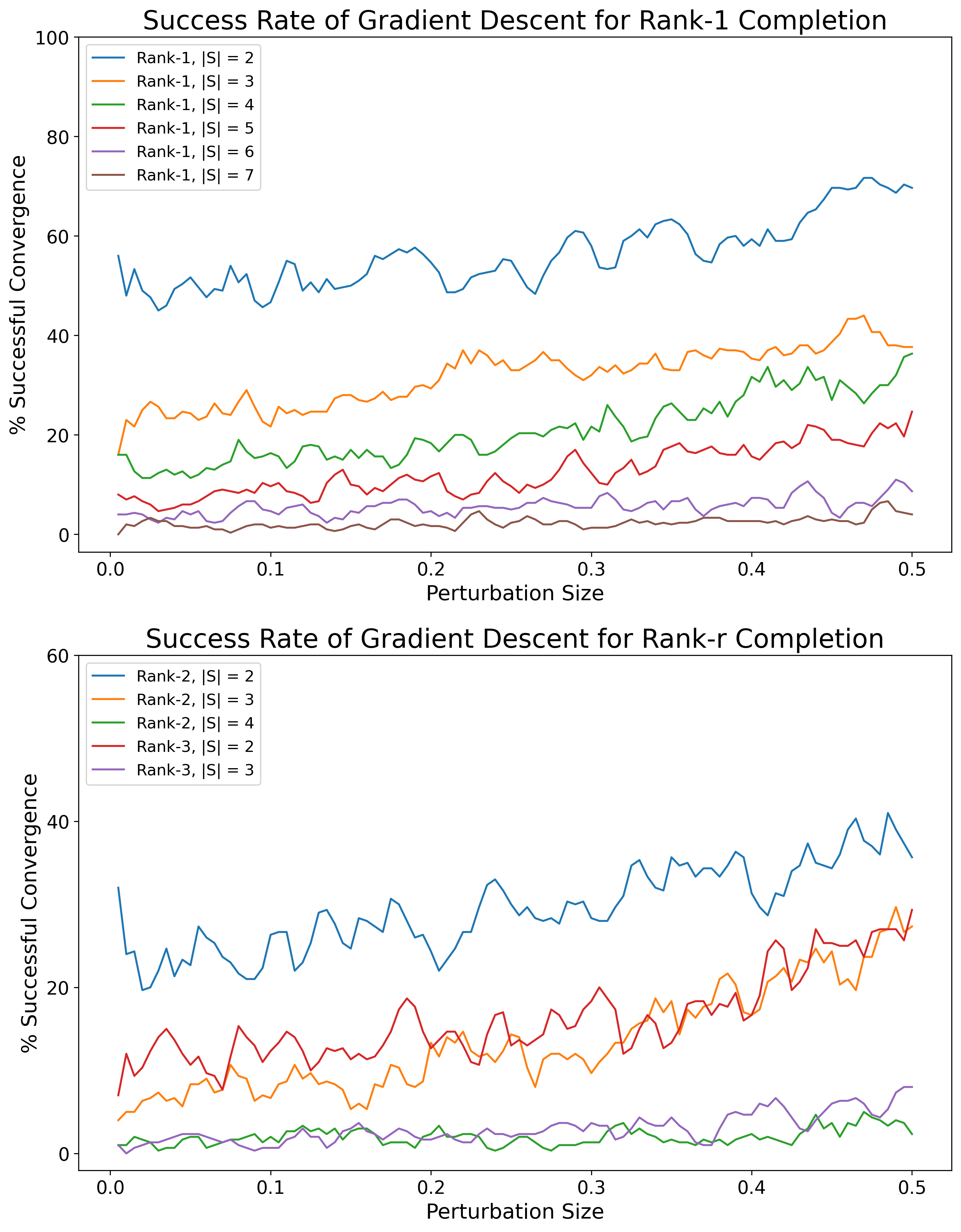}
    \caption{Success rate of gradient descent method for \textit{(top)} rank-1 and \textit{(bottom)} rank-r MC problem instances with randomly generated measurement sets.}
    \label{fig:experiments}
    \vspace{-1.5em}
\end{figure}

The top figure in Figure \ref{fig:experiments} illustrates the success rate of the gradient descent algorithm with the rank $r=1$, dimension $n = 20$ and various maximum independent set sizes $|\mathcal{S}|$. The results conform to Theorem \ref{thm:gd-1} implying that the success rate is less than $1/(2^{|\mathcal{S}|-1})$ when the perturbation size $\gamma$ is small. The bottom figure in Figure \ref{fig:experiments} makes similar observations with different ranks and maximum independent set sizes $|\mathcal{S}|$ when $m$ is equal to $10$. These observations imply that Theorem \ref{thm:gd-1} can be extended to the general rank case, since the success rate is less than $1/(2^{r(|\mathcal{S}|-1)})$ when $\gamma$ is small. We note that the behavior of the algorithm may change when $\gamma$ is large.
Specifically, significant improvements in success rate are observed when $\gamma > 0.2$ for most problem instances. This is in accordance with our notion of complexity metric.

\section{CONCLUSION}

In this paper, we provided a negative answer to the question of whether the B-M factorization approach can capture the benign properties of low-complexity MC problem instances. More specifically, we defined a class of MC problem instances that could be solved in polynomial time. We showed that there exist MC problem instances in this class that have exponentially many spurious local minima in the B-M factorization formulation \eqref{eqn:obj-general}. 
The results hold for a general class of loss functions, including the commonly used regularized formulation. In addition, for the rank-$1$ case, we proved that gradient-based methods fail with high probability for such instances. Numerical results verify that similar behaviors also hold for higher rank cases. These results imply that the optimization complexity of methods based on the factorized problem \eqref{eqn:obj-general} are not aligned with the information-theoretic complexity of the MC problem. Furthermore, we derived a complexity metric that potentially captures the complexity of the B-M factorization formulation \eqref{eqn:obj-general}.

\bibliographystyle{icml2021}
\bibliography{ref}

\appendix
\onecolumn

\section{PROOFS IN SECTION \ref{sec:exp-sol}}

\subsection{Proof of Proposition \ref{prop:rankrpolynomial}}
\begin{proof}
The condition $\mathcal{P}_{\mathbf{M^*}, \Omega, n, r} \in \mathcal{L}(\mathcal{G}, n, r)$ implies that $\mathcal{G}_1 = (\mathcal{V}, \mathcal{E}_1)$ is connected and non-bipartite. Since the graph is non-bipartite, there exists a cycle with an odd number of vertices $\mathcal{C}_{odd} = (\mathcal{V}_{odd}, \mathcal{E}_{odd})$ in $\mathcal{G}_1$ in which $\mathcal{E}_{odd} \subset \mathcal{E}_1$. To numerically find an odd cycle, the breadth first search method requires $\mathcal{O}(|\mathcal{V}| + |\mathcal{E}_1|) = \mathcal{O}(m^2)$ operations.
Without loss of generality, we assume that the set of vertices of the cycle is $\mathcal{V}_{odd}  = \{ 1,2,\dots, 2k+1\}$ and the set of edges is $ \mathcal{E}_{odd} = \{ (1,2), (2,3), \dots, (2k+1,1) \}$, where $k$ is a nonnegative integer. Suppose that the matrix ${\bf X}^*\in\mathbb{R}^{n\times r}$ satisfies ${\bf M}^*={\bf X}^*({\bf X}^*)^T$. We denote the $i$-th $r\times r$ block of ${\bf X}^*$ as ${\bf X}_i^*$ for all $i\in[m]$, i.e.,
\[ \mathbf{M^*}  = \mathbf{X}^*(\mathbf{X}^*)^T = \begin{bmatrix} \mathbf{X}^*_1 \\ \vdots\\  \mathbf{X}^*_m \end{bmatrix}\begin{bmatrix} (\mathbf{X}^*_1)^T & \dots & (\mathbf{X}^*_m)^T \end{bmatrix}. \]
Since $\mathcal{P}_{\mathbf{M^*}, \Omega, n, r} \in \mathcal{L}(\mathcal{G}, n, r)$, the block $\mathbf{X}^*_i$ is nonsingular for every $i\in[m]$, which further implies that the block ${\bf M}_{i,j}^*$ is nonsingular for all $i,j\in[m]$. Using the relation that ${\bf M}_{i,j}^* = {\bf X}_i^*({\bf X}_j^*)^T$, we can calculate that
\begin{align*}
    \left[  \prod_{i=1}^k \left({\bf M}_{2i-1,2i}^*\left({\bf M}_{2i,2i+1}^*\right)^{-T}\right)\right] {\bf M}_{2k+1,1}^* = {\bf X}_1^*({\bf X}_1^*)^T,
\end{align*}
Since the left-hand side only contains observed blocks, the matrix ${\bf X}_1^*({\bf X}_1^*)^T$ can be computed via observed blocks. Since computing the inverse of an $r\times r$ matrix and computing the product of two $r\times r$ matrices both require $\mathcal{O}(r^3)$ operations, the total number of operations required for computing ${\bf X}_1^*({\bf X}_1^*)^T$ is $\mathcal{O}[ (2k+1) r^3 ] = \mathcal{O}( m r^3 )$. In addition, computing the Cholesky decomposition of ${\bf X}_1^*({\bf X}_1^*)^T$ requires $\mathcal{O}(r^3)$ operations, which produces a matrix ${\bf X}_1^*{\bf R}$ for some orthogonal matrix ${\bf R}\in\mathbb{R}^{r\times r}$. 

With the knowledge of ${\bf X}_1^*{\bf R}$, we can recursively compute the block ${\bf X}_i^*$ using the connectivity of $\mathcal{G}_1$. More specifically, we use $\mathcal{P}\subset[m]$ to denote the set of vertices $i$ or which we have computed ${\bf X}_i^*{\bf R}$. We start with $\mathcal{P}=\{1\}$. At each iteration, we choose indices $i\in\mathcal{P}$ and $j\notin \mathcal{P}$ such that $(i,j)\in\mathcal{E}_1$. Such a pair of indices always exists unless $\mathcal{P}=[m]$, since the graph $\mathcal{G}_1$ is connected. Then, using the observation 
\[ {\bf M}_{j,i}^* = {\bf X}_j^*({\bf X}_i^*)^T = ({\bf X}_j^*{\bf R})({\bf X}_i^*{\bf R})^T , \]
we first compute the matrix ${\bf X}_j^*{\bf R}$ with $\mathcal{O}(r^3)$ operations and then add $j$ to the set $\mathcal{P}$. We stop the iteration when $\mathcal{P}=[m]$. After this process, we can concatenate ${\bf X}_i^*{\bf R}$ for all $i\in[m]$ to obtain the matrix ${\bf X}^*{\bf R}$. The number of iterations is $m-1$ and thus the total number of operations is $\mathcal{O}(mr^3)$. 

Summarizing the two parts, the total number of operations to compute the matrix ${\bf X}^*{\bf U}$ is $\mathcal{O}(m^2 + mr^3) = \mathcal{O}[n^2/r^2 + nr^2]$.

\end{proof}

\subsection{Gradient and Hessian of the Problem \eqref{eqn:obj-linear}}

Before proceeding with the analysis for the proofs in the remainder of this paper, we first derive the gradient and the Hessian of the objective function of the problem \eqref{eqn:obj-linear}. We omit the proof since the calculation can be done via basic calculus. The gradient of the objective function can be written as 
\begin{equation}\label{eqn:gradient}
    \nabla f({{\bf X}}) = 2({{\bf X}}{{\bf X}}^T - \mathbf{M}^*)_\Omega {{\bf X}}.
\end{equation}
Similarly, the quadratic variant of the Hessian can be written as
\begin{align}\label{eqn:Hessian}
    \Delta : \nabla^2 f({{\bf X}}) : \Delta &= 4 \langle ({{\bf X}}{{\bf X}}^T - \mathbf{M}^*)_\Omega, \Delta\Delta^T  \rangle + 2\left\| \left( {{\bf X}}\Delta^T + \Delta {{\bf X}}^T \right)_\Omega \right\|_F^2.
\end{align}

\subsection{Proof of Theorem \ref{thm:spuriousmany}}

\begin{proof}
Let $\mathcal{S}(\mathcal{G}_1)$ be a maximal independent set of $\mathcal{G}_1$ such that every vertex in the set has a self-loop. We define the global solution as ${\bf M}^* := {\bf x}^*({\bf x}^*)^T$, where 
\begin{align*}
    x^*_i := c_i,\quad \forall i \in \mathcal{S},\quad x^*_i := 0,\quad \forall i \not \in \mathcal{S}
\end{align*}
and $\{ c_i~|~ \forall i\in\mathcal{S} \}$ is a set of nonzero constants. We note that in the case when $r=1$, the factor ${\bf X}\in\mathbb{R}^n$ is a vector. Therefore, we represent it using the notation for vectors, i.e., ${\bf x}$. Considering the problem instance $\mathcal{P}_{{\bf M}^*, \Omega(\mathcal{G}), n, 1}$, the set of global solutions of the problem \eqref{eqn:obj-linear} is given by
\begin{align*}
    \mathcal{X}^* := \left\{ {\bf x}\in\mathbb{R}^n ~|~ x_i^2=c_i^2,~\forall i\in\mathcal{S},~ x_i=0,~\forall i\notin \mathcal{S}  \right\},
\end{align*}
which has the cardinality $|\mathcal{X}^*| = 2^{|\mathcal{S}|}$. 
%
%
%
For every global solution $\hat{{\bf x}} \in \mathcal{X}^*$, we have $\hat{{\bf x}}\hat{{\bf x}}^T = \mathbf{M}^*$. Thus, we know that $\hat{{\bf x}}$ is a first-order critical point of the problem \eqref{eqn:obj-linear}. For every $\Delta\in\mathbb{R}^n$, the quadratic form of the Hessian \eqref{eqn:Hessian} can be written as 
\begin{align*}
    \Delta : \nabla^2 f(\hat{{\bf x}}) : \Delta &= 2\left\| \left( \hat{{\bf x}}\Delta^T + \Delta \hat{{\bf x}}^T \right)_\Omega \right\|_F^2 = \sum_{i \in \mathcal{S}} 2(\Delta_i\hat{x}_i + \hat{x}_i\Delta_i)^2  +  \sum_{j \not \in \mathcal{S}} \sum_{\substack{i \in \mathcal{S}\\ (i,j) \in \mathcal{E}_1}} \left[2( \Delta_j\hat{x}_i)^2 +2( \hat{x}_i\Delta_j)^2 \right] \\
    & = \sum_{i \in \mathcal{S}} 4( \Delta_i \hat{x}_i)^2  + \sum_{j \not \in \mathcal{S}} \sum_{\substack{i \in \mathcal{S}\\ (i,j) \in \mathcal{E}_1}}  4 ( \Delta_j\hat{x}_i)^2 = \sum_{i \in \mathcal{S}} 4\hat{x}_i^2 \cdot \Delta_i^2  + \sum_{j \not \in \mathcal{S}} \left({\sum}_{\substack{i \in \mathcal{S}\\ (i,j) \in \mathcal{E}_1}}  4 \hat{x}_i^2\right) \cdot \Delta_j^2.
\end{align*}
The first term in the above expression corresponds to self-loops in $\mathcal{S}$, while the second term corresponds to the edges between $\mathcal{S}$ and $\mathcal{S}^c := \mathcal{V} \backslash \mathcal{S}$. We note that the edges whose endpoints are both in $\mathcal{S}^c$ do not contribute to the quadratic form. Since $\mathcal{S}$ is a maximal independent, we know that
\[ \{  i \in \mathcal{S}~|~ (i,j) \in \mathcal{E}_1 \} \neq \emptyset,\quad \forall j\notin \mathcal{S}. \]
As a result, it holds that
\[  \Delta : \nabla^2 f(\hat{{\bf x}}) : \Delta  > 0, \quad \forall \Delta \in \mathbb{R}^n\backslash\{0\}, \] 
which implies that the Hessian at the global solution $\hat{{\bf x}}$ is positive definite. Then, we perturb the global solution of the above problem to be 
\[ {\bf M}^*(\epsilon) := {\bf x}^*(\epsilon) \left[{\bf x}^*(\epsilon)\right]^T = ({\bf x}^* + \epsilon)({\bf x}^* + \epsilon)^T, \]
where ${\bf x}^*(\epsilon) := {\bf x}^* + \epsilon$ and $\epsilon \in \mathbb{R}^n$ is a small perturbation. We denote the problem \eqref{eqn:obj-linear} after perturbation as
     \begin{equation*}
      \min_{\mathbf{x}\in\mathbb{R}^n} \tilde{f}(\mathbf{x}; \epsilon),
     \end{equation*}
  where $\tilde{f}(\mathbf{x}; \epsilon) := \| (\mathbf{xx}^T - \mathbf{M}(\epsilon)^*)_{\Omega} \|_F^2$. For a generic perturbation $\epsilon$, all components of $\epsilon$ are nonzero and the problem $\mathcal{P}_{{\bf M}^*(\epsilon), \Omega,n,1}$ belongs to the class $\mathcal{L}(\mathcal{G}, n, 1)$. This implies that the global solution of the problem \eqref{eqn:prob2} is unique up to a sign flip.

The earlier argument implies that $\nabla_x \tilde{f}(\hat{{\bf x}}; 0) = 0 $ and $\nabla_{xx}\tilde{f}(\hat{{\bf x}}; 0) \succ 0$. Now, we analyze the relationship between the local minima of the original problem and those of the perturbed problem. We consider the equation $\nabla_{\bf x} \tilde{f}({\bf x};\epsilon) = 0$ near an unperturbed global minimum $\hat{{\bf x}}\in\mathcal{X}^*$. Since $(\hat{{\bf x}}; 0)$ is a solution to the gradient equation and the Jacobian matrix with respect to ${\bf x}$ is equal to the positive definite Hessian $\nabla^2 f(\hat{{\bf x}})$, the Implicit Function Theorem (IFT) states that there exists a unique solution $\hat{{\bf x}}(\epsilon)$ in a neighbourhood of $\hat{{\bf x}}$ for every $\epsilon$ with a small norm. In addition, the continuity of the Hessian implies that $\nabla_{\bf xx}\tilde{f}(\hat{\mathbf{x}}(\epsilon); \epsilon) \succ 0$. Otherwise, we can find a sequence of $\epsilon^k \rightarrow 0$ and another sequence $\{ y^k\}$ with $\| y_k \| = 1$ such that $(y^{k})^T\nabla_{\bf xx}\tilde{f}(\hat{\mathbf{x}}(\epsilon); \epsilon)y^k < 0$. By taking the limit along a convergent subsequence of $\{y^k\}$, we arrive at a contradiction. Therefore, every point in $\hat{\mathbf{x}}^*(\epsilon)$ still satisfies the second-order sufficient conditions. Since there are $2^{|\mathcal{S}|}$ global minima for the unperturbed problem, an analysis through the IFT implies that there are $2^{|\mathcal{S}|}$ strict local minima for the problem \eqref{eqn:prob2}. Hence, there are $2^{|\mathcal{S}|}-2$ spurious local minima for the perturbed problem \eqref{eqn:prob2}.


\end{proof}

\subsection{Proof of Corollary \ref{cor:singlemissing}}

\begin{proof}
We first prove the claim about the largest lower bound. Theorem \ref{thm:spuriousmany} implies that there are at least $2^{|\mathcal{S}(\mathcal{G}_1)|}-2$ spurious local minima for a problem instance in $ \mathcal{L}(\mathcal{G}, n, 1)$. For an arbitrary connected and non-bipartite graph $\mathcal{G}_1$ with $n$ vertices, the maximal possible size of a maximal independent set with self-loops is $n-1$. More specifically, the graph $\mathcal{G}^*_1$ that attains this maximal value is the star graph $K_{1,n-1}$ complemented with self-loops for the $n-1$ independent vertices. Hence, Theorem \ref{thm:spuriousmany} imples that there exists a problem instance in $\mathcal{L}(\mathcal{G}^*, n, 1)$ with at least $2^{n-1}-2$ local minima, where we define $\mathcal{G}^* := (\mathcal{G}_1^*, \emptyset)$.
%

We then consider the second claim of this corollary. Consider the measurement set $\Omega$ that observes all entries of the ground truth ${\bf M}^*$ except ${\bf M}_{12}^*$ and ${\bf M}_{21}^*$. In this case, we have $|\Omega| = n^2 - 2$. Choose the set of vertices to be $\mathcal{V}:=[n]$ and the set of edges to be $\mathcal{E}_1 := [n]\times [n] \backslash \{(1,2),(2,1)\}$. Then, the graph $\mathcal{G}:= (\mathcal{V},\mathcal{E}_1,\emptyset)$ satisfies that $\Omega = \Omega( \mathcal{G})$ and
the maximal independent set is $\mathcal{S} := \{ i, j \}$. Therefore, Theorem \ref{thm:spuriousmany} implies that there exists a problem instance in $\mathcal{L}(\mathcal{G}, n, 1)$ that has at least $2^{|\mathcal{S}|}-2 = 2$ spurious local minima.
\end{proof}

\subsection{Proof of Corollary \ref{cor:function}}

\begin{proof}
For a generic vector ${\bf x}^* \in \mathbb{R}^n$, all elements of ${\bf x}^*$ are nonzero and we can decompose ${\bf x}^*$ into ${\bf x}^0 + {\bf x}^1$, where
\begin{equation*}
    \begin{aligned}
        x_i^0 &:= x_i^*, && \forall i\in \mathcal{S}, \quad \quad x_i^0 := 0, &&&& \forall i\notin \mathcal{S},\\
        x_i^1 &:= 0, && \forall i\in \mathcal{S}, \quad \quad x_i^1 := x_i^*, &&&& \forall i\notin \mathcal{S}. 
    \end{aligned}
\end{equation*}
We first consider the problem $\mathcal{P}_{{\bf x}^0({\bf x}^0)^T, \Omega(\mathcal{G}), n, 1}$. Using a similar proof as Theorem \ref{thm:spuriousmany}, we can prove that this problem has $2^{|\mathcal{S}|}$ equivalent global solutions in formulation \eqref{eqn:obj-linear}, which are described by the set
\[ \mathcal{X}^* := \left\{ {\bf x}\in\mathbb{R}^n ~|~ x_i^2 = (x_i^*)^2,~\forall i\in\mathcal{S},~x_i=0,~\forall i\notin \mathcal{S} \right\}. \]
In addition, the Hessian is positive definite at every global solution $\hat{\bf x}\in\mathcal{X}^*$. Using a similar argument as the proof of Theorem \ref{thm:spuriousmany}, we conclude that there exists a small constant $r_{{\bf x}^0}>0$ such that the conditions
\begin{align}
\label{eqn:pf2-1} 
\|\epsilon\| \leq r_{{\bf x}^0},\quad \epsilon_i \neq 0,\quad \forall i\in [m] 
\end{align}
imply that the problem $\mathcal{P}_{{\bf x}^0(\epsilon)[{\bf x}^0(\epsilon)]^T, \Omega(\mathcal{G}), n, 1}$ has at least $2^{|\mathcal{S}|}-2$ spurious local minima in formulation \eqref{eqn:obj-linear}, where ${\bf x}^0(\epsilon) := {\bf x}^0+\epsilon$. Moreover, the MC problem is ``scale-free'' in the sense that the formulation \eqref{eqn:obj-linear} of the problem $\mathcal{P}_{{\bf x}'({\bf x}')^T, \Omega(\mathcal{G}), n, 1}$ has spurious local minima if and only if that of the problem $\mathcal{P}_{(c{\bf x}')(c{\bf x}')^T, \Omega(\mathcal{G}), n, 1}$ has spurious local minima, where ${\bf x}'\in\mathbb{R}^n$ is an arbitrary vector and $c\neq 0$ is a constant. Therefore, we have the relation
\begin{align}
    \label{eqn:pf2-3}
    r_{c{\bf x}^0} = c \cdot r_{{\bf x}^0},\quad \forall c\neq 0.
\end{align}
Hence, it suffices to consider vectors ${\bf x}^0\in\mathcal{X}_0$, where
\[ \mathcal{X}_0 := \left\{ {\bf x}\in\mathbb{R}^n ~|~ x_i \neq 0,~ \forall i\in \mathcal{S},~ x_i = 0,~ \forall i\notin \mathcal{S},~ \|{\bf x}\| = 1\right\}. \]

Now, we consider a vector $\hat{\bf x}^0\in\mathbb{R}^n$ satisfying 
\[ \|\hat{\bf x}^0 - {\bf x}^0\| < r_{{\bf x}^0} / 2. \]
Then, as long as the generic perturbation $\epsilon$ satisfies $\|\epsilon\| \leq r_{{\bf x}^0} / 2$, the condition \eqref{eqn:pf2-1} implies that the problem $\mathcal{P}_{\hat{\bf x}^*(\epsilon)[\hat{\bf x}^*(\epsilon)]^T, \Omega(\mathcal{G}), n, 1}$ has at least $2^{|\mathcal{S}|}-2$ spurious local minima in formulation \eqref{eqn:obj-linear}, where $\hat{\bf x}^*(\epsilon) := \hat{\bf x}^0 + \epsilon$. This verifies the existence of a function ${h}_\mathcal{S}$ on the open set
\[ \mathcal{N}({\bf x}^0) := \left\{ {\bf x}\in\mathbb{R}^n ~|~ \|{\bf x} - {\bf x}^0\| < r_{{\bf x}^0} / 2 \right\},\quad \forall {\bf x}^0 \in\mathcal{X}_0. \]
Now, we consider the compact set
\[ \mathcal{X}_k := \left\{ {\bf x}\in\mathbb{R}^n ~|~ \min_{i\in\mathcal{S}}|x_i| \geq 1/k, ~ x_i = 0,~ \forall i\notin \mathcal{S},~ \|{\bf x}\| = 1\right\}, \quad k=1,2,\dots \]
The open set family $\{\mathcal{N}({\bf x}^0)~|~{\bf x}^0 \in \mathcal{X}_0\}$ comprises an open covering of the compact set $\mathcal{X}_k$. Hence, there exists a finite covering of the compact set $\mathcal{X}_k$ and thus there exists a small constant $\epsilon_k$ such that
\[ h_{\mathcal{S}}({\bf x}^0) \geq \epsilon_k,\quad \forall {\bf x}^0 \in \mathcal{X}_k. \]
Moreover, using the relation 
\[ \mathcal{X}_{k-1} \subset \mathcal{X}_{k}, \quad\forall k\geq 2, \quad \mathcal{X}_0 = \bigcup_{k=1}^\infty \mathcal{X}_k, \]
we can decrease the value of the function $h_{\mathcal{S}}$ to be
\[ h({\bf x}^0) := \epsilon_k, \quad \forall {\bf x}^0 \in \mathcal{X}_{k} \backslash \mathcal{X}_{k-1},~k\geq 2. \]
Using this definition, $h_\mathcal{S}$ reduces to $\min_{i\in\mathcal{S}}|x_i|$ and, with a little abuse of notations, we still write the new function as $h_\mathcal{S}( \min_{i\in\mathcal{S}}|x_i| )$.

Now, we can view the problem $\mathcal{P}_{{\bf x}^*({\bf x}^*)^T, \Omega(\mathcal{G}), n, 1}$ as the perturbed problem, where the generic perturbation is given by ${\bf x}^1$. The above analysis implies that the following condition holds
\begin{align*}
    \| {\bf x}_{{\mathcal{S}^c}^*}\| = \| {\bf x}^1\| \leq h_{\mathcal{S}}\left(\min_{i\in\mathcal{S}}|x_i|\right) \cdot \|{\bf x}^0\| = h_{\mathcal{S}}\left(\min_{i\in\mathcal{S}}|x_i|\right) \cdot \|{\bf x}_\mathcal{S}^*\|
\end{align*}
which demonstrates the existence of at least $2^{|\mathcal{S}|}-2$ spurious local minima in formulation \eqref{eqn:obj-linear}.

\end{proof}

\subsection{Proof of Lemma \ref{lem:rankr-1}}
\begin{proof}
We follow a similar proof construction as in Theorem \ref{thm:spuriousmany}. Let $\mathcal{D}$ be the set of full-rank diagonal matrices and $\mathcal{D}_{\{1,-1\}}$ be the set of diagonal matrices with the diagonal entries being $+1$ or $-1$. Suppose that $\mathcal{S}$ is a maximal independent set of $\mathcal{G}_1$ in which every node has a self-loop. Then, we define the ground truth matrix $\mathbf{M^*} := \mathbf{X^*(X^*)}^T$, where $\mathbf{X}^* \in \mathbb{R}^{n\times r}$ satisfies
\[ \mathbf{X}^*_i = 0,\quad \forall i \not \in \mathcal{S},\quad  \mathbf{X}^*_i \in \mathcal{D},\quad \forall i \in \mathcal{S}, \]
where ${\bf X}_i$ is the $i$-th block of ${\bf X}\in\mathbb{R}^{n\times r}$; see the definition in the proof of Proposition \ref{prop:rankrpolynomial}.
Then, the set of global solutions is given by
\begin{align*} 
    \mathcal{X}^* := \Big\{ {\bf X}\in\mathbb{R}^{n\times r} ~|~  \mathbf{X}_i = \mathbf{X}^*_i\mathbf{D},~ \forall i \in \mathcal{S},~ \mathbf{D} \in \mathcal{D}_{\{1,-1\}}, \quad \mathbf{X}_i = 0,~ \forall i \not \in \mathcal{S} \Big\},
\end{align*}
For every global solution $\hat{{\bf X}} \in \mathcal{X}^*$, we have $\hat{{\bf X}}\hat{{\bf X}}^T = \mathbf{M}^*$. Thus, every global solution is a first-order critical point of the problem \eqref{eqn:obj-linear}. In addition, let $\Delta \in \mathbb{R}^{n \times r}$ be an arbitrary direction matrix with its $r \times r$ block matrices denoted as $\Delta_1, \Delta_2,\dots,\Delta_m$. Then, the quadratic variant of the Hessian \eqref{eqn:Hessian} in the direction $\Delta$ can be written as
\begin{align}
\Delta : \nabla^2 f(\hat{{\bf X}}) : \Delta &= 2\left\| \left( \hat{{\bf X}}\Delta^T + \Delta \hat{{\bf X}}^T \right)_\Omega \right\|_F^2 \label{eqn:pf2-4}
\\
& = \sum_{i \in \mathcal{S}} \|\Delta_i\hat{\bf X}_i^T + \hat{\bf X}_i\Delta_i^T\|_F^2  + 2 \sum_{j \not \in \mathcal{S}}  \left( \sum_{\substack{i \in \mathcal{S}\\ (i,j) \in \mathcal{E}_1}} \| \Delta_j\hat{\bf X}_i^T\|_F^2 + \sum_{\substack{i \in \mathcal{S}\\ (i,j) \in \mathcal{E}_2}} \| (\Delta_j\hat{\bf X}_i^T)_{nd}\|_F^2 \right)
\nonumber
\\
& + 2 \sum_{j \in \mathcal{S}} \sum_{\substack{i \in \mathcal{S} \\ (i,j) \in \mathcal{E}_2}}  \|(\Delta_j\hat{\bf X}_i^T + \hat{\bf X}_j\Delta_i^T)_{nd}\|_F^2  
\nonumber
\end{align}
where $(\cdot)_{nd}$ is the projection onto the matrix space with a zero diagonal. We note that the first term in \eqref{eqn:pf2-4} corresponds to self-loops in $\mathcal{G}_1$, while the second term corresponds to edges between $\mathcal{S}$ and $\mathcal{S}^c$. The edges whose endpoints are both in $\mathcal{S}^c$ do not contribute to the quadratic form. Moreover, the last term corresponds to partial observations with nondiagonal entries within the independent set $\mathcal{S}$.

Now, we aim to prove that the Hessian at $\hat{\bf X}$ is positive definite in the tangent space of $\mathcal{W}^{n\times r}$, namely,
\[ \Delta : \nabla^2 f(\hat{{\bf X}}) : \Delta > 0,\quad \forall \Delta\in\mathbb{R}^{n\times r}\backslash \{0\},\quad \Delta_1 \text{ is lower triangular}. \]
We assume that $\Delta : \nabla^2 f(\hat{{\bf X}}) : \Delta=0$ for some $\Delta\in\mathbb{R}^{n\times r}$ such that $\Delta_1$ is lower triangular. Under this assumption, all three terms in \eqref{eqn:pf2-4} are equal to zero. Considering the second term, since $\hat{\bf X}_i$ is full-rank, we have 
\[ \Delta_j = 0,\quad \forall j\notin\mathcal{S}. \]
For the first term, $\|\Delta_i\hat{\bf X}_i^T + \hat{\bf X}_i\Delta_i^T\|_F^2 $ is zero only if $\Delta_i\hat{\bf X}_i^T = - \hat{\bf X}_i\Delta_i^T$, i.e., $\Delta_i\hat{\bf X}_i^T$ is skew-symmetric. Since $\hat{\bf X}_i$ is a diagonal matrix with nonzero diagonal entries, the diagonal entries of $\Delta_i$ must be zero for all $i \in \mathcal{S}$. Without loss of generality, we assumed that vertex $1 \in \mathcal{S}$. This is because we can equivalently fix the block ${\bf X}_i$ to be lower-diagonal for any $i\in\mathcal{S}$ and consider a similarly constrained optimization problem. 
Then, since $\Delta_1$ must be lower triangular, we have
\[ \Delta_1 = 0, \]
We define the set
\[ \mathcal{S}_0 := \left\{ i\in\mathcal{S}~|~ \Delta_i = 0 \right\}. \]
We have shown that $i\in\mathcal{S}_0$ and aim to prove that $\mathcal{S}_0 = \mathcal{S}$. Since the induced subgraph $\mathcal{G}_2[\mathcal{S}]$ is connected, there exists a vertex $j \in \mathcal{S}$ such that $(1, j) \in \mathcal{E}_2$. Considering the third term in \eqref{eqn:pf2-4},
we have 
\[ (\Delta_j\hat{\bf X}_1^T + \hat{\bf X}_j\Delta_1^T)_{nd} = (\Delta_j\hat{\bf X}_1^T)_{nd} = 0, \]
which implies that $\Delta_j = 0$ because $\hat{\bf X}_j$ is a diagonal matrix with nonzero diagonal entries. Hence, we have proved that $j\in\mathcal{S}_0$.
%
By the connectivity of $\mathcal{G}_2[\mathcal{S}]$, we can inductively prove that all elements in $\mathcal{S}$ belong to $\mathcal{S}_0$. Therefore, it holds that $\Delta =0$ and the quadratic form of Hessian is zero only when $\Delta=0$. As a result, the Hessian is positive definite at every global solution $\hat{{\bf X}} \in \mathcal{X}^*$ of the problem \eqref{eqn:probres}.

Then, we perturb the ground truth of the above problem instance to be 
\[ {\bf M}^*(\epsilon) := {\bf X}^*(\epsilon) \left[{\bf X}^*(\epsilon)\right]^T = ({\bf X}^* + \epsilon)({\bf X}^* + \epsilon)^T, \]
where ${\bf X}^*(\epsilon) := {\bf X}^* + \epsilon$ and $\epsilon \in \mathbb{R}^{n \times r }$ is a small perturbation. Similar to Theorem \ref{thm:spuriousmany}, for a generic perturbation $\epsilon$, all block components of $\epsilon$ are nonzero and full-rank. Therefore, the problem $\mathcal{P}_{{\bf M}^*(\epsilon), \Omega,n,r}$ belongs to the class $\mathcal{L}(\mathcal{G}, n, r)$. This implies that the global solution of the problem \eqref{eqn:probres} is unique up to a right-multiplication with $\mathbf{D} \in \mathcal{D}_{\{1,-1\}}$. Since there are $2^{r|\mathcal{S}|}$ global minima for the unperturbed problem, IFT implies that there are $2^{r|\mathcal{S}|}$ strict local minima for the perturbed problem. Hence, there are $2^{r|\mathcal{S}|}-2^r$ spurious local minima for the perturbed problem.
\end{proof}

\subsection{Proof of Corollary \ref{cor:rsinglemissing}}

\begin{proof}

We first consider the largest lower bound on the number of spurious local minima. Similar to Corollary \ref{cor:singlemissing}, since an instance in $\mathcal{L}(\mathcal{G}, n, r)$ can have a maximum independent set of size at most $|\mathcal{S}(\mathcal{G}_1)| = m-1 = {n}/{r} - 1 $, Theorem \ref{thm:rspuriousmany} implies that the largest lower bound on the number of spurious local solution classes is $2^{r({n}/{r}-2)}-1 = 2^{n -2r} -1$. 

We prove the second part of this corollary next. We choose $\mathcal{G}_1$ to be a graph with $m$ vertices and $\genfrac(){0pt}{}{m}{2} - 1 $ edges, where $(i,j)$ is the only missing edge. Then, the maximal independent set is $\mathcal{S} := \{ i, j \}$. Since $\mathcal{G}_2[\mathcal{S}]$ must be connected, the nondiagonal entries of the block $\mathbf{M}^*_{i,j}$ are observed. Thus, only $2r$ entries are not observed and $|\Omega|=n^2 - 2r$. Furthermore, Theorem \ref{thm:rspuriousmany} implies that there exists a problem instance in $\mathcal{L}(\mathcal{G},n,r)$ with at least $2^{r(|\mathcal{S}|-1)}-1 = 2^r-1$ equivalent classes of spurious local minima.
\end{proof}

\subsection{Proof of Theorem \ref{thm:characterization}}

\begin{proof}
The proof is similar to those of Theorems \ref{thm:spuriousmany} and \ref{thm:rspuriousmany}. We can prove that the Hessian is positive definite at all global solutions for each loss function $g(\cdot)$ satisfying Assumption \ref{asp:general}.
\end{proof}

\section{PROOFS IN SECTION \ref{sec:more-obs}}

\subsection{Proof of Proposition \ref{prop:nospurious}}

\begin{proof}
For every matrix ${\mathbf{X}} \in \mathbb{R}^{n\times r}$, we denote ${\mathbf{X}}_i$ as the $i$-th $r\times r$ block of ${\bf X}$ for all $i\in[m]$. Because each block $\mathbf{M}^*_{i,j}$ is assumed to be full rank, the block $\mathbf{X}^*_i$ is also full rank for all $i\in[m]$, where ${\bf X}^*$ satisfies ${\bf M}^*={\bf X}^*({\bf X}^*)^T$. It is desirable show that every first-order critical point is either a global solution or a saddle point with a strict descent direction. For every $i\in[m]$, the gradient of the problem \eqref{eqn:obj-linear} with respect to the $i$-th block ${\bf X}_i$ is
\begin{equation*}
    \nabla_{\mathbf{X}_i}f(\mathbf{X})  = \begin{cases}
    2(\mathbf{X}_i\mathbf{X}_k^T - \mathbf{X}^*_i(\mathbf{X}^*_k)^T)\mathbf{X}_k , & \text{if }i \not = k\\
    \sum_{j=1}^{m}         2(\mathbf{X}_k\mathbf{X}_j^T - \mathbf{X}^*_k(\mathbf{X}^*_j)^T)\mathbf{X}_j  , & \text{if }i = k.
    \end{cases}
\end{equation*}
Let $\hat{\bf X}\in\mathbb{R}^{n\times r}$ be a first-order critical point of problem \eqref{eqn:obj-linear}. 

We first consider the case when $\hat{\mathbf{X}}_k$ is nonsingular. For every $i\in[m]\backslash\{k\}$, the condition $\nabla_{\mathbf{X}_i}f(\mathbf{X})=0$ implies that
\[ \hat{\mathbf{X}}_i\hat{\mathbf{X}}_k^T - \mathbf{X}^*_i(\mathbf{X}^*_k)^T = 0. \]
Substituting the above equations into the equation $\nabla_{\mathbf{X}_k}f(\mathbf{X}) = 0$, we obtain
\begin{align*}
    (\mathbf{X}_k\mathbf{X}_k^T - \mathbf{X}^*_k(\mathbf{X}^*_k)^T)\mathbf{X}_k = 0,
\end{align*}
which implies that $\hat{\mathbf{X}}_k\hat{\mathbf{X}}_k^T = \mathbf{X}^*_k(\mathbf{X}^*_k)^T$. Therefore, the matrix $\hat{\bf X}$ is a global solution of the problem \eqref{eqn:obj-linear} in this case.

Now, we consider the case when $\hat{\mathbf{X}}_k$ is singular. We choose a vector ${\bf y}_k\in\mathbb{R}^n$ such that
\[ \hat{\bf X}_k{\bf y}_k = 0,\quad \|{\bf y}_k\| = 1. \]
%
%
Given a small constant $\epsilon>0$, the $i$-th block direction $\Delta\in\mathbb{R}^{n\times r}$ is defined as
\begin{align*}
    \Delta_i := \begin{cases} {\bf z}_i {\bf y}_k^T, & \text{if } i\neq k\\
    \epsilon\mathbf{y}_k\mathbf{y}_k^T, & \text{if } i = k,\end{cases}
\end{align*}
where ${\bf z}_i\in\mathbb{R}^{n}$ is arbitrary. The above definition directly implies that $\hat{\mathbf{X}}_k\Delta_i^T = 0$ for all $i\in[m]$. Then, we obtain
\begin{equation*}
    \begin{aligned}
        4\left\langle (\hat{\mathbf{X}}\hat{\mathbf{X}}^T - \mathbf{X}^*({\bf X}^*)^T)_\Omega , \Delta\Delta^T \right\rangle  &= - 4\textbf{tr}\left[\mathbf{X}^*_k(\mathbf{X}^*_k)^T\Delta_k\Delta_k^T\right] - \sum_{j=1, j \not = i}^{m} 8 \textbf{tr}\left(\mathbf{X}^*_j(\mathbf{X}^*_k)^T\Delta_k\Delta_j^T\right) \\
        &= \sum_{j=1, j \not = i}^{m} -8\textbf{tr}\left[\mathbf{X}^*_j(\mathbf{X}^*_k)^T{\bf y}_k{\bf z}_j^T\right] \cdot \epsilon + \mathcal{O}(\epsilon^2),
    \end{aligned}
\end{equation*}
and
\begin{equation*}
    2\|( {{\bf X}}\Delta^T + \Delta {{\bf X}}^T )_\Omega\|_F^2 = 8\|{\bf X}_k\Delta_k^T\|_F^2 + 4 \sum_{j=1,j\neq k}^m\|{\bf X}_j\Delta_k^T\|_F^2 =  \mathcal{O}(\epsilon^2)
\end{equation*}
Combining two estimates above, the quadratic form of the Hessian \eqref{eqn:Hessian} can be written as
\[ \Delta : \nabla^2 f(\hat{\mathbf{X}}) : \Delta =  \sum_{j=1, j \not = i}^{m} -8\textbf{tr}\left[\mathbf{X}^*_j(\mathbf{X}^*_k)^T{\bf y}_k{\bf z}_j^T\right] \cdot \epsilon + \mathcal{O}(\epsilon^2). \]
Since ${\bf X}^*_i$ is nonsingular for all $i\in[m]$, it holds that $\mathbf{X}^*_j(\mathbf{X}^*_k)^T{\bf y}_k \neq 0$. Choosing 
\[ {\bf z}_j := \mathbf{X}^*_j(\mathbf{X}^*_k)^T{\bf y}_k, \]
we obtain
\[ \Delta : \nabla^2 f(\hat{\mathbf{X}}) : \Delta =  \sum_{j=1, j \not = i}^{m} -8\|\mathbf{X}^*_j(\mathbf{X}^*_k)^T{\bf y}_k\|^2 \cdot \epsilon + \mathcal{O}(\epsilon^2). \]
Hence, the quadratic form of the Hessian is negative with a sufficiently small $\epsilon$ and $\hat{\bf X}$ is a strict saddle point. 

Combining the two cases, we conclude that every second-order critical point of the problem \eqref{eqn:obj-linear} is a global minimum. 

\end{proof}

\section{PROOFS IN SECTION \ref{sec:gd}}

\subsection{Proof of Lemma \ref{lem:gd-1}}


In this proof and the following proofs for Section \ref{sec:gd}, we consider the instance of the MC problem constructed in Section \ref{sec:exp-sol}. For completeness, we repeat the instance here. In the unperturbed case, the ground truth matrix is defined as ${\bf M}^* := {\bf x}^*({\bf x}^*)^T\in\mathbb{R}^{n\times n}$, where vector ${\bf x}^*\in\mathbb{R}^{n}$ satisfies
\[ {\bf x}^*_{2k-1} = 1,\quad \forall k=1,\dots, \lceil n/2 \rceil,\quad {\bf x}^*_{2k} = 0,\quad \forall k=1,\dots, \lfloor n / 2\rfloor. \]
The measurement set $\Omega$ is given by
\[ \Omega := \left\{ (j,j), (2k,j), (j,2k) ~|~ j=1,\dots,n,~ k=1,\dots, \lfloor n/2\rfloor  \right\}. \]
It has been proved in Section \ref{sec:exp-sol} that the problem \eqref{eqn:obj-general} has $2^{\lceil n/2 \rceil}$ global solutions, which are given by the set
\[ \mathcal{X}^* := \left\{ {\bf x}\in \mathbb{R}^n ~|~ {\bf x}_{2k} = 0,~k=1,\dots, \lfloor n/2\rfloor,~ {\bf x}_{2k+1}^2 = 1,~ k=1,\dots, \lceil n/2 \rceil \right\}. \]
For each vector ${\bf x}\in\mathbb{R}^n$, we denote 
\[ {\bf x}^{o} := (x_1,x_3,\dots,x_{\lceil n/2 \rceil}),\quad {\bf x}^{e} := (x_2,x_4,\dots,x_{\lfloor n/2 \rfloor}). \]
Before presenting the proof of Lemma \ref{lem:gd-1}, we first state three technical lemmas.
\begin{lemma}\label{lem:pf5-1}
Suppose that Assumption 1 holds with $(\delta, 1)$, and that $\hat{{\bf x}}$ is a first-order critical point of the problem \eqref{eqn:obj-general}. Then, it holds that
\[ \|\hat{{\bf x}}^e\|  \| \hat{{\bf x}} \| \leq 2\sqrt{2}\delta \| \left({\bf M} - {\bf M}^*\right)_\Omega\|_F, \]
where we define ${\bf M} := {\bf \hat{x}\hat{x}}^T$.
\end{lemma}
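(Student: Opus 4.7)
The plan is to combine the first-order stationarity condition with a polarization of the sparse RIP applied to a carefully chosen rank-one test direction. Set $\mathbf{E} := \mathbf{M}-\mathbf{M}^* = \hat{\mathbf{x}}\hat{\mathbf{x}}^T - \mathbf{x}^*(\mathbf{x}^*)^T$, which is symmetric of rank at most two. Writing the gradient as derived earlier in the appendix, the stationarity condition $\nabla f(\hat{\mathbf{x}})=0$ becomes $(\nabla g(\mathbf{E}_\Omega))_\Omega\,\hat{\mathbf{x}}=0$. Define $\mathbf{y}^e\in\mathbb{R}^n$ to be the vector agreeing with $\hat{\mathbf{x}}$ on even-indexed coordinates and vanishing on odd-indexed ones. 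Because every row of $\Omega$ indexed by an even integer is fully observed (from the unperturbed pattern described at the beginning of Section \ref{sec:gd}), the rank-one matrix $\mathbf{y}^e\hat{\mathbf{x}}^T$ has support entirely inside $\Omega$, i.e.\ $(\mathbf{y}^e\hat{\mathbf{x}}^T)_\Omega = \mathbf{y}^e\hat{\mathbf{x}}^T$. Contracting the stationarity condition with $(\mathbf{y}^e)^T$ from the left and using self-adjointness of the $\Omega$-projection yields $\langle \nabla g(\mathbf{E}_\Omega), \mathbf{y}^e\hat{\mathbf{x}}^T\rangle = 0$.

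A direct computation identifies the quantity I want to control. Because $x^*_i = 0$ for every even $i$, the entries $E_{ij}$ with $i$ even reduce to $\hat{x}_i\hat{x}_j$ and the $i$-th row sits entirely in $\Omega$, so
\[ \langle \mathbf{E}_\Omega, \mathbf{y}^e\hat{\mathbf{x}}^T\rangle = \sum_{i\text{ even}}\sum_{j=1}^{n} \hat{x}_i^2 \hat{x}_j^2 = \|\hat{\mathbf{x}}^e\|^2\,\|\hat{\mathbf{x}}\|^2. \]
Combined with $\|\mathbf{y}^e\hat{\mathbf{x}}^T\|_F = \|\hat{\mathbf{x}}^e\|\,\|\hat{\mathbf{x}}\|$, the lemma therefore reduces to the approximation
\[ \big|\langle \nabla g(\mathbf{E}_\Omega) - \mathbf{E}_\Omega,\,\mathbf{y}^e\hat{\mathbf{x}}^T\rangle\big| \le 2\sqrt{2}\,\delta\,\|\mathbf{E}_\Omega\|_F\,\|\mathbf{y}^e\hat{\mathbf{x}}^T\|_F, \]
after which dividing by $\|\hat{\mathbf{x}}^e\|\,\|\hat{\mathbf{x}}\|$ finishes the proof.

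To establish this approximation I would Taylor-expand $\nabla g$ between $0$ and $\mathbf{E}_\Omega$, using $\nabla g(0)=0$ (implicit in the setup, since $0$ minimizes $g$ by Assumption \ref{asp:general}):
\[ \nabla g(\mathbf{E}_\Omega) = \int_0^1 \nabla^2 g(s\mathbf{E}_\Omega)[\mathbf{E}_\Omega]\,ds, \]
so the task becomes bounding the bilinear form $\mathbf{E}_\Omega : \nabla^2 g(s\mathbf{E}_\Omega) : \mathbf{y}^e\hat{\mathbf{x}}^T$ uniformly in $s$. The main obstacle is that the sparse $(\delta,1)$-RIP only controls the Hessian along rank-$\leq 2$ directions, whereas a naive sum of the rank-two $\mathbf{E}$ with the rank-one $\mathbf{y}^e\hat{\mathbf{x}}^T$ has rank three, outside the reach of the assumption. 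The crucial algebraic observation that rescues the argument is
\[ \mathbf{E} \pm \alpha\,\mathbf{y}^e\hat{\mathbf{x}}^T = (\hat{\mathbf{x}}\pm\alpha\,\mathbf{y}^e)\hat{\mathbf{x}}^T - \mathbf{x}^*(\mathbf{x}^*)^T,\qquad \alpha\in\mathbb{R}, \]
which keeps $\mathbf{E}\pm\alpha\,\mathbf{y}^e\hat{\mathbf{x}}^T$ of rank at most two for every scalar $\alpha$. I can therefore apply the quadratic RIP at the two matrices $\mathbf{E}\pm\alpha\,\mathbf{y}^e\hat{\mathbf{x}}^T$, use the parallelogram identity, and optimize over $\alpha$ to obtain
\[ \big|\mathbf{E} : \nabla^2 g(s\mathbf{E}_\Omega) : \mathbf{y}^e\hat{\mathbf{x}}^T - \langle \mathbf{E}_\Omega, \mathbf{y}^e\hat{\mathbf{x}}^T\rangle\big| \le 2\sqrt{2}\,\delta\,\|\mathbf{E}_\Omega\|_F\,\|\mathbf{y}^e\hat{\mathbf{x}}^T\|_F, \]
where the $2\sqrt{2}$ factor is a standard consequence of the parallelogram-based polarization.

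Finally, because $\mathbf{y}^e\hat{\mathbf{x}}^T$ is $\Omega$-supported and the Hessian in the sparse-RIP setting acts on its inputs through their $\Omega$-projections, the identity $\mathbf{E}_\Omega : \nabla^2 g(s\mathbf{E}_\Omega) : \mathbf{y}^e\hat{\mathbf{x}}^T = \mathbf{E} : \nabla^2 g(s\mathbf{E}_\Omega) : \mathbf{y}^e\hat{\mathbf{x}}^T$ holds for every $s$. Integrating the polarized bound over $s\in[0,1]$ and substituting into the stationarity identity $\langle \nabla g(\mathbf{E}_\Omega),\mathbf{y}^e\hat{\mathbf{x}}^T\rangle = 0$ produces $\|\hat{\mathbf{x}}^e\|^2\|\hat{\mathbf{x}}\|^2 \le 2\sqrt{2}\,\delta\,\|\mathbf{E}_\Omega\|_F\,\|\hat{\mathbf{x}}^e\|\,\|\hat{\mathbf{x}}\|$, and dividing by $\|\hat{\mathbf{x}}^e\|\,\|\hat{\mathbf{x}}\|$ (with the inequality trivial when this product vanishes) yields the stated bound.
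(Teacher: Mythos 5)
Your proposal is correct and follows the same overall skeleton as the paper's proof: contract the stationarity condition with the ``even-part'' rank-one direction ($\Delta_{2k}=\hat x_{2k}$, $\Delta_{2k+1}=0$), Taylor-expand $\nabla g$ from $0$ using Assumption~\ref{asp:general}, bound the deviation of the Hessian bilinear form from $\langle E_\Omega, \cdot\rangle$ by $2\sqrt{2}\,\delta\|E_\Omega\|_F\|\cdot\|_F$, and finish with the explicit computation $\langle E_\Omega, \mathbf{y}^e\hat{\mathbf{x}}^T\rangle=\|\hat{\mathbf{x}}^e\|^2\|\hat{\mathbf{x}}\|^2$. The one place where you genuinely diverge is the justification of the deviation bound: the paper imports it as a black box from the proof of Theorem~1 in \cite{bi2020global}, which handles a general (rank-$\leq 2r$) $\times$ (rank-one) pairing and is where the constant $2\sqrt 2$ originates, whereas you prove it from scratch by observing that $E\pm\alpha\,\mathbf{y}^e\hat{\mathbf{x}}^T=(\hat{\mathbf{x}}\pm\alpha\,\mathbf{y}^e)\hat{\mathbf{x}}^T-\mathbf{x}^*(\mathbf{x}^*)^T$ stays rank-$\leq 2$, so the sparse RIP applies directly to the polarizing directions. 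This structural observation is specific to the present instance but makes the argument self-contained, and your optimized parallelogram bound actually yields the sharper constant $\delta$ rather than $2\sqrt{2}\delta$ (so attributing $2\sqrt 2$ to the polarization itself is slightly off, though harmless since the lemma only asserts the weaker bound). Both your write-up and the paper's are equally loose about the symmetrization of the test direction $\hat{\mathbf{x}}\Delta^T$ versus $\Delta\hat{\mathbf{x}}^T$ in the gradient of $g\circ\mathcal{A}_\Omega$; this does not affect the conclusion.
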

\begin{proof}

Utilizing the first-order optimality condition and the gradient in \eqref{eqn:gradient}, it holds that
\begin{align*}
    \left \langle \nabla g\left[ \left( \hat{{\bf x}}\hat{{\bf x}}^T - {\bf x}^*{\bf x}^* \right)_\Omega \right], \hat{{\bf x}}\Delta^T \right\rangle = \int_0^1 \left( {\bf M} - {\bf M}^* \right) : \nabla^2g\left[({{\bf M}^*})_\Omega+t\left( {\bf M} - {\bf M}^* \right)_\Omega\right] : \hat{{\bf x}}\Delta^T ~dt = 0, \quad \forall \Delta \in \mathbb{R}^n,
\end{align*}
where the first equality is from Taylor's expansion. For every fixed number $t\in[0,1]$, the proof of Theorem 1 in \cite{bi2020global} implies that
\begin{align*}
    &\left( {\bf M} - {\bf M}^* \right) : \nabla^2g\left[{{\bf M}^*}_\Omega+t\left( {\bf M} - {\bf M}^* \right)_\Omega\right] : \hat{{\bf x}}\Delta^T \\
    &\hspace{16em}\geq \left\langle \left( {\bf M} - {\bf M}^* \right)_\Omega, \left( \hat{{\bf x}}\Delta^T \right)_\Omega \right\rangle - 2\sqrt{2}\delta \| \left( {\bf M} - {\bf M}^* \right)_\Omega\|_F \| \left( \hat{{\bf x}}\Delta^T \right)_\Omega\|_F .
\end{align*}
Integrating over $t$, it follows that
\begin{align}\label{eqn:pf5-1}
    \left\langle \left( {\bf M} - {\bf M}^* \right)_\Omega, \left( \hat{{\bf x}}\Delta^T \right)_\Omega \right\rangle \leq 2\sqrt{2}\delta \| \left( {\bf M} - {\bf M}^* \right)_\Omega\|_F \| \left( \hat{{\bf x}}\Delta^T \right)_\Omega\|_F.
\end{align}
By choosing
\[ \Delta_{2k+1} = 0,\quad k=1,\dots, \lceil n/2 \rceil,\quad \Delta_{2k}=\hat{x}_{2k},\quad k=1,\dots, \lfloor n/2 \rfloor, \]
we obtain
\begin{align*}
    \left\langle \left( {\bf M} - {\bf M}^* \right)_\Omega, \left( \hat{{\bf x}}\Delta^T \right)_\Omega \right\rangle = \| \hat{{\bf x}}^e \|^2 \| \hat{{\bf x}}\|^2,\quad \|\left( \hat{{\bf x}}\Delta^T \right)_\Omega\|_F = \| \hat{{\bf x}}^e \| \| \hat{{\bf x}}\|.
\end{align*}
Substituting the above two equalities into \eqref{eqn:pf5-1}, we have
\[ \|\hat{{\bf x}}^e\|^2  \| \hat{{\bf x}} \|^2 \leq 2\sqrt{2}\delta \| \left({\bf M} - {\bf M}^*\right)_\Omega\|_F \|\hat{{\bf x}}^e\| \| \hat{{\bf x}} \|. \]
The above inequality implies that
\[ \|\hat{{\bf x}}^e\|  \| \hat{{\bf x}} \| \leq 2\sqrt{2}\delta \| \left({\bf M} - {\bf M}^*\right)_\Omega\|_F \quad \text{or}\quad \hat{{\bf x}}^e = 0, \]
since $\|\hat{{\bf x}}^e\| \| \hat{{\bf x}} \|=0$ if and only if $\hat{{\bf x}}^e=0$. In both cases, the claim of this lemma holds.
\end{proof}

\begin{lemma}\label{lem:pf5-3}
Let $\mathcal{D}$ be the set of $r \times r$ diagonal matrices and $\mathcal{D}_{\{1,-1\}}$ be set of $r \times r$ diagonal matrices with the diagonal entries $+1$ or $-1$. Under the same setting as Lemma \ref{lem:rankr-1}, consider the $n \times n$ ground truth matrix $\mathbf{M^*} := \mathbf{X}^*(\mathbf{X}^*)^T$ such that $n = mr $, where
\[ \mathbf{X}_i^* \in \mathcal{D},\quad \forall i\in\mathcal{S}(\mathcal{G}_1),\quad \mathbf{X}_i^* = \mathbf{0},\quad \forall i \notin \mathcal{S}(\mathcal{G}_1). \]
Let $\mathbf{D} \in \mathbb{R}^{n \times n}$ be an arbitrary matrix with its diagonal blocks denoted as $\mathbf{D}_1, \mathbf{D}_2, \dots, \mathbf{D}_{m}$ such that $\mathbf{D}_i \in \mathcal{D}_{\{1,-1\}}$ for all  $i \in [m]$. Then, the problem instance $\mathcal{P}_{{\bf M}^*, \Omega(\mathcal{G}), n, r}$ in formulation \eqref{eqn:obj-linear} satisfies that
\[ \nabla f(\mathbf{DX})=\mathbf{D}\nabla f(\mathbf{X}), \quad \forall \mathbf{X} \in \mathbb{R}^{n \times r}. \]
\end{lemma}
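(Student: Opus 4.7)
The plan is to unfold the gradient formula \eqref{eqn:gradient} at the argument $\mathbf{D}\mathbf{X}$ and systematically shuffle the $\mathbf{D}$ factors to the front. Since each $\mathbf{D}_i \in \mathcal{D}_{\{1,-1\}}$ is diagonal, the full matrix $\mathbf{D}$ is itself a diagonal $\pm 1$ matrix, so $\mathbf{D}^T = \mathbf{D}$ and $\mathbf{D}^2 = \mathbf{I}$. Expanding gives
\begin{align*}
\nabla f(\mathbf{D}\mathbf{X}) = 2(\mathbf{D}\mathbf{X}\mathbf{X}^T\mathbf{D} - \mathbf{M}^*)_\Omega \mathbf{D}\mathbf{X}.
\end{align*}
The first routine fact I will use is that $(\cdot)_\Omega$ commutes with left and right multiplication by any diagonal matrix, since it acts entrywise while $\mathbf{D}\mathbf{A}\mathbf{D}$ merely rescales the $(p,q)$ entry of $\mathbf{A}$ by the scalar $\mathbf{D}_{pp}\mathbf{D}_{qq}$. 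Therefore $(\mathbf{D}\mathbf{X}\mathbf{X}^T\mathbf{D})_\Omega = \mathbf{D}(\mathbf{X}\mathbf{X}^T)_\Omega\mathbf{D}$.

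The main step, and the one that requires care, is showing that $(\mathbf{M}^*)_\Omega$ is itself a diagonal matrix, after which $\mathbf{D}$ and $(\mathbf{M}^*)_\Omega$ commute trivially. I will establish this through three structural observations: (i) the block $\mathbf{M}^*_{ij} = \mathbf{X}^*_i(\mathbf{X}^*_j)^T$ vanishes unless both $i,j \in \mathcal{S}(\mathcal{G}_1)$, because $\mathbf{X}^*_i = \mathbf{0}$ for $i \notin \mathcal{S}(\mathcal{G}_1)$; (ii) when $i,j \in \mathcal{S}(\mathcal{G}_1)$, the block $\mathbf{M}^*_{ij}$ is the product of two diagonal matrices in $\mathcal{D}$ and is therefore diagonal; (iii) since $\mathcal{S}(\mathcal{G}_1)$ is an independent set in $\mathcal{G}_1$, no edge in $\mathcal{E}_1$ joins two distinct vertices of $\mathcal{S}(\mathcal{G}_1)$, and although such a pair $(i,j)$ may belong to $\mathcal{E}_2$, the corresponding off-diagonal observations of $\mathbf{M}^*_{ij}$ are already zero by (ii). Combined with the fact that every $i \in \mathcal{S}(\mathcal{G}_1)$ carries a self-loop in $\mathcal{E}_1$ and hence the diagonal block $\mathbf{M}^*_{ii}$ (itself diagonal) is observed in full, these observations imply that the only surviving nonzero entries of $(\mathbf{M}^*)_\Omega$ lie on its main diagonal.

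With both $\mathbf{D}$ and $(\mathbf{M}^*)_\Omega$ diagonal, I can now assemble
\begin{align*}
\nabla f(\mathbf{D}\mathbf{X}) &= 2\big[\mathbf{D}(\mathbf{X}\mathbf{X}^T)_\Omega\mathbf{D} - (\mathbf{M}^*)_\Omega\big]\mathbf{D}\mathbf{X}\\
&= 2\mathbf{D}(\mathbf{X}\mathbf{X}^T)_\Omega\mathbf{D}^2\mathbf{X} - 2(\mathbf{M}^*)_\Omega\mathbf{D}\mathbf{X},
\end{align*}
and then invoke $\mathbf{D}^2 = \mathbf{I}$ together with the commutation $(\mathbf{M}^*)_\Omega\mathbf{D} = \mathbf{D}(\mathbf{M}^*)_\Omega$ to rewrite the right-hand side as $2\mathbf{D}\bigl[(\mathbf{X}\mathbf{X}^T)_\Omega - (\mathbf{M}^*)_\Omega\bigr]\mathbf{X} = \mathbf{D}\nabla f(\mathbf{X})$. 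The principal obstacle is the verification of diagonality of $(\mathbf{M}^*)_\Omega$; this is precisely where the construction of the ground truth in Lemma \ref{lem:rankr-1} (with $\mathbf{X}^*_i \in \mathcal{D}$ on $\mathcal{S}$ and $\mathbf{X}^*_i = 0$ off $\mathcal{S}$), combined with the independent-set property, is indispensable.
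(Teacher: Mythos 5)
Your proof is correct, and it takes a cleaner, more global route than the paper's. The paper expands the gradient block by block according to the graph structure ($\mathcal{E}_1$ full-block terms, $\mathcal{E}_2$ off-diagonal terms, self-loops) and pulls each $\mathbf{D}_i$ out of each term separately, using the commutativity of diagonal matrices and the fact that the off-diagonal projection $(\cdot)_{nd}$ commutes with diagonal conjugation. You instead work with the single matrix identity $\nabla f(\mathbf{X}) = 2(\mathbf{X}\mathbf{X}^T - \mathbf{M}^*)_\Omega\mathbf{X}$ and reduce everything to two facts: that $(\cdot)_\Omega$ commutes with conjugation by the (globally diagonal, $\pm1$) matrix $\mathbf{D}$, and that $(\mathbf{M}^*)_\Omega$ is itself diagonal. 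Your verification of the latter is complete and correct: blocks with an index outside $\mathcal{S}(\mathcal{G}_1)$ vanish, blocks indexed by $\mathcal{S}(\mathcal{G}_1)\times\mathcal{S}(\mathcal{G}_1)$ are diagonal, the independence of $\mathcal{S}(\mathcal{G}_1)$ rules out full-block observations of distinct pairs in $\mathcal{S}(\mathcal{G}_1)$, the $\mathcal{E}_2$ observations of such pairs pick out only the (zero) off-diagonal entries, and $n = mr$ leaves no residual fully-observed border. What your packaging buys is brevity and the isolation of the one structural fact that makes the symmetry work — all observed entries of the ground truth sit on the main diagonal — whereas the paper's block-wise computation is longer but exposes exactly which graph-theoretic feature ($\mathcal{E}_1$ versus $\mathcal{E}_2$ versus self-loops) is responsible for each cancellation. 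Both arguments rest on the same underlying structure, so there is no gap to report.
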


\begin{proof}
Since the graph $\mathcal{G}$ satisfies the conditions in Lemma \ref{lem:rankr-1}, the gradient $\nabla f(\mathbf{X})$ for problem \eqref{eqn:obj-linear} can be written as
\begin{equation*}
\nabla_{\mathbf{X}_k}f(\mathbf{DX})  = 2\begin{cases}
 (\mathbf{X}_k\mathbf{X}_k^T - \mathbf{X}_k^*(\mathbf{X}_k^*)^T)\mathbf{X}_k + \sum_{(k,j) \in \mathcal{E}_1} \mathbf{X}_k\mathbf{X}_j^T\mathbf{X}_j + \sum_{(k,j) \in \mathcal{E}_2} (\mathbf{X}_k\mathbf{X}_j^T - \mathbf{X}_k^*(\mathbf{X}_j^*)^T)_{nd}\mathbf{X}_j  , & \text{if }  k \in \mathcal{S}\\
\sum_{(k,j) \in \mathcal{E}_1} \mathbf{X}_k\mathbf{X}_j^T\mathbf{X}_j + \sum_{(k,j) \in \mathcal{E}_2} (\mathbf{X}_k\mathbf{X}_j^T)_{nd}\mathbf{X}_j  , & \text{if }k \not \in \mathcal{S},
\end{cases}
\end{equation*}
where $(\cdot)_{nd}$ is the projection onto the matrix space with the zero diagonals. Note that blocks of the transformed variable $\mathbf{DX}$ are $\mathbf{D}_i\mathbf{X}_i$ for all $i \in [m]$. First, we consider the change in the $i$-th block of the gradient function for $i \in \mathcal{S}$:
\begin{align}\label{eqn:pf5-7}
    \nabla_{\mathbf{X}_i}f(\mathbf{DX}) & =
 2  \Big( (\mathbf{D}_i\mathbf{X_i}\mathbf{X}_i^T\mathbf{D}_i^T - \mathbf{X}_i^*(\mathbf{X}_i^*)^T)\mathbf{D}_i\mathbf{X}_i + \sum_{(i,j) \in \mathcal{E}_1} \mathbf{D}_i\mathbf{X}_i\mathbf{X}_j^T\mathbf{D}_j^T\mathbf{D}_j\mathbf{X}_j + \\
 \nonumber& \sum_{(i,j) \in \mathcal{E}_2} (\mathbf{D}_i\mathbf{X}_i\mathbf{X}_j^T\mathbf{D_j}^T - \mathbf{X}_i^*(\mathbf{X}_j^*)^T)_{nd}\mathbf{D}_j\mathbf{X}_j  \Big).
\end{align}
Using $\mathbf{D}_i^T\mathbf{D}_i =\mathbf{I}$ and $\mathbf{X}^*_i \in \mathcal{D}$, the first term in \eqref{eqn:pf5-7} can be written as
\begin{align*}
    (\mathbf{D}_i\mathbf{X_i}\mathbf{X}_i^T\mathbf{D}_i^T - \mathbf{X}_i^*(\mathbf{X}_i^*)^T)\mathbf{D}_i\mathbf{X}_i & = \mathbf{D}_i\mathbf{X_i}\mathbf{X}_i^T\mathbf{X}_i - \mathbf{X}_i^*(\mathbf{X}_i^*)^T \mathbf{D}_i\mathbf{X}_i \\
    & = \mathbf{D}_i(\mathbf{X_i}\mathbf{X}_i^T - \mathbf{X}_i^*(\mathbf{X}_i^*)^T)\mathbf{X}_i,
\end{align*}
where the second equality is justified by the commutative property of diagonal matrix multiplication. Similarly, the second term in \eqref{eqn:pf5-7} can be written as
\[  \sum_{(i,j) \in \mathcal{E}_1} \mathbf{D}_i\mathbf{X}_i\mathbf{X}_j^T\mathbf{D}_j^T\mathbf{D}_j\mathbf{X}_j =  \mathbf{D}_i \sum_{(i,j) \in  \mathcal{E}_1} \mathbf{X}_i\mathbf{X}_j^T\mathbf{X}_j. \]
For the last term in \eqref{eqn:pf5-7}, we use the relation $\mathbf{X}^*_i \in \mathcal{D}$ and the fact that $(\cdot)_{nd}$ is nonzero only at positions associated with the nondiagonal entries to obtain
\begin{align*}
\sum_{(i,j) \in \mathcal{E}_2} (\mathbf{D}_i\mathbf{X}_i\mathbf{X}_j^T\mathbf{D_j}^T - \mathbf{X}_i^*(\mathbf{X}_j^*)^T)_{nd}\mathbf{D}_j\mathbf{X}_j & = \sum_{(i,j) \in \mathcal{E}_2} \mathbf{D}_i(\mathbf{X}_i\mathbf{X}_j^T - \mathbf{X}_i^*(\mathbf{X}_j^*)^T)_{nd}\mathbf{D}_j^T\mathbf{D}_j\mathbf{X}_j \\
& = \mathbf{D}_i \sum_{(i,j) \in \mathcal{E}_2} (\mathbf{X}_i\mathbf{X}_j^T - \mathbf{X}_i^*(\mathbf{X}_j^*)^T)_{nd}\mathbf{X}_j.
\end{align*}
Thus, we have
\[ \nabla_{\mathbf{X}_i}f(\mathbf{DX}) = \mathbf{D}_i\nabla_{\mathbf{X}_i}f(\mathbf{X}), \quad \forall i \in \mathcal{S}. \]
Now, we consider the change in the $i$-th block of the gradient function for $i \not \in \mathcal{S}$:
\begin{align*}
   \nabla_{\mathbf{X}_i}f(\mathbf{DX}) & =
2 \left( \sum_{(i,j) \in \mathcal{E}_1}      \mathbf{D}_i\mathbf{X}_i\mathbf{X}_j^T\mathbf{D}_j^T\mathbf{D}_j\mathbf{X}_j + \sum_{(i,j) \in \mathcal{E}_2} (\mathbf{D}_i\mathbf{X}_i\mathbf{X}_j^T\mathbf{D}_j^T)_{nd}\mathbf{D}_j\mathbf{X}_j  \right) \\
& = 2 \mathbf{D}_i\left( \sum_{(i,j) \in \mathcal{E}_1}      \mathbf{X}_i\mathbf{X}_j^T\mathbf{X}_j + \sum_{(i,j) \in \mathcal{E}_2} (\mathbf{X}_i\mathbf{X}_j^T)_{nd}\mathbf{X}_j  \right)  = \mathbf{D}_i\nabla_{\mathbf{X}_i}f(\mathbf{X}),
\end{align*}
where the second equality holds by a similar argument as in the case of $i\in\mathcal{S}$. 
%
Consequently, we have
\[ \nabla_{\mathbf{X}_i}f(\mathbf{DX}) = \mathbf{D}_i\nabla_{\mathbf{X}_i}f(\mathbf{X}), \quad \forall i \not \in \mathcal{S}. \]
Combining the two cases, it follows that
\[ \nabla f(\mathbf{DX}) = \mathbf{D}\nabla f(\mathbf{X}). \]

\end{proof}


\begin{lemma}\label{lem:pf5-2}
Consider the case $r=1$. Given an arbitrary point $x_0 \in \mathbb{R}^n$, let $\hat{x} \in \mathbb{R}^n$ denote a point with the property that the gradient flow \eqref{eqn:grad-flow} initialized at ${\bf x}_0$ converges to $\hat{\bf x}$. For every diagonal matrix ${\bf D}\in\mathbb{R}^{n\times n}$ that satisfies
\[ {\bf D}_{ii}^2 = 1,\quad \forall i\in[n], \]
the gradient flow initialized at ${\bf D x}_0$ will converge to ${\bf D}\hat{\bf x}$.
\end{lemma}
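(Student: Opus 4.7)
The strategy is to exploit a symmetry inherent in the specific unperturbed instance of Section \ref{sec:gd}: the objective $f(x) = g((xx^T - M^*)_\Omega)$ is invariant under $x \mapsto Dx$, which forces the gradient flow to be equivariant, so the flow from $Dx_0$ is just the image of the flow from $x_0$.

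The first step is to verify the structural identity
\[
\bigl((Dx)(Dx)^T - M^*\bigr)_\Omega \;=\; D\,(xx^T - M^*)_\Omega\,D,\qquad \forall x\in\mathbb{R}^n,
\]
for every diagonal matrix $D$ with $D_{ii}^2 = 1$. For diagonal indices $(i,i)\in\Omega$, the factor $D_{ii}^2=1$ handles both terms at once. For the off-diagonal observed entries of the measurement set $\Omega = \{(j,j),(2k,j),(j,2k)\}$, at least one index is even; since $x^*_{2k}=0$, the cross term $x^*_i x^*_j$ vanishes and the remaining $x_ix_j$ is simply multiplied by $D_{ii}D_{jj}$. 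Combined with the symmetry $g(DYD)=g(Y)$ of the loss (which holds for the $\ell_2$-loss, and more generally for any loss satisfying the sign-conjugation invariance implicit in Assumption \ref{asp:gd-1} for the instances considered here), this gives $f(Dx)=f(x)$. Differentiating this identity and using $D^{-1}=D$ yields the rank-$1$ analogue of Lemma \ref{lem:pf5-3}:
\[
\nabla f(Dx) \;=\; D\,\nabla f(x),\qquad \forall x\in\mathbb{R}^n.
\]

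The second step is the ODE symmetry argument. Let $x(t)$ denote the solution of \eqref{eqn:grad-flow} with $x(0)=x_0$, and by hypothesis $x(t)\to\hat x$. Set $y(t) := Dx(t)$, which satisfies $y(0)=Dx_0$. Then
\[
\dot y(t) \;=\; D\,\dot x(t) \;=\; -D\,\nabla f(x(t)) \;=\; -\nabla f(Dx(t)) \;=\; -\nabla f(y(t)),
\]
so $y(\cdot)$ solves the Cauchy problem for the gradient flow \eqref{eqn:grad-flow} with initial condition $Dx_0$. Since $\nabla f$ is locally Lipschitz, the Picard--Lindel\"of theorem gives uniqueness of solutions, so $y(t)$ is \emph{the} gradient flow starting from $Dx_0$. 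Finally, the continuity of left-multiplication by $D$ delivers
\[
\lim_{t\to\infty} y(t) \;=\; D\,\lim_{t\to\infty} x(t) \;=\; D\hat x,
\]
which is the desired conclusion.

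The only delicate point is the gradient equivariance in the first step: the structural identity on $\Omega$ follows directly from the zero pattern of $x^*$, but to promote it to $\nabla f(Dx)=D\nabla f(x)$ we need the loss $g$ to be invariant under the conjugation $Y\mapsto DYD$. This is immediate for the $\ell_2$-loss (indeed $\|DYD\|_F^2=\|Y\|_F^2$) and is the rank-$1$ specialization of Lemma \ref{lem:pf5-3}; for a general loss meeting Assumption \ref{asp:gd-1} in our framework, the same invariance can be verified from the zero pattern of $x^*$ combined with the diagonality of the observed blocks, which is why this lemma is tailored to the unperturbed ground truth of Section \ref{sec:gd}. Every remaining step is a routine application of ODE uniqueness and continuity.
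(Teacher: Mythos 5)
Your proof is correct and takes essentially the same route as the paper: establish the sign-flip equivariance $\nabla f(\mathbf{Dx})=\mathbf{D}\nabla f(\mathbf{x})$ and then invoke uniqueness of the gradient-flow ODE to conclude that the trajectory from $\mathbf{Dx}_0$ is $\mathbf{D}\mathbf{x}(t)$. The only cosmetic difference is that the paper obtains the equivariance by the direct block-wise gradient computation of Lemma \ref{lem:pf5-3} (stated for the $\ell_2$-loss formulation \eqref{eqn:obj-linear}), whereas you derive it from the invariance $f(\mathbf{Dx})=f(\mathbf{x})$ and differentiation; note only that the conjugation invariance $g(\mathbf{DYD})=g(\mathbf{Y})$ is a property of $g$ itself rather than something implied by Assumption \ref{asp:gd-1}, so, like the paper, your argument is really pinned to the $\ell_2$-type loss.
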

\begin{proof}


By the results of Lemma \ref{lem:pf5-3}, we obtain
\begin{align}\label{eqn:pf5-6}
    \nabla f({\bf Dx}) = {\bf D} \nabla f({\bf x}),\quad \forall {\bf x} \in\mathbb{R}^n.
\end{align}
%

Hence, we know that the gradient flow initialized with ${\bf Dx}_0$ is equal to ${\bf Dx}(t)$ at time $t$, for all $t\geq 0$. This leads to the conclusion that the new gradient flow will converge to ${\bf D\hat{x}}$. 
\end{proof}
%

\begin{proof}[Proof of Lemma \ref{lem:gd-1}]
Since it is already known that the problem \eqref{eqn:obj-general} has exponentially many global solutions, it remains to prove that the gradient flow with a radial random initialization will converge to one of the above global solutions with equal probability, i.e., with probability $2^{-\lceil n/2 \rceil}$. It has been proved in \cite{lee2016gradient} that the gradient flow will only converge to local minima if the objective function does not have degenerate saddle points, i.e., the Hessian of every saddle point has a negative curvature. Since the global solutions of the problem \eqref{eqn:obj-general} are symmetric with respect to a radial probability distribution, it follows from Lemma 5 that we only need to prove that the objective function of this problem does not have degenerate saddle points. Equivalently, we prove that all second-order critical points are global minima. 

Suppose that $\hat{{\bf x}}$ is a second-order critical point of the problem \eqref{eqn:obj-general} that is not a global minimizer. Let ${\bf M} := {\bf xx}^T$. Due to the symmetry of the landscape, we can assume without loss of generality that
\[ \hat{x}_k \geq 0,\quad k=1,\dots,n. \]
We define the direction $\Delta\in\mathbb{R}^n$ as
\[ \Delta_{2k+1} = \hat{x}_{2k+1} - 1,\quad k=1,\dots,\lceil n/2 \rceil,\quad \Delta_{2k} = \hat{x}_{2k},\quad k=1,\dots, \lfloor n/2 \rfloor. \]
Then, Lemma 7 in \cite{ge2017no} implies that
\begin{align*} 
\Delta : \nabla^2 f\left[ \hat{{\bf x}} \right] : \Delta=&\Delta\Delta^T : \nabla^2 g\left[ \left( \hat{{\bf x}}\hat{{\bf x}}^T - {\bf x}^*{\bf x}^* \right)_\Omega \right] : \Delta\Delta^T - 3({\bf M} - {\bf M}^*) : \nabla^2 g\left[ \left( \hat{{\bf x}}\hat{{\bf x}}^T - {\bf x}^*{\bf x}^* \right)_\Omega \right] : ({\bf M} - {\bf M}^*)\\
\leq& (1+\delta) \|\left(\Delta\Delta^T\right)_\Omega\|_F^2 - 3(1-\delta) \| \left({\bf M} - {\bf M}^*\right)_\Omega\|_F^2,
\end{align*}
where the last inequality is from the assumption that $g(\cdot)$ satisfies the sparse RIP condition with $(\delta,1)$. Combining with the second-order necessary optimality condition, we obtain
\begin{align}\label{eqn:pf5-2}
    (1+\delta) \|\left(\Delta\Delta^T\right)_\Omega\|_F^2 \geq 3(1-\delta) \| \left({\bf M} - {\bf M}^*\right)_\Omega\|_F^2.
\end{align}
Using the expression
\[ \| \left({\bf M} - {\bf M}^*\right)_\Omega\|_F^2 - \|\left(\Delta\Delta^T\right)_\Omega\|_F^2 = \sum_{k=1}^{\lceil n/2\rceil}\left[4\hat{x}_{2k+1}\cdot(\hat{x}_{2k+1}-1)^2 + \|\hat{{\bf x}}^e\|^2 (2\hat{x}_{2k+1} - 1) \right], \]
the inequality \eqref{eqn:pf5-2} can be written as
\begin{align} 
\label{eqn:pf5-3}
-\sum_{k=1}^{\lceil n/2\rceil}\left[4\hat{x}_{2k+1}\cdot(\hat{x}_{2k+1}-1)^2 + \|\hat{{\bf x}}^e\|^2 (2\hat{x}_{2k+1} - 1) \right] \geq  \frac{2-4\delta}{1+\delta} \| \left({\bf M} - {\bf M}^*\right)_\Omega\|_F^2.
\end{align}
The above inequality gives that
\begin{align*}
    \frac{2-4\delta}{1+\delta} \| \left({\bf M} - {\bf M}^*\right)_\Omega\|_F^2 &\leq -\sum_{k=1}^{\lceil n/2\rceil}\left[4\hat{x}_{2k+1}\cdot(\hat{x}_{2k+1}-1)^2 + \|\hat{{\bf x}}^e\|^2 (2\hat{x}_{2k+1} - 1) \right]\\
    &\leq -\sum_{k=1}^{\lceil n/2\rceil}\left[0 - \|\hat{{\bf x}}^e\|^2 \right] = \lceil n/2 \rceil \cdot \|\hat{{\bf x}}^e\|^2 \leq (n+1)/2 \cdot \|\hat{{\bf x}}^e\|^2 \\
    &\leq  (n+1)/2 \cdot \|\hat{{\bf x}}^e\|\|\hat{{\bf x}}\| \leq \sqrt{2}(n+1)\delta \| \left({\bf M} - {\bf M}^*\right)_\Omega\|_F,
\end{align*}
where the second inequality is from the assumption that $\hat{x}_{2k+1} \geq 0$ and the second last inequality is from Lemma \ref{lem:pf5-1}. The above inequality implies that
\[ \| \left({\bf M} - {\bf M}^*\right)_\Omega\|_F \leq \frac{\sqrt{2}(n+1)\delta(1+\delta)}{2-4\delta}. \]
Recalling the condition
\[ n\geq3,\quad \delta \leq \frac{1}{2n}, \]
we obtain
\[ \| \left({\bf M} - {\bf M}^*\right)_\Omega\|_F \leq \frac{1}{2}. \]
Checking the diagonal entries of ${\bf M} - {\bf M}^*=\hat{{\bf x}}\hat{{\bf x}}^T - {\bf x}^*({\bf x}^*)^T$, we have
\[ | \hat{x}_{2k+1} - 1 | \leq \frac12,\quad k=1,\dots,\lceil n/2\rceil, \]
which gives
\[ \hat{x}_{2k+1} \geq \frac12,\quad k=1,\dots,\lceil n/2\rceil. \]
Applying this condition to inequality \eqref{eqn:pf5-3}, the left-hand side of the inequality is non-positive while the right-hand side is non-negative, which implies that
\[ \| \left({\bf M} - {\bf M}^*\right)_\Omega\|_F = 0. \]
This contradicts the assumption that $\hat{{\bf x}}$ is not a global solution. Hence, we have completed the proof that all second-order critical points of the problem \eqref{eqn:obj-general} are global minima. 

Furthermore, using Lemma \ref{lem:pf5-2}, we know that the region of attraction (ROA) of each global minimum is symmetrical. Since the randomly initialized gradient flow converges to a second-order critical point with probability $1$ and all second-order critical points are global minima, the gradient flow with a radial random initialization will converge to each global minimum with equal probability.
\end{proof}

\subsection{Proof of Theorem \ref{thm:gd-1}}

\begin{proof}

In the unperturbed case, the curvature of the Hessian at each global minimum is given by
\[ \Delta : \nabla^2 f({\bf x}) : \Delta \geq (1-\delta)\|\left({\bf x}\Delta^T + \Delta {\bf x}^T\right)_\Omega\|_F^2,\quad \forall \Delta \in \mathbb{R}^{n},~{\bf x}\in\mathcal{X}^*, \]
which has been proved to be positive in Section \ref{sec:exp-sol}. Therefore, we know that the Hessian at each global solution is positive definite and global minima are asymptotically stable for the gradient flow. We choose $R > 0$ to be a large enough constant such that
\[ \mathbb{P}\left[ \|{\bf x}_0\| \leq R \right] \geq 1 - 2^{-\lceil n/2 \rceil}, \]
where the probability is chosen with respect to the initialization distribution. We consider the level set
\[ \mathcal{L}_R := \{ {\bf x}\in\mathbb{R}^{n}~|~ f({\bf x}) \leq c_R \}, \]
where $c_R := \max\{ f({\bf x})~|~ \|{\bf x}\| \leq R \}$. Since the function $f({\bf x})$ is continuous and coercive, the level set $\mathcal{L}_R$ is compact and the gradient flow will not leave $\mathcal{L}_R$ if it is initialized inside it. In addition, it holds that
\[ \mathbb{P}\left[ {\bf x}_0 \in \mathcal{L}_R \right] \geq 1 - 2^{-\lceil n/2 \rceil}. \]
Conditioning on the event that ${\bf x}_0 \in \mathcal{L}_R$, Lemma \ref{lem:gd-1} implies that the gradient flow will converge to each global minimum with the same probability. Let $\hat{{\bf x}}\in\mathcal{X}^*$ be an arbitrary global minimum and $\mathcal{R}_{\hat{{\bf x}}}$ be its ROA of the gradient flow on $\mathcal{L}_{R}$. Therefore, Lemma \ref{lem:gd-1} implies that
\[ \mathbb{P}\left[ {\bf x}_0 \in \mathcal{R}_{\hat{{\bf x}}} ~|~ {\bf x}_0 \in \mathcal{L}_R \right] = 2^{-\lceil n/2\rceil}. \]
By Theorem 4.17 in \cite{Khalil:1173048}, there exist a smooth positive definite function $V({\bf x})$ and a continuous positive definite function $W({\bf x})$ such that every level set of $V({\bf x})$ is compact and
\begin{align*}
    V({\bf x}) \rightarrow +\infty,\quad &\forall {\bf x} \rightarrow \partial\mathcal{R}_{\hat{{\bf x}}},\\
    \left\langle \frac{d V({\bf x})}{d{\bf x}}, -\nabla_{\bf x} f({\bf x}) \right\rangle \leq -W({\bf x}),\quad &\forall {\bf x} \in \mathcal{R}_{\hat{{\bf x}}},
\end{align*}
where ${\bf x} \rightarrow \partial\mathcal{R}_{\hat{{\bf x}}}$ means that the distance between ${\bf x}$ and $\partial\mathcal{R}_{\hat{{\bf x}}}$ goes to zero, and $\partial\mathcal{R}_{\hat{{\bf x}}}$ denotes the boundary of the region of attraction of the solution $\hat{\bf x}$. We choose a large enough constant $M$ such that
\[ \mathbb{P}\left[ {\bf x}_0 \in \mathcal{V}_M  ~|~ {\bf x}_0 \in \mathcal{R}_{\hat{{\bf x}}} \right] \geq 1- 2^{-\lceil n/2\rceil}, \]
where we define the level set $\mathcal{V}_M := \{{\bf x}\in\mathbb{R}^n~|~V({\bf x}) \leq M\}$. Since the level set $\mathcal{V}_M$ is compact, there exists a small enough constant $\epsilon_0$ such that
\[ W({\bf x}) \geq \epsilon_0,\quad \forall {\bf x} \in \mathcal{V}_M. \]

Now, we consider the perturbed case. We denote the new objective function as $\tilde{f}({\bf x}; \eta)$, where $\eta\in\mathbb{R}$ is the perturbation to the global solution. More explicitly, the objective function is defined as
\[ \tilde{f}({\bf x}; \eta) := \left\| \left( {\bf xx}^T - ({\bf x}^*+\eta)({\bf x}^*+\eta)^T \right)_\Omega \right\|_F^2. \]
It has been proved in Section \ref{sec:exp-sol} that the global minimum of the perturbed problem is unique up to a sign flip if the perturbation is sufficiently small and generic, and that there exist $2^{\lceil n/2\rceil} - 2$ spurious local minima. Since the gradient of $\tilde{f}({\bf x}; \eta)$ is a uniformly continuous function of $\eta$ on the compact set $\mathcal{V}_M$, there exists a small enough $r>0$ such that for any generic $\eta_0$ satisfying $\|\eta_0\|\leq r$, it holds that
\[ \left\langle \frac{d V({\bf x})}{d{\bf x}}, -\nabla \tilde{f}({\bf x};\eta_0) \right\rangle \leq -\epsilon_0/2 < 0,\quad \forall {\bf x} \in \mathcal{V}_M.  \]
This implies that the gradient flow on the perturbed problem will not leave $\mathcal{V}_M$ and will converge to a local minimum inside $\mathcal{V}_M$ if ${\bf x}_0 \in \mathcal{V}_M$. Therefore, if the initial point ${\bf x}_0$ is initialized with the given distribution, we have
\[ \mathbb{P}\left[ \lim_{t\rightarrow +\infty} {\bf x}(t) \in \mathcal{V}_M \right] \geq 2^{-\lceil n/2\rceil}\left(1 - 2^{-\lceil n/2\rceil}\right)^2 \geq 2^{-\lceil n/2\rceil}\left(1 - 2^{-\lceil n/2\rceil + 1}\right). \]
By choosing $r$ to be the minimum over all points $\hat{{\bf x}}\in\mathcal{X}^*$, the gradient flow on the perturbed problem will converge to a spurious minimum with probability at least
\[ \left(2^{\lceil n/2\rceil} - 2\right) \cdot 2^{-\lceil n/2\rceil}\left(1 - 2^{-\lceil n/2\rceil + 1}\right) = 1 - \mathcal{O}\left( 2^{-\lceil n/2\rceil} \right). \]
Thus, we can conclude that the gradient flow on the perturbed problem will fail with probability at least $1-\mathcal{O}(2^{-\lceil n/2\rceil})$.

\end{proof}

\end{document}